\declaretheorem[style=theorem,name={Theorem}]{theoremletter}
\newtheorem{introcorollary}[theoremletter]{Corollary}
\theoremstyle{plain}
\newtheorem{theorem}{Theorem}[section]
\newtheorem*{thm*}{Theorem}
\newtheorem{prop}[theorem]{Proposition}
\newtheorem{lemma}[theorem]{Lemma}
\newtheorem{cor}[theorem]{Corollary}
\theoremstyle{definition}
\newtheorem{dfn}[theorem]{Definition}
\theoremstyle{remark} 
\newtheorem{example}[theorem]{Example}
\newtheorem{remark}[theorem]{Remark}
\theoremstyle{plain}
\numberwithin{equation}{section}
\newcommand{\alpheqn}[1][\relax]{
     \refstepcounter{equation}
     \if#1\relax \relax
       \else \label{#1}
     \fi  
     \setcounter{saveeqn}{\value{equation}}%
    \setcounter{equation}{0}%
    \renewcommand{\theequation}{\thealphequation}}
\newcommand{\reseteqn}{\setcounter{equation}{\value{saveeqn}}%
     \renewcommand{\theequation}{\thearabicequation}}
\providecommand{\mathscr}{\mathcal} 
\newcommand{\tens}{\otimes}
\newcommand{\varps}{{\varepsilon}}
\newcommand{\vertiii}[1]{{\left\vert\kern-0.25ex\left\vert\kern-0.25ex\left\vert #1 
    \right\vert\kern-0.25ex\right\vert\kern-0.25ex\right\vert}}
\newcommand{\Bvert}[1]{{\Big\vert\kern-0.25ex\Big\vert\kern-0.25ex\Big\vert #1 
    \Big\vert\kern-0.25ex\Big\vert\kern-0.25ex\Big\vert}}
\newcommand{\bvert}[1]{{\big\vert\kern-0.25ex\big\vert\kern-0.25ex\big\vert #1 
    \big\vert\kern-0.25ex\big\vert\kern-0.25ex\big\vert}}
\newcommand{\nvert}[1]{{\vert\kern-0.25ex\vert\kern-0.25ex\vert #1 
    \vert\kern-0.25ex\vert\kern-0.25ex\vert}}
\renewcommand{\leq}{\leqslant}
\renewcommand{\geq}{\geqslant}
\newcommand{\op}{\operatorname{op}} 
\newcommand{\nn}{\mathbb{N}}
\newcommand{\cc}{\mathbb{C}}
\newcommand{\C}[1]{\mathcal{#1}}
\newcommand{\T}[1]{\textup{#1}}
\newcommand{\inn}[1]{\left\langle #1 \right\rangle}
\newcommand{\red}{{\operatorname{red}}}
\newcommand{\A}{{\mathcal{A}}}
\newcommand{\BB}{{\mathbb{B}}}
\newcommand{\CC}{{\mathbb{C}}}
\newcommand{\NN}{{\mathbb{N}}}
\renewcommand{\S}{{\mathcal{S}}}
\newcommand{\GG}{{\mathbb{G}}}
\newcommand{\Pol}{\operatorname{Pol}}
\newcommand{\Irred}{\operatorname{Irred}}
\newcommand{\id}{{\operatorname{id}}}
\newcommand{\X}{{\mathcal{X}}}
\newcommand{\bbGamma}{{\mathpalette\makebbGamma\relax}}
\newcommand{\makebbGamma}[2]{%
  \raisebox{\depth}{\scalebox{1}[-1]{$\mathsurround=0pt#1\mathbb{L}$}}%
}
\begin{document}

\author{Are Austad}
\address{Are Austad, Department of Mathematics, University of Oslo, P.O. Box 1053 Blindern, N-0316 Oslo, Norway}
\email{areaus@math.uio.no}

\author{David Kyed}
\address{David Kyed, Department of Mathematics and Computer Science, University of Southern Denmark, Campusvej 55, DK-5230 Odense M, Denmark}
\email{dkyed@imada.sdu.dk}

\subjclass[2020]{58B34, 58B32, 46L89} 

\keywords{Quantum metric spaces, quantum groups, spectral triples, noncommutative geometry} 

\title{Quantum metrics from length functions on quantum groups}

\begin{abstract}
We study the quantum metric structure arising from length functions on quantum groups and show that for coamenable quantum groups of Kac type, the quantum metric information is captured by the algebra of central functions. Using this, we provide the first examples of length functions on (genuine) quantum groups which give rise to compact quantum metric spaces.

\end{abstract}

\maketitle

\section{Introduction}
Building on the classical Gelfand correspondence between compact Hausdorff spaces and commutative unital $C^*$-algebras, numerous non-commutative analogues of classical point set based theories have emerged over the past 50 years. Prominent examples include the theory of quantum groups \cite{KlSc:QGR}, Connes' noncommutative geometry \cite{Con:NCG} and Rieffel's theory of compact quantum metric spaces \cite{Rie:CQM}, the latter of which is the primary object of study in the present paper. As the name suggests, Rieffel's theory is a non-commutative extension of the theory of compact metric spaces, and has provided a mathematically rigorous framework within which heuristic statements from physics can be proven rigorously. A  prime example of this phenomenon is Rieffel's seminal result that matrix algebras (fuzzy spheres) converge to the 2-sphere \cite{Rie:MSG}.  \\

More formally, a compact quantum metric space consists of an operator system $\X$ equipped with a seminorm $L\colon \X \to [0,\infty)$ such that the associated  \emph{Monge-Kantorovi\v{c} metric}
\[
d_L(\mu,\nu):=\sup\big\{|\mu(x)-\nu(x)| \mid L(x)\leq 1\big\} \qquad (\mu,\nu \in \S(\X))
\]
metrizes 
the weak$^*$ topology on the state space $\S(\X)$; see Section \ref{sec:cqms} for the precise requirements and examples. \\
The theory of compact quantum metric spaces is closely linked to  Connes' non-commutative (differential) geometry by means of the following construction: Given a spectral triple $(\A,H,D)$, one obtains a seminorm $L_D$ on $\A$ by setting $L_D(a):=\| [D,a]\|$, and this construction often (but not always) yields interesting examples of compact quantum metric spaces \cite{OzawaRieffel2005, ChristRieffel2017, AgKa:PSM, Christensen-Ivan:spectral-triples}. It is generally a  non-trivial task to verify if a given spectral triple gives rise to a compact quantum metric space, and even some of the most fundamental examples in the theory are not fully understood. As an example, if $\Gamma$ is a discrete group equipped with a proper length function $\ell\colon \Gamma \to [0,\infty)$, one obtains a natural spectral triple $(\CC\Gamma, \ell^2(\Gamma), D_\ell)$ (see Section \ref{sec:cqms} for details) and the only classes of groups for which it is known that $L_\ell:=L_{D_\ell}$ provides a compact quantum metric structure, are hyperbolic groups \cite{OzawaRieffel2005} and groups  of polynomial growth \cite{ChristRieffel2017}. In both cases, the argument is rather involved, and except for minor improvements \cite{LongWu:twisted}, no new examples have been obtained since Christ and Rieffel's paper \cite{ChristRieffel2017} from 2017.  {It therefore seems natural to attempt to expand the number of examples by allowing for more flexibility in the defining data, as was done for instance in \cite{GerontogiannisMesland2025} and \cite{Antonescu-Christensen}, and the present paper suggests a new direction to this strategy by extending the domain from groups to quantum groups. }\\

The structure of a compact quantum group is encoded in a unital $C^*$-algebra $A$ endowed with a $*$-homomorphism $\Delta\colon A\to A\tens A$ satisfying certain axioms (spelled out in Section \ref{sec:qgrps}) and one may view $A$ in two ways: Firstly, $A$ can be thought of as a quantum analogue of the continuous functions on a compact topological group, in which case it is often denoted $C(\GG)$ to reflect this point of view. Secondly, one may view $A$ as a generalisation of a reduced group $C^*$-algebra, in which case it is more naturally denoted $C^*_{\red}( \bbGamma)$ and one thinks of $ \bbGamma$ as the discrete dual quantum group of $\GG$. Although we will be generalising constructions from the class of group $C^*$-algebras, we have opted for the $C(\GG)$-notation to conform with most of the literature upon which we base our analysis. In addition to the $C^*$-algebraic picture, a compact quantum group also gives rise to a Hopf $*$-algebra $\Pol(\GG)$ as well as a von Neumann algebraic quantum group $L^\infty(\GG)$, which both capture all the structure present.   In the quantum group setup, a length function is defined as a function $\ell$ on the set $\Irred(\GG)$ of equivalence classes of irreducible corepresentations, satisfying a natural set of axioms (see Definition \ref{def:length-function}). From such a length function, one obtains a spectral triple in  the same manner as for discrete groups, but to the best of our knowledge, there are no known genuine quantum examples (i.e.~beyond finite-dimensional  quantum groups and reduced $C^*$-algebras of {certain} classical discrete groups)  where the associated commutator-seminorm {$L_\ell$} yields a compact quantum metric space.  The aim of the present paper is to provide a criterion ensuring that this is the case and to utilise it to construct the first natural quantum group examples. 
More precisely, we show that under natural, albeit somewhat restrictive, assumptions the algebra of central functions
\[
C_z(\GG):=\{a\in C(\GG) \mid \sigma \Delta(a)=\Delta(a)\}
\]
is able to detect when $(C(\GG),L_\ell)$ is a compact quantum metric space. Here $\sigma$ denotes the flip automorphism on $C(\GG) \tens C(\GG)$.

\begin{theoremletter}\label{introthm:lifting}
If $\GG$ is a compact, coamenable quantum group of Kac type and $\ell\colon \Irred(\GG) \to [0,\infty)$ is a proper length function, then $(C(\GG), L_\ell)$ is a compact quantum metric space if and only if $(C_z(\GG), L_\ell)$ is a compact quantum metric space.
\end{theoremletter}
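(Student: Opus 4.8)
The plan is to verify, in each direction, Rieffel's total-boundedness criterion (see \cite{Rie:CQM}): a Lip-norm $L$ on an operator system $\X$ metrises the weak$^*$ topology on $\S(\X)$ exactly when the image of the Lip-ball $\{x : L(x)\leq 1\}$ in $\X/\CC 1$ is totally bounded. The organising tool is the Haar-averaged adjoint action, which yields a conditional expectation $E\colon C(\GG)\to C_z(\GG)$; because we are in the Kac case this $E$ is a trace-preserving, norm-contractive projection onto the central functions, so in particular $h\circ E=h$. Its decisive property is that it does not increase $L_\ell$: the adjoint action is implemented on $L^2(\GG)$ by unitaries $U_g$ that preserve each Peter--Weyl block $\mathrm{span}\{u^\alpha_{ij}\}$, while $D_\ell$ acts as the scalar $\ell(\alpha)$ on that block; hence $U_g$ commutes with $D_\ell$, and averaging $[D_\ell,U_g a U_g^*]=U_g[D_\ell,a]U_g^*$ over $h$ gives $L_\ell(E(a))\leq L_\ell(a)$ for every $a$.

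The forward implication is then immediate and uses neither $E$ nor coamenability. The inclusion $C_z(\GG)\hookrightarrow C(\GG)$ is an inclusion of operator systems with $C_z(\GG)\cap\CC 1=\CC 1$, and the quotient norms agree on $C_z(\GG)/\CC 1$, so this map is isometric onto its image. Thus $\{a\in C_z(\GG):L_\ell(a)\leq 1\}/\CC 1$ embeds isometrically into $\{a\in C(\GG):L_\ell(a)\leq 1\}/\CC 1$; if the latter is totally bounded so is the former, and since $L_\ell$ still annihilates only the scalars on $C_z(\GG)$, the pair $(C_z(\GG),L_\ell)$ is a compact quantum metric space.

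For the converse I would split each $a$ with $L_\ell(a)\leq 1$ as $a=E(a)+(a-E(a))$. The central summand satisfies $L_\ell(E(a))\leq 1$, hence lies in the Lip-ball of $C_z(\GG)$, which is totally bounded modulo scalars by hypothesis, and $h(a-E(a))=0$. Since $L_\ell(a-E(a))\leq L_\ell(a)+L_\ell(E(a))\leq 2$ and $\ker E\cap\CC 1=0$, the entire problem reduces to proving that the non-central Lip-ball
\[
\{\, b\in\ker E \;:\; L_\ell(b)\leq 2 \,\}
\]
is totally bounded in norm. Granting this, a finite $\varepsilon/2$-net for the central part together with one for the non-central part assemble into a net for the full Lip-ball modulo scalars, so $(C(\GG),L_\ell)$ is a compact quantum metric space; the same total-boundedness statement also forces $L_\ell$ to be faithful on $\ker E$, so that $L_\ell$ annihilates only scalars on all of $C(\GG)$. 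Note that this makes the equivalence transparent: the non-central Lip-ball is totally bounded unconditionally, so the full space is a compact quantum metric space precisely when the central one is.

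The heart of the argument, and the step I expect to be the main obstacle, is exactly this unconditional total boundedness of the non-central Lip-ball, where properness of $\ell$ and coamenability enter together. Properness makes each sublevel set $\{\alpha\in\Irred(\GG):\ell(\alpha)\leq R\}$ finite, so the spectral projection onto the corresponding blocks has finite-dimensional range; the trouble is that this projection is not norm-contractive on $C(\GG)$, so one cannot truncate naively. Coamenability repairs this: amenability of the discrete dual furnishes a net of central, completely positive, finitely supported Fourier multipliers $\phi_n$ with $\|\phi_n\|_{\mathrm{cb}}\leq 1$ converging to $\id$, which act as Fej\'er-type smoothings commuting with $E$. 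The two estimates one then needs are a uniform diameter bound $\|b\|\leq C$ on the non-central Lip-ball, and a uniform tail estimate $\sup_{L_\ell(b)\leq 2}\|b-\phi_n(b)\|\to 0$ as $n\to\infty$; combined with the finite-dimensionality of each $\phi_n(\ker E)$ these give total boundedness by the standard approximation-by-totally-bounded-sets argument. Establishing the tail estimate is the crux: it is an equicontinuity statement in which the length function must be shown to control the rate at which the Fej\'er smoothings converge on $\ker E$, and it is precisely here that the interplay between $\ell$, the Kac trace, and the completely bounded central multipliers supplied by coamenability has to be exploited.
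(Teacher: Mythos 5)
Your forward implication is fine and agrees with the (implicit, easy) direction in the paper, and your converse starts with the same two ingredients the paper uses: the $h$-preserving conditional expectation $E$ with $L_\ell(E(a))\leq L_\ell(a)$ (Proposition \ref{prop:cond-exp-contraction}) and the finitely supported central ucp multipliers supplied by coamenability --- indeed your $\phi_n$ coincide with the paper's maps $\beta_{\chi_n}=(\id\tens\chi_n)\Delta$ built from the central states $\chi_n$ of Lemma \ref{lem:foelner-states}. The gap is exactly the step you yourself flag as the crux and then leave unproved: the claim that $\{b\in\ker E \mid L_\ell(b)\leq 2\}$ is totally bounded \emph{unconditionally}, via a uniform diameter bound $\|b\|\leq C$ and a uniform tail estimate $\sup_b\|b-\phi_n(b)\|\to 0$ on $\ker E$. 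Neither estimate is established, and no mechanism is given by which properness and coamenability alone would produce them. Note that in your scheme the hypothesis that $(C_z(\GG),L_\ell)$ is a compact quantum metric space is used only for the central summand $E(a)$, so you are in effect asserting a strictly stronger theorem than the one to be proved, namely that any failure of $(C(\GG),L_\ell)$ to be a compact quantum metric space can only ever come from the central part; the paper neither proves nor claims this, and its truth is unclear.

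What the paper does instead --- and this is the idea missing from your proposal --- is to let the hypothesis on $C_z(\GG)$ control the \emph{rate} of approximation on the whole algebra, rather than seeking an unconditional estimate on $\ker E$. Left-invariance of $L_\ell$ (Lemma \ref{lem:left-inv}) yields $\|a-\beta_{\chi_n}(a)\|\leq d_\ell(\epsilon,\chi_n)\cdot L_\ell(a)$ for every $a\in\Pol(\GG)$ (Lemma \ref{lem:berezin-approximation}), and the key observation (Lemma \ref{lem:restriction}) is that for \emph{central} states, which satisfy $\varphi\circ E=\varphi$ by Lemma \ref{lem:centrality-in-terms-of-E}, the Monge--Kantorovi\v{c} distance can be computed entirely inside $C_z(\GG)$: $d_\ell(\epsilon,\chi_n)$ equals the distance $d_\ell^z$ between the restrictions of $\epsilon$ and $\chi_n$ to $C_z(\GG)$, precisely because the $L_\ell$-contraction $E$ maps the full Lip-ball into the central one. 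The assumption that $(C_z(\GG),L_\ell)$ is a compact quantum metric space is then what makes these numbers finite and convergent to $0$ (and, taking $\chi_0=h$, what gives finite diameter), after which Kaad's criterion (Theorem \ref{thm:kaad-criterion}) concludes; no equicontinuity on $\ker E$ is ever needed. A secondary point: your justification that $E$ is $L_\ell$-contractive by averaging over unitaries $U_g$ only makes literal sense for classical groups, since a genuine quantum group has no group elements; the correct substitute is the conjugation coaction $\delta(T)=Z^*(1\tens T)Z$ with $Z=W\Sigma V\Sigma$, together with the commutation of $\id\tens\mathcal{D}_\ell$ with $Z$ (Lemma \ref{lem:commutation-lem}), and the Kac assumption is needed already for $E$ to map $C(\GG)$ into $C_z(\GG)$ (Proposition \ref{prop:cond-ext-prop}).
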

Here the notion of coamenability is the quantum analogue of amenability of a discrete group, while the Kac type assumption can be viewed as unimodularity of the discrete dual quantum group  --- an assumption that is of course vacuous in the setting of discrete groups where unimodularity is automatic; see Section \ref{sec:qgrps} for more details. \\
The proof of Theorem \ref{introthm:lifting} builds on the approximation techniques developed in \cite{AKK:Podcon}, the main novelty being the utilization of a sequence of \emph{central} functionals approximating the counit. The existence of such a sequence is ensured by the coamenability assumption, while the Kac type assumption provides a conditional expectation $E\colon C(\GG) \to C_z(\GG)$  which, in turn, ensures that the Monge-Kantorovi\v{c} distance between two central states can be detected by their restriction to the algebra of central functions (see Lemma \ref{lem:restriction}). \\
 It is well known that $C_z(\GG)$ and its Hopf-algebraic counterpart, $\Pol_z(\GG)$, are generally more robust  than the quantum group itself. As an example, for Woronowicz'  famous deformation $SU_q(2)$ of the Lie group $SU(2)$, the algebra of central functions $C_z(SU_q(2))$ is independent of $q$  and, in particular, isomorphic to the classical algebra of class functions $C_z(SU(2))$ on $SU(2)$. 
 The polynomial algebra of central functions $\Pol_z(\GG)$   is $*$-isomorphic to the so-called fusion algebra $F(\GG)$, which is constructed as the free vector space with basis $\Irred(\GG)$ and algebra structure induced by direct sum and tensor product of corepresentations and involution induced by conjugation. A length function $\ell$ on $\GG$ also gives rise to a spectral triple based on $F(\GG)$, and we show  in Corollary \ref{cor:fusion-vs-central} that if the associated commutator seminorm 
  yields a compact quantum metric structure on $F(\GG)$, then so does the restriction of the original seminorm $L_\ell$ to $C_z(\GG)$. From this and Theorem \ref{introthm:lifting}  we get the following stability result:

\begin{introcorollary}\label{cor:equivalent-fusion}
Let  $\GG_1$ and $\GG_2$ be compact quantum groups with length functions $\ell_1$ and $\ell_2$ and assume that  $\ell_1$ defines a compact quantum metric structure on $F(\GG_1)$. If there exists a bijection $\alpha\colon \Irred(\GG_1) \to \Irred(\GG_2)$ which intertwines the length functions and extends to a $*$-isomorphism $ F(\GG_1) \simeq F(\GG_2)$, and if $\GG_2$ is coamenable and of Kac type,  then  $\ell_2$ yields a compact quantum metric structure on both $F(\GG_2)$ and $C(\GG_2)$.
\end{introcorollary}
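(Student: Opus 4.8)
The plan is to reduce the statement to a concatenation of the two transfer results already in hand, Corollary~\ref{cor:fusion-vs-central} and Theorem~\ref{introthm:lifting}, the only genuinely new ingredient being the transport of the compact quantum metric structure across the given isomorphism of fusion algebras. Write $\Phi\colon F(\GG_1)\to F(\GG_2)$ for the $*$-isomorphism extending $\alpha$, so that $\Phi$ carries the basis vector indexed by $x\in\Irred(\GG_1)$ to the basis vector indexed by $\alpha(x)\in\Irred(\GG_2)$. The first goal is to show that $\ell_2$ already defines a compact quantum metric structure on $F(\GG_2)$.

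To this end I would first record that the spectral triple based on $F(\GG_i)$ is reconstructible from the pair $(F(\GG_i),\ell_i)$ alone: the Hilbert space is $\ell^2(\Irred(\GG_i))$ with $\Irred(\GG_i)$ as orthonormal basis, the fusion algebra acts by its left regular representation $\pi_i$, and the Dirac operator $D_{\ell_i}$ is the diagonal operator $\delta_x\mapsto \ell_i(x)\delta_x$. Since $\Phi$ permutes the distinguished bases, it induces a unitary $U\colon \ell^2(\Irred(\GG_1))\to \ell^2(\Irred(\GG_2))$, $\delta_x\mapsto \delta_{\alpha(x)}$. The intertwining hypothesis $\ell_2\circ\alpha=\ell_1$ then gives $U D_{\ell_1}=D_{\ell_2}U$, while multiplicativity of $\Phi$ together with the basis-preservation gives $U\pi_1(a)U^{*}=\pi_2(\Phi(a))$. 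Consequently the two spectral triples are unitarily equivalent, $\Phi$ is isometric for the associated $C^{*}$-norms (so it extends to the completions) and the commutator seminorms satisfy $L_{\ell_2}(\Phi(a))=L_{\ell_1}(a)$. Because the compact quantum metric space property is invariant under an isometric $*$-isomorphism of the underlying operator systems intertwining the Lipschitz seminorms—state spaces correspond as weak$^{*}$ spaces and the Monge-Kantorovi\v{c} metrics match—the hypothesis on $\GG_1$ transports to the conclusion that $\ell_2$ yields a compact quantum metric structure on $F(\GG_2)$.

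With the fusion-algebra statement secured for $\GG_2$, I would apply Corollary~\ref{cor:fusion-vs-central} to $\GG_2$ to obtain that the restriction of $L_{\ell_2}$ to $C_z(\GG_2)$ is a compact quantum metric structure. Finally, since $\alpha$ is a length-intertwining bijection it restricts to a bijection between the length-balls of $\ell_1$ and $\ell_2$, so properness of $\ell_2$ is equivalent to properness of $\ell_1$; and $\ell_1$ is proper, this being forced by (or built into) the compact quantum metric structure on $F(\GG_1)$, as finiteness of each length ball is exactly what makes $D_{\ell_1}$ have compact resolvent. Thus $\ell_2$ is a proper length function, $\GG_2$ is coamenable of Kac type by assumption, and the ``only if'' direction of Theorem~\ref{introthm:lifting} upgrades the structure on $(C_z(\GG_2),L_{\ell_2})$ to one on $(C(\GG_2),L_{\ell_2})$, giving both conclusions.

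The main obstacle I anticipate is precisely the transfer step across $\Phi$: one must verify that $\alpha$ induces not merely an abstract $*$-isomorphism of fusion algebras but a genuine isomorphism of the metric data, i.e.\ that the assignment $(F(\GG),\ell)\mapsto(\text{spectral triple})$ is natural enough that a length-preserving basis bijection implements a unitary equivalence carrying one commutator seminorm isometrically onto the other and extending to the operator-system completions. Once this naturality is pinned down the rest is a routine concatenation of the cited results, and in particular no coamenability or Kac hypothesis on $\GG_1$ is required.
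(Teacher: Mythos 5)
Your proposal is correct and takes essentially the same route as the paper: the length-intertwining bijection induces a unitary $\ell^2(\Irred(\GG_1)) \simeq \ell^2(\Irred(\GG_2))$ implementing a unitary equivalence of the fusion-algebra spectral triples, which transfers the spectral metric space property to $F(\GG_2)$, after which Corollary \ref{cor:fusion-vs-central} and Theorem \ref{introthm:lifting} yield the conclusions for $C_z(\GG_2)$ and then $C(\GG_2)$. The paper's proof is simply a terser version of this, and your side discussion of properness is harmless since the paper restricts to proper $\NN_0$-valued length functions throughout the relevant sections.
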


The quantum counterpart to a classical compact Lie group is known as a \emph{compact matrix quantum group}, and the additional data is encoded in a fixed  (not necessarily irreducible) corepresentation $u$ whose matrix coefficients generate $\Pol(\GG)$ as a $*$-algebra. One refers to the chosen $u$ as the \emph{fundamental corepresentation}. 
Natural examples of matrix quantum groups include $SU_q(2)$ as well as Wang's  liberated  versions of the permutation groups, orthogonal groups and unitary groups, denoted $S_n^+, O_n^+ $ and $U_n^+$, respectively.
Note  that $SU_q(2)$ is coamenable for all $q$ but only of Kac type when $q=\pm1$, while the liberated quantum groups are all of Kac type, but only $O_2^+, S_2^{+}, S_3^+$ and $S_4^+$  are coamenable. 
In the context of the present paper, the most interesting quantum permutation group  is $S_4^+$, since $C(S_4^+)$ is an infinite-dimensional, non-commutative $C^*$-algebra while $C(S_n^+)=C(S_n)$ for $n\leq 3$, so that there are no non-classical quantum symmetries.   Note also that one has  $SU_{-1}(2)\simeq O_2^+$ and that {$S_4^+\simeq SO_{-1}(3)$} (a deformation of the classical Lie group $SO(3)$); see \cite{banica-bichon, banica-sym, Wang:quantum-symmetry} for more details.  All of these comments are meant to provide insight into why  these  are the examples that appear in Corollary \ref{introcor:examples} below. 

When $\GG$ is a matrix quantum group and the fundamental corepresentation $u$ is  equivalent to its conjugate corepresentation, one has that every $\alpha\in \Irred(\GG)$ appears  in some tensor power $u^{\boxtimes k}$, and $\GG$ thereby obtains a natural length function $\ell\colon \Irred(\GG)\to \NN_0$ by setting $\ell(\alpha)$ equal to the smallest such $k$. This is the situation when $\GG$ is $SU(2)$, $O_2^+$,  $SO(3)$ or $S_4^+$, and  for these examples we obtain quantum metric spaces:

\begin{introcorollary}\label{introcor:examples}
When equipped with the length function $\ell$ arising from the standard fundamental corepresentation, one has that $(C(SU(2)), L_\ell)$,   $(C(O_2^+),L_\ell)$,  $(C(SO(3)), L_\ell)$ and $(C(S_4^+), L_\ell)$ are compact quantum metric spaces. 
\end{introcorollary}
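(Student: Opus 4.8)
The plan is to deduce all four assertions from Corollary \ref{cor:equivalent-fusion} (which itself rests on Theorem \ref{introthm:lifting} and Corollary \ref{cor:fusion-vs-central}), once I have established the two base cases: that $\ell$ endows the fusion algebras $F(SU(2))$ and $F(SO(3))$ with a compact quantum metric structure. The key structural input is that $SU(2)$ and $SU_{-1}(2)\simeq O_2^+$ share the same fusion rules, so the bijection $\Irred(SU(2))\to\Irred(O_2^+)$, $V_n\mapsto\alpha_n$, extends to a $*$-isomorphism $F(SU(2))\simeq F(O_2^+)$; since both length functions assign $n$ to the $n$-th irreducible (the fundamental corepresentations being $2$-dimensional in each case), this bijection intertwines $\ell$. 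As $O_2^+$ is coamenable and of Kac type, Corollary \ref{cor:equivalent-fusion} then upgrades the base case for $SU(2)$ to compact quantum metric structures on $F(O_2^+)$ and $C(O_2^+)$. The pair $SO(3)$, $S_4^+\simeq SO_{-1}(3)$ is handled identically; here one checks that the $4$-dimensional fundamental corepresentation of $S_4^+$ and the $3$-dimensional one of $SO(3)$ induce the same length under the fusion identification, since the extra trivial summand does not alter the tensor-power filtration. Finally $SU(2)$ and $SO(3)$ are classical, hence coamenable and of Kac type, and a further application of Corollary \ref{cor:equivalent-fusion} with the identity bijection promotes their base cases to $C(SU(2))$ and $C(SO(3))$. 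Everything thus hinges on the two base cases.

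For these I would work out $SU(2)$ in detail. The isomorphism $\Pol_z(SU(2))\simeq F(SU(2))$ realises the fusion algebra as the commutative algebra of class functions, dense in $C_z(SU(2))\simeq C([0,\pi])$, where $\theta\in[0,\pi]$ parametrises conjugacy classes by rotation angle and $\chi_n$ becomes $\theta\mapsto\sin((n+1)\theta)/\sin\theta$. The fusion spectral triple acts on $\ell^2(\Irred(SU(2)))=\ell^2(\NN_0)$ with the characters as orthonormal basis — this is where the Kac condition enters, guaranteeing $\langle\chi_m,\chi_n\rangle=\delta_{mn}$ for the Haar state — with $D_\ell\chi_n=n\chi_n$ and $F(SU(2))$ acting by the fusion product. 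The Clebsch--Gordan rule gives the bounded commutator $[D_\ell,\chi_1]\chi_m=\chi_{m+1}-\chi_{m-1}$, so that $L_\ell(\chi_1)=\|[D_\ell,\chi_1]\|=2$, and more generally $\|[D_\ell,\chi_n]\|$ is finite and polynomially bounded in $n$. Since $D_\ell$ has simple spectrum, the kernel of $L_\ell$ reduces to $\CC 1$, and it remains to verify Rieffel's total-boundedness criterion: that $\{a\in F(SU(2)) : L_\ell(a)\leq 1,\ \|a\|\leq 1\}$ is totally bounded in $C([0,\pi])$. By Arzel\`a--Ascoli it suffices to produce a modulus of continuity bounding $d_{L_\ell}(\theta,\theta')$ in terms of $|\theta-\theta'|$.

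The natural way to obtain such control is to compare $L_\ell$ with the length seminorm on the dual of the maximal torus. Restriction of class functions to a maximal torus $T\simeq U(1)$ identifies $C_z(SU(2))$ with the fixed-point algebra $C(U(1))^{\ZZ/2}=C^*_{\red}(\ZZ)^{\ZZ/2}$ for the Weyl action $z\mapsto z^{-1}$, the highest weight of $V_n$ matching $n\in\ZZ$ and $\ell(V_n)=n$ matching the word length $|n|$. Since $\ZZ$ has polynomial growth, the word-length seminorm makes $C^*_{\red}(\ZZ)=C(U(1))$ a compact quantum metric space \cite{ChristRieffel2017}, and as inversion acts isometrically, restriction to the $\ZZ/2$-fixed points (a closed subset of a totally bounded ball) yields a compact quantum metric structure on $C([0,\pi])$ with the folded geodesic metric $d_{\mathrm{torus}}$. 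The case $SO(3)$ is parallel: its conjugacy classes are again parametrised by $[0,\pi]$, the weights are now integral, and the maximal torus contributes a polynomial-growth copy of $\ZZ$, so the same comparison applies.

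The main obstacle is precisely this comparison — the commutative shadow of the difficulty resolved by Ozawa--Rieffel \cite{OzawaRieffel2005} and Christ--Rieffel \cite{ChristRieffel2017} in the group case, namely that a bound on $\|[D_\ell,a]\|$ must force a uniform modulus of continuity for $a$ as a function of $\theta$. The torus comparison is designed to import their conclusion: what is needed is the single-sided domination $L_\ell\geq c\,L_{\mathrm{torus}}$, which yields $d_{L_\ell}\leq c^{-1}d_{\mathrm{torus}}\leq C|\theta-\theta'|$ and hence the required equicontinuity and bounded diameter. Establishing this domination is the genuinely analytic point, because $L_\ell$ and $L_{\mathrm{torus}}$ arise from inequivalent representations of the class functions — on $\ell^2(\NN_0)$ by fusion, where each $\chi_n$ is a single basis vector, versus on $\ell^2(\ZZ)$ by multiplication, where $\chi_n$ is spread over its $n+1$ weights $-n,-n+2,\dots,n$. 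I expect the argument to proceed by expressing the fusion commutator $[D_\ell,\chi_n]$ through the weight data and bounding it below by the corresponding multiplication commutator uniformly in $n$, with the delicate region being the behaviour near the boundary classes $\theta=0,\pi$; the extension to general $a\in F(SU(2))$ and the passage to $F(SO(3))$ should then follow the same pattern.
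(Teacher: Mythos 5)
Your global reduction is exactly the paper's: establish the base cases on the fusion algebras of $SU(2)$ and $SO(3)$, then transfer to $O_2^+\simeq SU_{-1}(2)$ and $S_4^+\simeq SO_{-1}(3)$ via Corollary \ref{cor:equivalent-fusion} (coamenability and Kac type of the targets), and promote fusion-level statements to $C(\GG)$ via Corollary \ref{cor:fusion-vs-central} and Theorem \ref{introthm:lifting}. Your side remarks there are fine, including the observation that the trivial summand in the $4$-dimensional fundamental corepresentation of $S_4^+$ does not change the length function. The problem is that the base cases --- the only genuinely new content the corollary requires --- are not proved. You reduce them to the domination $L_\ell \geq c\, L_{\mathrm{torus}}$ between the fusion commutator seminorm on $\ell^2(\Irred(SU(2)))$ and the word-length seminorm coming from restriction of class functions to the maximal torus $C^*_{\red}(\ZZ)^{\ZZ/2}$, and you explicitly concede that this comparison is ``the genuinely analytic point'' which you only ``expect'' to work. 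That is the entire difficulty: the two seminorms come from inequivalent representations (each character $\chi_n$ is a single basis vector in the fusion picture but is spread over $n+1$ weights in the torus picture), and a uniform lower bound of one commutator norm by the other is precisely the kind of estimate that is hard near the singular classes $\theta=0,\pi$, as you yourself note. A proof that ends by conjecturing its key inequality is not a proof, so there is a genuine gap.

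For contrast, the paper closes the base cases with a much softer argument (Lemma \ref{lma:OR-coreps}): the fusion algebra $F(\GG)$, with its trace and the filtration $A_n=\mathrm{span}\{\alpha : \ell(\alpha)\leq n\}$, is a filtered $*$-algebra of the type treated by Ozawa--Rieffel, and their Main Theorem 1.2 in \cite{OzawaRieffel2005} only requires the Haagerup-type bound $\Vert P_m a_k P_n\Vert \leq C\Vert a_k\Vert_2$. For $SU(2)$ and $SO(3)$ the length function is injective (each length class is spanned by a single irreducible) and the Clebsch--Gordan rules are multiplicity-free, so $P_m a_k P_n \xi$ is a single term with coefficient bounded by the structure constant, and the bound holds with $C=1$ in essentially one line. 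No torus restriction, no Christ--Rieffel input, and no comparison of seminorms across representations is needed. If you want to salvage your approach you would have to actually prove the domination $L_\ell \geq c\,L_{\mathrm{torus}}$; but the Ozawa--Rieffel route shows this detour is unnecessary.
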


\subsubsection*{Notation} Throughout the article, we will assume that inner products are conjugate linear in the first argument. Moreover, $\odot$ will denote the algebraic tensor product of 
algebras, $\bar{\tens}$ is the von Neumann tensor product, and $\hat{\tens}$ is the (completed) tensor product of Hilbert spaces. We will denote by $\tens$ the minimal tensor product of $C^*$-algebras or the algebraic tensor product of vector spaces, and the usage will be clear from context. For indices $i$ and $j$ in some index set, we will denote by $\delta_{ij}$ the corresponding Kronecker delta.\\
Lastly, we will be using two versions of the Greek letter epsilon: the symbol $\epsilon$ will be reserved for the counit in Hopf algebras, while $\varps$ will be used to denote an arbitrarily small positive number.

\subsubsection*{Acknowledgements} The authors would like to thank Jens Kaad for his valuable insights and many interesting discussions, {
Christian Voigt for reminding them about the importance of understanding basic examples and Dimitris M.~Gerontogiannis and Bram Mesland for asking interesting questions and pointing out the  new approach in \cite{GerontogiannisMesland2025}.} Lastly, they would like to thank the anonymous referee for their thorough reading of the paper and many helpful comments.

The authors gratefully acknowledge the financial support from  the Independent Research Fund Denmark through grant no.~9040-00107B, 7014-00145B and 1026-00371B {and from the European Commision through the MSCA Staff Exchanges grant no.~101086394. The first named author was supported by The Research Council of Norway project 324944.

\section{Preliminaries}

\subsection{Compact quantum metric spaces}\label{sec:cqms}

The present section contains a brief introduction to Rieffel's theory of compact quantum metric spaces \cite{Rie:MSS, Rie:GHD,Rie:CQM},  which provides   an elegant non-commutative analogue of the classical theory of compact metric spaces.  The theory of compact quantum metric spaces can be based on either order unit spaces \cite{Rie:MSS}, $C^*$-algebras \cite{Li:ECQ,Lat:QGH} or operator systems \cite{Ker:MQG, Kerr-Li:GH}, among which we have chosen the latter approach for the sake of coherence with recent developments in \cite{KaadKyed2022, walter:GH-convergence, walter-connes:truncations}. To this end, we recall that a (concrete) \emph{operator system} is a self-adjoint, unital subspace $\X$ in a given unital $C^*$-algebra $A$. We will denote the closure of $\X$ by $X$ and refer to $\X$ as \emph{complete} if $\X=X$.
An element $x\in \X$ is \emph{positive} if this is the case in the ambient $C^*$-algebra $A$, and the \emph{state space} $\S(\X)$ is defined as the positive, unital functionals on $\X$. Note that $\S(\X)$ is compact for the weak$^*$ topology and homeomorphic to $\S(X)$ via the restriction map. 
\begin{dfn}[{\cite{Rie:matricial-bridges}}]
A \emph{slip-norm} on an operator system $\X$  is a seminorm $L\colon\X \to [0,\infty)$ satisfying that $L(1)=0$ and $L(x)=L(x^*)$ for all $x\in \X$.
\end{dfn}
Given a slip-norm $L$ on an operator system $\X$, one obtains an extended metric $d_L$ on $\S(\X)$ by setting
\[
d_L(\mu,\nu):=\sup\big\{ |\mu(x)-\nu(x)| \mid x\in \X, L(x)\leq 1 \big\}
\]
The adjective ``extended''  indicates that $d_L$ may attain the value infinity, but otherwise satisfies the usual properties of a metric.

\begin{dfn}
A \emph{compact quantum metric space} consists of a pair $(\X,L)$ where $\X$ is an operator system and $L\colon \X\to [0,\infty ) $ is a slip-norm such that $d_L$ 
metrizes
the weak$^*$ topology on $\S(\X)$. In this case, $L$ is referred to as a \emph{Lip-norm}.
\end{dfn} 
\begin{example}
The passage from classical compact metric spaces to their quantum analogues is obtained as follows: given a compact metric space $(M,d)$, one puts
\[
\X:=C_{\text{Lip}}(M):=\{f\in C(M) \mid f \text{ is Lipschitz continuous }\},
\] 
and defines $L_d\colon \X \to [0,\infty)$ by setting
\[
L_d(f):=\sup\left\{\frac{|f(a)-f(b)|}{d(a,b)} \mid a,b\in M, a\neq b \right\}.
\]
In this way, one obtains a compact quantum metric space. This was originally proven in \cite{KaRu:FSE,KaRu:OSC}; for a modern approach see \cite[Lemma 3.4]{Kaad2023}. Interesting non-commutative examples are obtained by considering a strongly continuous,  ergodic action $G\overset{\alpha}{\curvearrowright} A$ of a compact metric group $G$ on a unital $C^*$-algebra $A$ and defining
\[
L(a):=\sup\left\{\frac{\|\alpha_g(a)-a\|}{d_G(g,e)} \mid g\in G\setminus\{e\} \right\}
\]
on those elements for which the right hand side is finite \cite[Theorem 2.3]{Rie:MSA}. 
\end{example} 
\begin{remark}
As is customary in the field, we will often extend a slip-norm $L$ on an operator system $\X$ to the closure $X$ by declaring $L(x)=\infty$ for $x\in X\setminus \X$. 
We will further employ the standard abuse of notation and also refer to $(X,L)$ as a compact quantum metric space when $L \colon \X \to [0, \infty)$ is a Lip-norm.
\end{remark}

The theory of compact quantum metric spaces is heavily inspired by  Connes' non-commutative differential geometry \cite{Con:NCG}, in that many interesting examples arise from spectral triples.
For our purposes,  it is most convenient to choose a minimal set of axioms defining a spectral triple, and we will thus be ignoring gradings, regularity properties etc.  For background on non-commutative geometry, we refer to \cite{Con:NCG, elements-of-ncg}.

\begin{dfn}
A \emph{spectral triple} on a unital $C^*$-algebra $A$ consists of a representation\footnote{which we shall notationally suppress.} of $A$ on a Hilbert space $H$, a self-adjoint, unbounded operator $D$ on $H$ and a dense, unital $*$-subalgebra $\A\subseteq A$ satisfying that
\begin{itemize}
\item[(i)] Every $a\in \A$ preserves the domain of $D$ and $[D,a]=Da-aD$  extends to a bounded operator $\partial_D(a)$ on $H$.
\item[(ii)] The operator $D$ has compact resolvent. 
\end{itemize}
\end{dfn}
Given a spectral triple $(\A,H,D)$, one obtains a natural seminorm on $\A$ by setting $L_D(a):=\|\partial_D(a)\|$. In many interesting cases \cite{AgKa:PSM, Rie:GrpCstar, Christensen-Ivan:spectral-triples}, but not always \cite{KyedNest, PutJul:subshifts}, $L_D$ is a Lip-norm, in which case $(\A,H,D)$ is called a \emph{spectral metric space}. \\
To illustrate the complexity of the situation,  we now consider the fundamental example, dating back to Connes' seminal paper \cite{Con:CFH}, arising from a countable discrete group $\Gamma$ equipped with a proper length function $\ell\colon \Gamma \to [0,\infty)$. That is, the function $\ell$ satisfies
\begin{itemize}
\item[(i)] $\ell(x)=\ell(x^{-1})$ for all $x\in \Gamma$.
\item[(ii)] $\ell(xy)\leq \ell(x)+\ell(y)$ for all $x,y\in \Gamma$.
\item[(iii)] $\ell(x)=0$ if and only if $x=e$, and $\ell^{-1}([0,R])$ is finite for all $R\geq 0$ (properness).
\end{itemize}
When  $\Gamma$ is finitely generated, one obtains a length function by  fixing a finite, symmetric, generating set and setting $\ell(x)$ equal to the minimal length of a word in the generators expressing $x$. \\
From a length function $\ell$, one obtains a spectral triple $(\CC \Gamma, \ell^2(\Gamma),D_\ell)$ on the reduced group $C^*$-algebra $C^*_{\red}(\Gamma)$, by defining $D_\ell$ to be the closure of the unbounded operator $\mathcal{D}_\ell\colon \text{span}\{\delta_x \mid x\in \Gamma\} \to \ell^2(\Gamma)$ defined on the standard orthonormal basis  by $\mathcal{D}_\ell(\delta_x):=\ell(x)\cdot \delta_x$. Even in this case, it is very unclear when $L_{D_\ell}$ is a Lip-norm ---  at the time of writing, this is basically only known for groups of polynomial growth \cite{ChristRieffel2017} and hyperbolic groups  \cite{OzawaRieffel2005}. \\

We end this section with the primary tools available for verifying that a given pair $(\X,L)$, consisting of an operator system and a slip-norm thereon,  is a compact quantum metric space. The main result in this direction is due to Rieffel.
\begin{theorem}[{\cite[Theorem 1.8]{Rie:MSA}}]\label{thm:Rieffels-criterion}
The pair $(\X,L)$ is a compact quantum metric space if and only if the set $\{x\in \X\mid L(x)\leq 1\}$ projects to a totally bounded set in $\X/\CC$ via the quotient map $\X\ni x\mapsto [x] \in \X/\CC$.
\end{theorem}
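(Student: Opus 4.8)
The plan is to recast the total boundedness condition in $\X/\CC$ as an Arzelà–Ascoli statement about Lipschitz functions on the state space. The bridge is the evaluation map $a \mapsto \hat a$, where $\hat a \colon \S(\X) \to \CC$ is defined by $\hat a(\mu) := \mu(a)$: whenever $L(a) \leq 1$, the function $\hat a$ is $1$-Lipschitz for $d_L$, since $|\hat a(\mu) - \hat a(\nu)| = |\mu(a) - \nu(a)| \leq L(a)\,d_L(\mu,\nu) \leq d_L(\mu,\nu)$. The first thing I would establish, as a separate lemma, is that the quotient norm on $\X/\CC$ is comparable to the oscillation $\sup_{\mu,\nu}|\mu(a)-\nu(a)|$ measured by states. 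For self-adjoint $a$ the optimal scalar approximation is realized at the midpoint of the spectrum, so $\inf_\lambda \|a - \lambda 1\| = \tfrac12\big(\sup_\mu \mu(a) - \inf_\mu \mu(a)\big)$, and for general $a \in \X$ one reduces to the self-adjoint parts using $L(x) = L(x^*)$, obtaining constants $c_1,c_2 > 0$ with $c_1\|[a]\| \leq \sup_{\mu,\nu}|\mu(a)-\nu(a)| \leq c_2\|[a]\|$. Note also that $d_L$ genuinely separates states, since $\X$ separates $\S(\X)$ and $L$ is finite on all of $\X$.

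For the implication that total boundedness yields a Lip-norm, I would first observe that a totally bounded set is bounded, say $\sup\{\|[a]\| : L(a)\leq 1\} = r < \infty$; since $|\mu(a)-\nu(a)| \leq 2\|a - \lambda 1\|$ for every scalar $\lambda$, taking the infimum gives $d_L(\mu,\nu)\leq 2r$, so $d_L$ is a finite metric. To identify the topologies it then suffices to prove that the identity map $(\S(\X),\text{weak}^*) \to (\S(\X),d_L)$ is continuous, for a continuous bijection from a compact space onto a Hausdorff space is a homeomorphism. Given $\varps > 0$, total boundedness supplies $a_1,\dots,a_n$ with $L(a_i)\leq 1$ such that every admissible $a$ satisfies $\|a - a_i - \lambda 1\| < \varps$ for some $i$ and scalar $\lambda$; a telescoping estimate then yields
\[
|\mu(a)-\nu(a)| \leq 2\varps + \max_{1\leq i\leq n}|\mu(a_i)-\nu(a_i)|,
\]
and taking the supremum over admissible $a$ gives the same bound for $d_L(\mu,\nu)$. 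Since only finitely many $a_i$ occur, weak$^*$-convergence $\mu_\alpha \to \mu$ forces the right-hand side below $3\varps$ eventually, which is the desired continuity.

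For the converse, assume $d_L$ metrises the weak$^*$ topology. Then $\S(\X)$ is $d_L$-compact, hence of finite diameter, which already yields boundedness of $\{[a] : L(a)\leq 1\}$. Fixing a base state $\mu_0$ and passing to $\hat a - \mu_0(a)$, the family $\{\hat a - \mu_0(a) : L(a)\leq 1\}$ consists of $1$-Lipschitz functions on the compact metric space $(\S(\X),d_L)$ all vanishing at $\mu_0$, and so is uniformly bounded by the diameter. Arzelà–Ascoli then makes this family totally bounded in $C(\S(\X))$ for the supremum norm, and the norm comparison from the first paragraph transports this back to total boundedness of $\{[a] : L(a)\leq 1\}$ in $\X/\CC$.

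The main obstacle I anticipate is the norm comparison lemma, which is the hinge of both directions: verifying it for an operator system, rather than in a commutative or purely self-adjoint setting, requires care in reducing to self-adjoint parts via $L(x)=L(x^*)$ and in controlling state values of non-self-adjoint elements. Once this equivalence of norms and the passage between $\S(\X)$ and the ambient state space are secured, the telescoping estimate and the Arzelà–Ascoli step are routine.
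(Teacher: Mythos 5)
Your proposal is correct, but note that the paper itself does not prove this statement at all: it is imported verbatim, with citation, from Rieffel's work (Theorem 1.8 of \emph{Metrics on state spaces}), and is then only \emph{used} (e.g.\ in Corollary \ref{cor:fusion-vs-central}). So the relevant comparison is with Rieffel's original argument, and your proof essentially reconstructs it, transported from Rieffel's real order-unit-space setting to the complex operator-system setting of the paper. Both directions are sound as you present them: for sufficiency, total boundedness gives finiteness of $d_L$ (via $|\mu(a)-\nu(a)|\leq 2\|[a]\|$), the finite net gives the uniform estimate $d_L(\mu,\nu)\leq 2\varps+\max_i|\mu(a_i)-\nu(a_i)|$, and the conclusion follows since a continuous bijection from the compact space $(\S(\X),\mathrm{weak}^*)$ to the Hausdorff space $(\S(\X),d_L)$ is a homeomorphism; for necessity, the $1$-Lipschitz evaluation functions $\hat a-\mu_0(a)$ on the compact metric space $(\S(\X),d_L)$ are equicontinuous and uniformly bounded, Arzel\`a--Ascoli gives total boundedness in $C(\S(\X))$, and your two-sided norm comparison (which holds with explicit constants $\|[a]\|\leq \sup_{\mu,\nu}|\mu(a)-\nu(a)|\leq 2\|[a]\|$) pulls this back to $\X/\CC$ because a map bounded below transports total boundedness backwards. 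Two small remarks. First, your midpoint formula $\inf_\lambda\|a-\lambda 1\|=\tfrac12\bigl(\sup_\mu\mu(a)-\inf_\mu\mu(a)\bigr)$ for self-adjoint $a$ tacitly uses that states on the operator system $\X$ are exactly the restrictions of states on the ambient $C^*$-algebra; this is standard (a norm-preserving Hahn--Banach extension of a unital contractive functional is automatically positive) but deserves a line. Second, the hypothesis $L(x)=L(x^*)$, which you invoke in the norm-comparison lemma, is in fact never needed: that lemma compares the quotient norm with the oscillation and makes no reference to $L$, and neither direction of the main argument uses $*$-invariance, so your proof actually establishes the criterion for any seminorm with $L(1)=0$.
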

Here, and throughout the text, we tacitly identify $\CC$ with the scalar multiples $\CC \cdot 1$ of the unit $1$ in the operator system $\X$, and the total boundedness is with respect to the quotient norm induced by the operator norm on $\X$.  Since $X$ is complete, this is equivalent to the set  $\{[x]\mid  x \in \X,  L(x)\leq 1\}$ having compact closure in  $X/\CC$.\\

The second criterion is due to Kaad, building, in turn,  on Rieffel's  theorem  above. To state this result,  it is convenient to first introduce the notion of \emph{finite diameter}. 
\begin{dfn}[\cite{Rie:MSS}]\label{def:finite-diameter}
The pair $(\X, L)$ is said to have \emph{finite diameter} if there exists a constant $C \geq 0$ such that 
\begin{align}\label{eq:diameter-ineq}
\Vert [x] \Vert_{\X/\CC} \leq C \cdot L(x) \quad \mbox{for all } x \in \C X.
\end{align}
\end{dfn}

Having finite diameter, in the sense just defined,  is equivalent to the (extended) metric $d_L$ assigning a finite diameter to the state space $\S(\X)$ \cite[Proposition 1.6]{Rie:MSA}. 
Note also, that by connectedness and compactness of $\S(\C X)$ in the weak$^*$  topology,   a compact quantum metric space automatically has finite diameter.
Kaad's criterion now reads as follows:

\begin{theorem}[{\cite[Theorem 3.1]{Kaad2023}}]\label{thm:kaad-criterion}
The pair $(\X,L)$ is a compact quantum metric space if and only if
\begin{itemize}
\item[(i)] $(\X,L)$ has finite diameter.
\item[(ii)] For every $\varps>0$ there exist an operator system $\X_\varps$ and unital, bounded maps $\iota, \Phi\colon \X \to \X_\varps$ such that $\iota$ is an isometry,  $\Phi$ is positive with finite-dimensional image and  $\|\iota(x)-\Phi(x)\|\leq \varps\cdot  L(x)$ for all $x\in \X$.

\end{itemize}
\end{theorem}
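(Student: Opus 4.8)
The plan is to run both implications through Rieffel's criterion (Theorem~\ref{thm:Rieffels-criterion}) as a common pivot: writing $B_L:=\{x\in\X\mid L(x)\le 1\}$, it suffices in each direction to control when the image $[B_L]$ is totally bounded in $\X/\CC$. Finite diameter enters through the observation, recorded above, that a compact quantum metric space automatically has finite diameter, so condition (i) is never the genuine obstruction in the forward implication; its role is purely to bound $[B_L]$ so that total boundedness can be tested against a finite-dimensional image.

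For the implication ``(i) and (ii) $\Rightarrow$ compact quantum metric space'', I would fix $\varps>0$ and take $\X_\varps,\iota,\Phi$ as in (ii). For each $x\in B_L$ finite diameter lets me pick a scalar $\lambda_x\in\CC$ with $\|x-\lambda_x\|\le C+1$. Since $\iota$ and $\Phi$ are unital and linear, we have $\iota(x)-\lambda_x=\iota(x-\lambda_x)$ and $\Phi(x)-\lambda_x=\Phi(x-\lambda_x)$, and the estimate $\|\iota(x)-\Phi(x)\|\le\varps L(x)\le\varps$ survives this translation. Hence $\{\Phi(x-\lambda_x)\mid x\in B_L\}$ is a bounded subset of the finite-dimensional space $\Phi(\X)$, so it is totally bounded; covering it by finitely many $\varps$-balls and using the estimate shows $\{\iota(x-\lambda_x)\mid x\in B_L\}$ is totally bounded in $\X_\varps$. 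As $\iota$ is an isometry, $\{x-\lambda_x\mid x\in B_L\}$ is then totally bounded in $\X$, and applying the contractive quotient map yields a finite $O(\varps)$-net for $[B_L]=\{[x-\lambda_x]\}$. Since $\varps$ was arbitrary, $[B_L]$ is totally bounded and Theorem~\ref{thm:Rieffels-criterion} applies.

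For the converse, assume $(\X,L)$ is a compact quantum metric space. Then (i) holds automatically, and by Theorem~\ref{thm:Rieffels-criterion} the set $[B_L]$ is totally bounded in $\X/\CC$; combined with the diameter bound this shows that, after subtracting optimal scalars, $B_L$ is a norm-totally-bounded subset of $\X$. Consequently, for each $\varps>0$ there is a finite-dimensional, self-adjoint, unital subspace $W\subseteq\X$ (self-adjointness arranged via $L(x)=L(x^{*})$) with $\dist(x,W)\le\varps L(x)$ for all $x\in\X$. To produce the data in (ii) I would take $\iota$ to be a fixed, unital, (completely) isometric embedding of $\X$ into an injective ambient operator system $\X_\varps$, and then build a positive, unital map $\Phi$ whose image is a finite-dimensional operator system close to $\iota(W)$ and which reproduces $\iota(x)$ to within $\varps L(x)$; the natural route is to assemble finitely many matrix states (equivalently, compressions) of $\X$ into a single unital completely positive map, so as to retain positivity while achieving operator-norm accuracy on the precompact translated unit ball.

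I expect the main obstacle to be precisely this construction of $\Phi$ in the converse. Total boundedness supplies only a \emph{linear} finite-dimensional approximation, whereas (ii) demands a \emph{positive}, unital, finite-rank one; moreover the scalar state space alone is insufficient, since for non-commutative $\X$ the values $\mu(x)$ with $\mu\in\S(\X)$ discard exactly the off-diagonal information needed to recover $x$ in operator norm. The heart of the argument is therefore to transfer the metrization of the weak$^{*}$ topology by $d_L$ into genuine operator-norm control while remaining inside the unital completely positive category --- this is the step at which the operator-system framework, rather than the order-unit one, becomes indispensable. Once such a $\Phi$ is in hand, the inequality $\|\iota(x)-\Phi(x)\|\le\varps L(x)$ together with the finite-diameter bound combine routinely to close the equivalence.
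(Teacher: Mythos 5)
First, a structural point: the paper you are working from does not prove this statement at all --- it is imported verbatim from the cited source ({\cite[Theorem 3.1]{Kaad2023}}) and used as a black box. So your attempt can only be judged on its own merits, not against an internal argument of this paper.

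Your proof of the implication ``(i) and (ii) $\Rightarrow$ compact quantum metric space'' is correct and complete: translating each $x$ in the Lip-ball by a scalar furnished by finite diameter, pushing through $\Phi$ into its finite-dimensional (hence locally totally bounded) image, transferring back along the isometry $\iota$, and finally quotienting to $\X/\CC$ is exactly the right way to feed Rieffel's criterion (Theorem \ref{thm:Rieffels-criterion}); the translation-invariance of the estimate $\|\iota(x)-\Phi(x)\|\leq \varps L(x)$ under unital maps is the key observation and you use it correctly. It is also worth noting that this is the only direction the present paper ever invokes: in the proof of Theorem \ref{introthm:lifting} it constructs the finite-rank ucp maps $\beta_n$ and concludes via precisely this implication.

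The converse, however, contains a genuine gap, which you yourself flag: the positive, unital, finite-rank map $\Phi$ is never constructed. What you do establish --- norm-total boundedness of the translated Lip-ball, hence a finite-dimensional self-adjoint unital subspace $W$ with $\dist(x,W)\leq\varps L(x)$ --- is a purely \emph{linear} approximation statement, and the passage from this to a \emph{positive} finite-rank approximation of an isometric copy of $\X$ is the entire content of this direction of the theorem. The routes you gesture at do not close it. The evaluation map into $C(\S(\X))$ (combined with partitions of unity on the metrized state space) is unital and positive, but it is not an isometry on non-self-adjoint elements: for $e_{12}\in M_2$ one has $\sup_{\mu\in\S(M_2)}|\mu(e_{12})|=\tfrac12<1=\|e_{12}\|$, so scalar states compute numerical radius rather than norm. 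Compressions $x\mapsto PxP$ by finite-rank projections are ucp and do recover the norm as a supremum, but $\|PxP-x\|$ does not become small, so they cannot by themselves yield $\|\iota(x)-\Phi(x)\|\leq\varps L(x)$; and padding an infinite family of such measurements by a state multiple of the identity on the remaining coordinates reintroduces an error of the size of the diameter, not of $\varps$. Reconciling positivity, unitality, finite rank, and operator-norm accuracy against an isometric embedding simultaneously is precisely what the cited source accomplishes with a dedicated construction, and without that step your proposal establishes only one half of the stated equivalence.
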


\subsection{Quantum groups}\label{sec:qgrps}
In this section, we survey the relevant background results from the theory of compact quantum groups, following the $C^*$-algebraic approach initiated by Woronowicz \cite{wor:cpqgrps}; for detailed expositions, see  \cite{KlSc:QGR} or \cite{Timmermann-book}. A compact quantum group is a  pair $(A, \Delta)$ consisting of a unital $C^*$-algebra $A$ and a unital $*$-homomorphism $\Delta\colon A\to A \tens A$ satisfying the coassociativity condition $(\Delta \tens \id)\Delta=(\id \tens \Delta )\Delta$ and such that the sets $\Delta(A)(1\tens A)$ and $\Delta(A)(A\tens 1)$ span dense subspaces in $A\tens A$.  Recall that here, and below,``$\tens$'' is used to denote the minimal tensor product of $C^*$-algebras. 

\begin{example}
 Given a compact Hausdorff group $G$, setting $A=C(G)$ and letting $\Delta$ be the map dual to the group multiplication, one obtains  a compact quantum group. Even better, all commutative examples take this form,  making the theory of compact quantum groups perfectly compatible with Gelfand duality. 
 Non-commutative examples may be constructed from a discrete group $\Gamma$, by setting $A=C^*_\red(\Gamma)$ and $\Delta(\lambda_\gamma)=\lambda_\gamma\tens \lambda_\gamma$, where $\lambda$ denotes the left regular representation of $\Gamma$.  The quantum group literature contains numerous interesting examples not arising from compact or discrete groups, for instance Woronowicz' quantum $SU(2)$ \cite{Wor:UAC} and Wang's liberations of the unitary groups, the orthogonal groups and the permutation groups \cite{Wang:free-products, Wang:quantum-symmetry}.
 
 \end{example}
  One of the main features of the theory is the existence of a unique bi-invariant state $h\colon A\to \CC $, meaning that 
\begin{align}\label{eq:bi-inv}
(h\tens \id)\Delta(a)=h(a)1=(\id\tens h)\Delta(a)
\end{align}
 for all $a\in A$. In the commutative situation, where $A=C(G)$, $h$ is given by integration against the unique Haar probability measure $\mu$ and the bi-invariance property \eqref{eq:bi-inv} is equivalent to the translation invariance of $\mu$. For this reason, $h$ is dubbed the \emph{Haar state} also in the non-commutative setting. 
  When $A=C^*_\red(\Gamma)$, the Haar state is simply the canonical trace. However,   in general $h$ need not be a trace and when it is, the compact quantum group $\GG$ is said to be of \emph{Kac type.}\\
  Even in the non-commutative situation, the notation is often chosen to reflect the fact that one thinks of a compact quantum group as a  ``non-commutative algebra of continuous functions'' on a (non-existing) quantum object $\GG$.   With this in mind, we therefore introduce the following standard notation for the objects arising from the GNS construction associated with $h$: 
  
  \begin{enumerate}
\item[-] The Hilbert space is denoted by $L^2(\GG)$, {its norm by $\|\cdot \|_2$,} and the canonical map from $A$ to $L^2(\GG)$ is denoted by $\Lambda$.
\item[-] The GNS representation is denoted by $\lambda\colon A\to \BB(L^2(\GG))$ and its image $\lambda(A)$ by $C(\GG)$.
\item[-] The von Neumann algebra generated by $C(\GG)\subseteq \BB(L^2(\GG))$ is denoted by $L^\infty(\GG)$.
\end{enumerate} 
The comultiplication  descends to the $C^*$-algebra  $C(\GG)$, turning it into a compact quantum group in its own right, whose Haar state is the vector state 
$\lambda(a)\mapsto \left\langle \Lambda(1), \lambda(a) \Lambda(1) \right\rangle$. 
In what follows, we will only be concerned with this represented version of the quantum group, and therefore also denote the aforementioned vector state by $h$, the induced comultiplication by $\Delta$ and the canonical injection $C(\GG) \to L^2(\GG)$ by $\Lambda$.  Whenever notationally convenient, we shall suppress the map $\Lambda$ and simply view 
$C(\GG)$ as a subspace of $L^2(\GG)$.

\subsubsection{The fundamental unitaries}\label{sec:fund-unit}
On $L^2(\GG)\hat{\tens}L^2(\GG)$, we have the left- and right fundamental unitaries, $W$ and $V$, defined on the dense subset $\Lambda\tens \Lambda(C(\GG) \odot C(\GG))$ by the relations
\[
W^*(\Lambda(x) \tens \Lambda(y))=\Lambda\tens \Lambda\big(\Delta(y)(x\tens 1)\big) \quad \text{ and } \quad V(\Lambda(x) \tens \Lambda(y))=\Lambda\tens \Lambda\big( \Delta(x)(1\tens y)\big).
\]
They both implement the comultiplication on $C(\GG)$ by means of the formula
\begin{align}\label{eq:implementation-eq}
\Delta(x)=W^*(1\tens x)W =V(x\tens 1) V^*.
\end{align}
The comultiplication therefore extends to a normal  $*$-homomorphism $\Delta\colon L^\infty(\GG) \to L^\infty(\GG)\bar{\tens}L^\infty(\GG)$ (still implemented by $W$ and $V$) turning $L^\infty(\GG)$ into a compact von Neumann algebraic quantum group \cite{Kustermans-Vaes:vNA, Timmermann-book}.  
Following the established conventions in the literature, we think of $(C(\GG), \Delta)$ and $(L^\infty(\GG), \Delta)$ as different operator algebraic realizations of the (non-existing) underlying quantum group $\GG$, which is indicated linguistically by phrases such as ``Let $\GG$ be a compact quantum group....''.

\subsubsection{Flips and leg-numbering notation} On the Hilbert space $L^2(\GG)\hat{\tens} L^2(\GG)$, we will be using the flip unitary $\Sigma$, defined by $\Sigma( \Lambda(x)\tens \Lambda (y))=\Lambda(y)\tens \Lambda(x)$. Note that conjugation with $\Sigma$ implements the corresponding flip map $\sigma$ at the $C^*$-algebraic level in the sense that 
\[
\sigma (a\tens b):=b\tens a=\Sigma(a\tens b)\Sigma,
\]
for $a,b\in \BB(L^2(\GG))$. 
We will also be using the standard \emph{leg-numbering notation},  meaning that for an operator $T\in \BB(L^2(\GG) \hat{\tens}L^2(\GG))$, $T_{12}$ denotes $T\tens 1$, $T_{13}$ denotes  $(1\tens \sigma)(T\tens 1)$, etc. 
Lastly, whenever $a\in C(\GG)$ satisfies that $\Delta(a)\in C(\GG) \odot C(\GG)$ we will 
frequently
use the \emph{Sweedler notation} and write $\Delta(a)=a_{(1)}\tens a_{(2)}$ for ease of notation. A similar remark applies to more general coactions of $\GG$, to be introduced in Section \ref{sec:coact} below.

\subsubsection{Corepresentation theory}\label{sec:corep}
A \emph{unitary corepresentation} of a compact quantum group $\GG$ on a Hilbert space $H$ is a unitary $U\in L^\infty(\GG)\bar{\tens}\BB(H)$ satisfying 
\begin{equation}\label{eq:corep}
(\Delta \tens \id )(U)=U_{13}U_{23}.
\end{equation}
Both $W$ and $V$ satisfy the so-called \emph{pentagon identity}  (stating that $W_{12}W_{13}W_{23}=W_{23}W_{12}$ in the case of $W$) which, combined with the formulas \eqref{eq:implementation-eq}, implies that $W$ and $\Sigma V\Sigma$ are both unitary corepresentations. \\
The  corepresentation theory of $\GG$ mirrors, in many ways, the representation theory of a classical compact group, in that one has  natural notions of direct sums, tensor products, equivalence and irreducibility.  Moreover,  every finite-dimensional unitary corepresentation decomposes as a direct sum of irreducible ones. We let $I=\Irred(\GG)$ denote the set of equivalence classes of irreducible corepresentations and let $(u^{\alpha})_{\alpha \in I}$ denote a fixed set of representatives for these equivalence classes.  The unit in $C(\GG)$ is a 1-dimensional (irreducible) corepresentation, and we denote its class in $\Irred(\GG)$ by $e$. 
Each $u^{\alpha}$ is an element in $L^\infty(\GG) \bar{\tens} \BB(H_\alpha)$ for a finite-dimensional Hilbert space $H_\alpha$ (say, of dimension $d_\alpha)$, and upon choosing an orthonormal basis for $H_{\alpha}$ we obtain an identification $L^\infty(\GG) \bar{\tens} \BB(H_\alpha)=\mathbb{M}_{d_\alpha}(L^\infty(\GG))$, and may thus also consider the associated matrix coefficients $u_{ij}^\alpha\in L^\infty(\GG)$. It turns out that $u_{ij}^\alpha \in C(\GG)$ and that the set
\[
\Pol(\GG):=\text{span}_{\CC}\{u_{ij}^\alpha \mid \alpha \in I, 1\leq i,j \leq d_\alpha\}
\]
forms a dense Hopf $*$-subalgebra in $C(\GG)$.
 More precisely, $\Delta$ restricts to a comultiplication $\Delta\colon \Pol(\GG) \to \Pol(\GG) \odot \Pol(\GG)$ given by
$
\Delta(u_{ij}^\alpha)= \sum_{k=1}^{d_\alpha} u_{ik}^\alpha \tens u_{kj}^\alpha
$, and  $\Pol(\GG)$  can be further equipped with an antipode $S\colon \Pol(\GG) \to \Pol(\GG)$ and a counit $\epsilon\colon \Pol(\GG) \to \CC$; \cite[Theorem 5.4.1]{Timmermann-book}.
For future reference, we note that since $\Pol(\GG)$ is a Hopf $*$-algebra, $W^*$ and $V$ actually map $\Pol(\GG)\odot \Pol(\GG)$ bijectively onto itself (see e.g.~\cite[Theorem 1.3.18]{Timmermann-book}) and the same is therefore true for $W$ and $V^*$. \\
Direct sums and tensor products 
of corepresentations are denoted $\oplus$ and $\boxtimes$, respectively. For $\alpha \in \Irred(\GG)$, the conjugate corepresentation is defined as $\overline{u^{\alpha} }:=((u_{ij}^\alpha)^*)$. This still satisfies the corepresentation relation \eqref{eq:corep} but may fail to be a unitary matrix ---  
it is, however,  equivalent to a unique element in $\Irred(\GG)$ which we will denote by $\bar{\alpha}$; see e.g.~\cite[Theorem 5.3.3 and Corollary 5.3.10]{Timmermann-book}. Lastly, we denote by $N_{\alpha, \beta}^\gamma$ the multiplicity of $\gamma \in \Irred(\GG)$ in the decomposition  of $\alpha \boxtimes \beta$ into irreducibles; i.e.
\[
\alpha \boxtimes \beta \simeq \bigoplus_{\gamma \in \Irred(\GG)} \gamma^{\oplus N_{\alpha, \beta}^\gamma}
\]
In the case of a classical compact group, the above constructions of course agree with their classical counterparts. For examples of the form $C^*_\red(\Gamma)$, all irreducible corepresentations are 1-dimensional and $(\lambda_\gamma)_{\gamma \in \Gamma}$ is a complete set of representatives for $\Irred(\GG)$, and $\Pol(\GG)$ agrees with the group algebra $\CC\Gamma \subset C^*_\red(\Gamma)$.

\subsubsection{Coactions}\label{sec:coact}
A (left) \emph{coaction} of $\GG$ on a Hilbert space $H$ is a normal, unital, injective $*$-homomorphism $\delta\colon \BB(H)\to L^\infty(\GG)\bar{\tens} \BB(H)$ satisfying $(\id\tens\delta)\delta=(\Delta\tens \id)\delta$, see  \cite{Vaes:implementation}. For every unitary corepresentation $U\in L^\infty(\GG)\bar{\tens} \BB(H)$, one obtains an associated coaction $\delta\colon \BB(H)\to L^\infty(\GG)\bar{\tens} \BB(H)$ by setting $\delta(T)=U^*(1\tens T)U$.

\subsubsection{Coamenability}
A compact quantum group $\GG$ is said to be \emph{coamenable}, if the counit $\epsilon \colon \Pol(\GG)\to \cc$ extends to a character on $C(\GG)$. In the case of a classical discrete group $\Gamma$, the counit $\epsilon \colon \cc \Gamma \to \cc$ is simply the extension of the trivial representation, so in this situation the notion of coamenability agrees with amenability of the group $\Gamma$, see e.g.~\cite{Brown-Ozawa}.
The notion of coamenability has been studied extensively, and may be characterised in a number of different ways; see e.g.~\cite{tomatsu-amenable, bedos-murphy-tuset} for details. \\
Other examples of coamenable quantum groups include all commutative examples (where the counit is given by evaluation at the neutral element in the underlying compact group), Woronowicz's quantum $SU(2)$ \cite{banica-subfactor}, and the quantum permutation group $S_4^+$ on four elements \cite{banica-sym}.

\subsubsection{The fusion algebra}\label{sec:fusion-alg}
Letting $F(\GG)$ denote the formal $\CC$-vector space with basis $\Irred(\GG)$, we obtain a $*$-algebra called the \emph{fusion algebra},  in which the sum is induced by the {direct sum}, the involution is induced by conjugation $\alpha \mapsto \bar{\alpha}$ and the product is induced by (decomposition of) the tensor product of corepresentations: 
\[
\alpha \cdot \beta :=\sum_{\gamma \in \Irred(\GG)} N_{\alpha, \beta}^\gamma \gamma.
\]
Note that the equivalence class, $e$, of the unit in $C(\GG)$ serves as unit in $F(\GG)$.
In the case where $\GG$ is a classical compact group, we recover the usual fusion algebra, and when $C(\GG)=C^*_\red(\Gamma)$ the fusion algebra agrees with the group algebra $\CC\Gamma$. 
For more examples, see Section \ref{sec:examples}.

\begin{remark}
	Note that if two quantum groups $\GG_1$ and $\GG_2$ are monoidally equivalent (see e.g. \cite[Definition 2.3.8]{NeshveyevTusetBook}), then the equivalence induces a bijection between $\Irred (\GG_1)$ and $\Irred(\GG_2)$ in such a way that the vector space isomorphism $F(\GG_1) \cong F(\GG_2)$ is a $*$-homomorphism. 
\end{remark}

\subsubsection{The algebra of central functions}
In general, $\sigma\circ \Delta \neq \Delta$ which leads one to study the \emph{central functions} on $\GG$ defined, at the von Neumann algebraic level, as
\[
L_z^\infty(\GG):=\{a\in L^\infty(\GG) \mid \sigma(\Delta(a))=\Delta(a)\}.
\]
Similarly, one puts $C_z(\GG):=C(\GG)\cap L_z^\infty(\GG) $ and $\Pol_z(\GG):=\Pol(\GG)\cap L_z^\infty(\GG)$. In the case of a classical compact group $G$, the algebra $C_z(G)$ consists of functions invariant under the conjugation action; i.e.~the so-called class functions.  \\
The character map $\chi\colon F(\GG) \to \Pol_z(\GG)$, given by $\chi(\alpha)=\sum_{i=1}^{d_\alpha} u^\alpha_{ii}$, extends by linearity to a unital $*$-isomorphism 
\cite[Proposition 3.23 and Theorem 3.24]{BanicaIntro2022},
and in this way we may view $F(\GG)$ as a subalgebra of $\Pol(\GG)$ whenever convenient.  Moreover, it holds that $\Pol_z(\GG)$ is dense in $C_z(\GG)$  and $L_z^\infty(\GG)$ in the norm- and strong operator topology, respectively, as was  shown in \cite{Crann:CaracterDensity}.   One has an $h$-preserving conditional expectation $E\colon C(\GG)\to C_z(\GG)$ exactly when $\GG$ is of Kac type (i.e.~when $h$ is a trace) \cite[Lemma 6.3]{Wang:lacunary}, and this conditional expectation will play a key role in the proof of Theorem \ref{introthm:lifting}.

\subsubsection{The dual}
Associated with $\GG$ is  its dual \emph{discrete quantum group}, $\hat{\GG}$, \cite[Section 3.3]{Timmermann-book}, which also allows for a Hopf-algebraic, a $C^*$-algebraic and a von Neumann algebraic description. We denote the three algebras as
\[
c_c({\hat{\GG}}) \subset c_0(\hat{\GG}) \subset \ell^\infty(\hat{\GG}) \subset \BB(L^2({\GG})),
\]
and note that when $\GG$ is a classical, compact, \emph{abelian} group then the algebras above agree with those functions on the (discrete) Pontryagin dual $\hat{G}$, that are compactly supported, vanishing at infinity, and bounded, respectively. This provides a full generalisation of Pontryagin duality, in that $\hat{\hat{\GG}}\simeq \GG$. Moreover, when starting with a (potentially non-abelian) discrete group $\Gamma$, the $C^*$-algebraic discrete quantum group  $ c_0(\Gamma)$ is exactly the dual of $C^*_\red(\Gamma)$. \\
The multiplicative unitaries, $W$ and $V$, introduced above are closely related to the dual pair $(\GG, \hat{\GG})$, {as one has that}
\begin{align}\label{eq:mult-unit}
W\in L^\infty(\GG) \bar{\otimes} \ell^\infty(\hat{\GG}) \quad \text{and} \quad V\in \ell^\infty(\hat{\GG})'\bar{\tens} L^\infty(\GG)   
\end{align}

\begin{remark}
The main  focus in the present paper is on quantum metric structures on the $C^*$-algebra $C(\GG)$. If $C(\GG)$ admits the structure of a compact quantum metric space, then $C(\GG)$ is necessarily separable, as is seen from the following standard argument:
 if the state space $\S(C(\GG))$ is 
 metrized, 
 it is itself separable and hence so is $C(\S(C(\GG)))$. The positive elements $C(\GG)_+$ embed isometrically into $C(\S(C(\GG)))$ as evaluation functionals, so also $C(\GG)_+$  is separable. Since every element in $C(\GG)$ is a linear combination of four positive elements, $C(\GG)$  is separable.
We will therefore enforce separability of $C(\GG)$ as a standing  assumption throughout the rest of the paper, without further mention. Note that this implies that $L^2(\GG)$ is a separable Hilbert space and that $\Irred(\GG)$ is at most countable.

\end{remark}

\section{Dirac operators from length functions}
This section is devoted to a study of the spectral triples arising from a length function on a compact quantum group. Length functions on classical discrete  groups  have played a prominent role in non-commutative geometry since Connes' seminal paper \cite{Con:CFH}, and have also been studied in detail from the quantum metric point of view in \cite{OzawaRieffel2005, ChristRieffel2017, Rie:GrpCstar}.
Length functions have also   appeared in the quantum group literature  in connection with  the rapid decay  property \cite{Vergnioux:rapid, BVZ:rapid-decay}. 
We recall the definition here:

\begin{dfn}\label{def:length-function}
Let $\GG$ be a compact quantum group. A {\emph{length function}} on $\GG$ is a function $\ell\colon \Irred(\GG) \to [0,\infty)$ such that

\begin{itemize}
\item[(i)] $\ell(e)=0$.
\item[(ii)] $\ell(\bar{\alpha})=\ell(\alpha)$ for all $\alpha\in \Irred(\GG)$.
\item[(iii)] $\ell(\gamma)\leq \ell(\alpha)+\ell(\beta)$ for all $\alpha, \beta,\gamma\in \Irred(\GG)$ such that $\gamma$ is equivalent to a sub-corepresentation of $\alpha\boxtimes \beta$.
\end{itemize}
A length function $\ell$ is called \emph{proper} if $\ell^{-1}([0,R])$ is finite for all $R\geq 0$ and $\ell(\alpha)=0$ only when $\alpha=e$.
\end{dfn}

The standard example of a length function, and indeed the only one for which we will provide examples in Section \ref{sec:examples}, arises when $\Irred(\GG)$ is finitely generated.  This, by definition, means that there exists a finite set $S\subset \Irred(\GG)$ such that every $\alpha\in \Irred(\GG)$ is equivalent to a sub-corepresentation in a tensor product of elements from $S$. Setting $\ell(\alpha)$ equal to the smallest integer $k$ such that $\alpha$ is equivalent to a sub-corepresentation of a tensor product of $k$ elements from $S$ provides a length function on $\GG$, commonly referred to as the \emph{word length function} (with respect to $S$). In Section \ref{sec:examples}, our primary interest will be in the situation where $S$ can be chosen to consist of single element $u$ (this is for instance the case for quantum $SU(2)$), and in this situation $\ell(\alpha)$ is simply the smallest $k$ for which $\alpha$ appears in the decomposition of the tensor power $u^{\boxtimes k}$.  For this reason, and to formally conform with the setup in \cite{OzawaRieffel2005}, we will now restrict to proper length functions with values in $\NN_0$.

From a proper length function, one obtains an unbounded operator $\mathcal{D}_\ell\colon \Lambda(\Pol(\GG))\to L^2(\GG)$  by setting
\begin{align}\label{eq:Dirac-defi}
\mathcal{D}_\ell(\Lambda(u_{ij}^\alpha)):=\ell(\alpha)\cdot\Lambda(u_{ij}^\alpha).
\end{align}
As the set {$\{\tfrac{1}{\|\Lambda(u_{ij}^\alpha)\|_2}\Lambda(u^\alpha_{ij})\}$} forms an orthonormal basis for $L^2(\GG)$, this defines an essentially self-adjoint operator and we denote its self-adjoint closure by $D_\ell$. In analogy with the
case of classical discrete groups, one obtains a non-commutative geometry in this way. 
Note that the following result is stated only for word length functions in \cite[Lemma 5.4]{BVZ:rapid-decay}, but their proof only depends on  \cite[Lemma 1.1]{OzawaRieffel2005}, so it also holds true in our setting.
\begin{lemma}[{\cite[Lemma 5.4]{BVZ:rapid-decay}}]
The data $(\Pol(\GG),L^2(\GG), D_\ell)$ forms a spectral triple.
\end{lemma}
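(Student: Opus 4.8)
The plan is to verify the two defining properties of a spectral triple for the data $(\Pol(\GG), L^2(\GG), D_\ell)$: that (i) every $a\in \Pol(\GG)$ preserves $\dom(D_\ell)$ and has bounded commutator $[D_\ell,a]$, and that (ii) $D_\ell$ has compact resolvent. The backbone of everything is the Peter--Weyl decomposition $L^2(\GG)=\bigoplus_{\alpha\in \Irred(\GG)} H_\alpha$, where $H_\alpha:=\Lambda\big(\text{span}\{u_{ij}^\alpha \mid 1\leq i,j\leq d_\alpha\}\big)$ is the finite-dimensional space of matrix coefficients of $\alpha$, together with the observation that $D_\ell$ acts as the scalar $\ell(\alpha)$ on $H_\alpha$. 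Thus $\Lambda(\Pol(\GG))=\bigoplus_\alpha H_\alpha$ (algebraic direct sum) is a core for $D_\ell$ on which all computations can be carried out.

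For (ii) I would argue directly from the spectral picture. The spectrum of $D_\ell$ is $\{\ell(\alpha)\mid \alpha\in \Irred(\GG)\}$, and the eigenspace attached to a value $R$ is $\bigoplus_{\alpha:\,\ell(\alpha)=R} H_\alpha$. Properness of $\ell$ guarantees that $\ell^{-1}(\{R\})\subseteq \ell^{-1}([0,R])$ is finite, so each eigenspace is finite-dimensional, and that $\ell(\alpha)\to\infty$. By the standard criterion for self-adjoint operators, these two facts are exactly equivalent to the compactness of $(D_\ell\pm i)^{-1}$.

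The heart of the matter is property (i), where the key structural input is the interaction between multiplication in $\Pol(\GG)$ and the $\ell$-grading. Since the matrix coefficients of $\beta\boxtimes\alpha$ are precisely the products $u_{kl}^\beta u_{ij}^\alpha$, left multiplication $\lambda(u_{kl}^\beta)$ sends $H_\alpha$ into $\bigoplus_{\gamma\subseteq \beta\boxtimes\alpha} H_\gamma$. I would then establish the two-sided estimate $|\ell(\gamma)-\ell(\alpha)|\leq \ell(\beta)$ whenever $\gamma$ occurs in $\beta\boxtimes\alpha$: the upper bound $\ell(\gamma)\leq \ell(\alpha)+\ell(\beta)$ is axiom (iii), while the lower bound follows from applying (iii) to $\alpha$, using Frobenius reciprocity ($\gamma\subseteq \beta\boxtimes\alpha \Leftrightarrow \alpha\subseteq \bar\beta\boxtimes\gamma$) together with $\ell(\bar\beta)=\ell(\beta)$ from axiom (ii). Writing $\lambda(u_{kl}^\beta)\xi=\sum_\gamma \eta_\gamma$ for $\xi\in H_\alpha$, one has $[D_\ell,\lambda(u_{kl}^\beta)]\xi=\sum_\gamma(\ell(\gamma)-\ell(\alpha))\eta_\gamma$ with the $\eta_\gamma$ mutually orthogonal, so the estimate yields $\|[D_\ell,\lambda(u_{kl}^\beta)]\xi\|\leq \ell(\beta)\|\xi\|$ on the core. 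Linearity in $a$ then produces a bounded commutator for every $a\in \Pol(\GG)$.

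Finally, to upgrade a bounded commutator on the core to genuine preservation of $\dom(D_\ell)$ and a bounded extension of $[D_\ell,a]$ on all of $L^2(\GG)$, I would invoke the standard closed-operator argument underlying \cite[Lemma 1.1]{OzawaRieffel2005}: given $\xi\in \dom(D_\ell)$, choose $\xi_n\in \Lambda(\Pol(\GG))$ with $\xi_n\to \xi$ and $D_\ell\xi_n\to D_\ell\xi$; then $a\xi_n\to a\xi$ while $D_\ell(a\xi_n)=aD_\ell\xi_n+[D_\ell,a]\xi_n$ converges because $[D_\ell,a]$ is bounded on the core, and closedness of $D_\ell$ forces $a\xi\in \dom(D_\ell)$ with the expected formula. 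The main obstacle --- indeed the only non-formal point --- is the two-sided length estimate, since axiom (iii) supplies the upper bound only; the reverse bound is the exact quantum analogue of the classical inequality $|\ell(gx)-\ell(x)|\leq \ell(g)$, and it hinges on combining the conjugation-symmetry axiom (ii) with Frobenius reciprocity.
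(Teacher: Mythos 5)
Your overall strategy is sound and in fact reconstructs what the paper only cites: the paper gives no self-contained proof, but refers to \cite[Lemma 5.4]{BVZ:rapid-decay}, noting that the argument there rests solely on \cite[Lemma 1.1]{OzawaRieffel2005}, i.e.\ on the fact that the subspaces $A_n=\mathrm{span}\{u^\alpha_{ij}\mid \ell(\alpha)\leq n\}$ form a filtration of $\Pol(\GG)$ with $A_n^*=A_n$ and $A_m\cdot A_n\subseteq A_{m+n}$. Your two-sided estimate $|\ell(\gamma)-\ell(\alpha)|\leq \ell(\beta)$ for $\gamma\subseteq\beta\boxtimes\alpha$, obtained from axiom (iii), Frobenius reciprocity and axiom (ii), is exactly the representation-theoretic content of that filtration property, and your treatment of the compact resolvent and of the closedness argument upgrading the commutator bound to domain preservation are both correct.

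However, there is a genuine gap at the decisive step. You derive $\|[D_\ell,\lambda(u^\beta_{kl})]\xi\|\leq\ell(\beta)\|\xi\|$ only for $\xi$ lying in a \emph{single} isotypic component $H_\alpha$, where the components $\eta_\gamma$ are mutually orthogonal, and then assert the estimate ``on the core''. This does not follow: for a general $\xi=\sum_\alpha\xi_\alpha$ in $\Lambda(\Pol(\GG))$, the vectors $[D_\ell,\lambda(u^\beta_{kl})]\xi_\alpha$ for distinct $\alpha$ are \emph{not} mutually orthogonal, since distinct $H_\alpha$ can be mapped into the same $H_\gamma$, so the per-component estimates cannot simply be assembled; an operator that is uniformly bounded on each summand of an orthogonal decomposition need not be bounded on the algebraic direct sum. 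The missing ingredient is the band (finite-propagation) argument, which is precisely the content of \cite[Lemma 1.1]{OzawaRieffel2005}. Concretely, with $T=[D_\ell,\lambda(u^\beta_{kl})]$ and $Q_n$ the spectral projection of $D_\ell$ for the eigenvalue $n\in\NN_0$ (here the paper's standing restriction to $\NN_0$-valued length functions is used), your two-sided estimate shows that $Q_mTQ_n=0$ whenever $|m-n|>\ell(\beta)$, and the same computation as yours, run over the whole eigenspace $Q_nL^2(\GG)$ rather than a single $H_\alpha$, gives $\|TQ_n\|\leq\ell(\beta)$ for every $n$ (this uses $\|u^\beta_{kl}\|\leq 1$, which you also left implicit). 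Writing $T=\sum_{|k|\leq\ell(\beta)}T_k$ with $T_k=\sum_n Q_{n+k}TQ_n$, each $T_k$ has pairwise orthogonal domains and pairwise orthogonal ranges, whence $\|T_k\|\leq\sup_n\|Q_{n+k}TQ_n\|\leq\ell(\beta)$ and therefore $\|T\|\leq(2\ell(\beta)+1)\,\ell(\beta)$. This yields the required boundedness, albeit with a worse constant than the $\ell(\beta)$ you claim (which your argument does not establish and which is not needed for the lemma); with this repair, the rest of your proof goes through.
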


As already mentioned in the introduction, the main purpose of the present paper is to investigate the quantum metric structure associated with the commutator seminorm arising from this construction. This idea was already investigated in \cite{BVZ:rapid-decay}, where the authors prove that in the presence of the {rapid decay property}, one does indeed obtain a compact quantum metric structure by taking \emph{iterated} commutators with $D_\ell$, where the number of iterations depends on the degree of rapid decay. 
To the best of our knowledge,  \cite[Theorem 7.4]{BVZ:rapid-decay}  is the only positive result in this direction, but even for reasonably tame examples, such as  $SU_q(2)$, their result seems to require taking at least two commutators before yielding a Lip-norm. In particular,  there are no known examples of length functions on (genuine) quantum groups for which the basic commutator seminorm gives a compact quantum metric structure. 
Iterated commutator seminorms have the drawback that they will rarely satisfy the Leibniz rule, thus straying from the original noncommutative geometric motivation behind the problem. The aim in the present paper is to provide the first examples for which a single commutator suffices; see Corollary \ref{introcor:examples}. 
To this end, the fusion algebra will play a central role and in the following section we will show how one may also base a spectral triple on this algebra.

\subsection{A spectral triple for the fusion algebra}\label{sec:spec-trip-fusion}
Consider again a compact quantum group $\GG$ and its fusion algebra $F(\GG)=\T{span}_{\CC}\{\alpha \mid \alpha\in \Irred(\GG)\}$. We endow $F(\GG)$ with the inner product for which the elements in $\Irred(\GG)$ form an orthonormal basis, and denote the completion by $\ell^2(\Irred(\GG))$, and by $c_c(\Irred(\GG))$ the dense subspace spanned by the orthonormal basis; i.e.~the image of $F(\GG)$ in $\ell^2(\Irred(\GG))$. For each $x\in F(\GG)$, we therefore obtain a linear map ${\pi}_0(x)\colon c_c(\Irred(\GG)) \to c_c(\Irred(\GG))$ which multiplies from the left with $x$, and we now show that this map extends boundedly to the Hilbert space $\ell^2(\Irred(\GG))$.  Recall that $\lambda$ denotes the GNS representation of $C(\GG)$ on $L^2(\GG)$.
 
 \begin{lemma}\label{lem:intertwining-lemma}
 The character map $\chi \colon c_c(\Irred(\GG)) \to \Lambda(\Pol_z(\GG))$, $\alpha\mapsto \sum_{i=1}^{d_\alpha} \Lambda(u_{ii}^\alpha)$, extends to a unitary $\tilde{\chi}\colon \ell^2(\Irred(\GG))\to L^2_z(\GG):= \overline{\Lambda(\Pol_z((\GG))}\subset L^2(\GG)$. For $x\in F(\GG)$, it holds that
  \[
{\pi}_0(x) =    \tilde{\chi} ^*\lambda(\chi(x))\tilde{\chi},
 \]
  as operators on $c_c(\Irred(\GG))$,  and ${\pi}_0(x)$ therefore extends to a bounded operator  ${\pi}(x)$ on $\ell^2(\Irred(\GG))$ which equals
  $\tilde{\chi}^*\lambda(\chi(x))\tilde{\chi}$.
 \end{lemma}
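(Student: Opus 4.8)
The plan is to reduce everything to a direct computation on the orthonormal basis $\Irred(\GG)$ of $\ell^2(\Irred(\GG))$, exploiting that $\chi\colon F(\GG)\to \Pol_z(\GG)$ is already known to be a unital $*$-isomorphism. First I would establish the unitarity of $\tilde{\chi}$ by showing that the vectors $\Lambda(\chi(\alpha))$, $\alpha\in\Irred(\GG)$, form an orthonormal system in $L^2(\GG)$. Since the GNS inner product is $\langle \Lambda(a),\Lambda(b)\rangle = h(a^*b)$ and $\chi$ is a $*$-homomorphism with $\chi(\alpha)^*=\chi(\bar\alpha)$ and $\chi(\alpha)\chi(\beta)=\chi(\alpha\cdot\beta)=\sum_\gamma N_{\alpha,\beta}^\gamma\chi(\gamma)$, one computes
\[
\langle \Lambda(\chi(\alpha)),\Lambda(\chi(\beta))\rangle = h\big(\chi(\bar\alpha)\chi(\beta)\big) = \sum_{\gamma} N_{\bar\alpha,\beta}^\gamma\, h(\chi(\gamma)).
\]
Now $h(\chi(\gamma))$ equals the dimension of the fixed space of $\gamma$, i.e.\ $\dim\operatorname{Hom}(e,\gamma)=\delta_{\gamma,e}$ for irreducible $\gamma$ (a consequence of the Schur orthogonality relations, using $u^e=1$), so the sum collapses to $N_{\bar\alpha,\beta}^e=\dim\operatorname{Hom}(e,\bar\alpha\boxtimes\beta)$. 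By Frobenius reciprocity in the rigid tensor category of corepresentations this equals $\dim\operatorname{Hom}(\alpha,\beta)=\delta_{\alpha,\beta}$. Hence $\{\Lambda(\chi(\alpha))\}$ is orthonormal, and since its linear span is $\Lambda(\Pol_z(\GG))$, which is dense in $L^2_z(\GG)$, the assignment $\alpha\mapsto\Lambda(\chi(\alpha))$ extends to a unitary $\tilde{\chi}\colon \ell^2(\Irred(\GG))\to L^2_z(\GG)$.

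Next I would verify the intertwining identity. By bilinearity it suffices to test $\pi_0(x)=\tilde{\chi}^*\lambda(\chi(x))\tilde{\chi}$ for $x=\alpha$ on a basis vector $\beta$. Using the defining relation $\lambda(a)\Lambda(b)=\Lambda(ab)$ of the GNS representation together with the homomorphism property of $\chi$, one gets
\[
\tilde{\chi}^*\lambda(\chi(\alpha))\tilde{\chi}(\beta) = \tilde{\chi}^*\Lambda\big(\chi(\alpha)\chi(\beta)\big) = \tilde{\chi}^*\Lambda\big(\chi(\alpha\cdot\beta)\big) = \alpha\cdot\beta = \pi_0(\alpha)(\beta),
\]
where the penultimate equality uses that $\chi(\alpha\cdot\beta)\in\Pol_z(\GG)$ (so that $\Lambda(\chi(\alpha\cdot\beta))$ lies in $L^2_z(\GG)$ and is returned to $\alpha\cdot\beta$ by $\tilde{\chi}^*$). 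This is exactly the claimed identity on $c_c(\Irred(\GG))$; note also that $\lambda(\chi(x))$ preserves $L^2_z(\GG)$, since $\Pol_z(\GG)$ is a subalgebra, which makes the conjugation by $\tilde{\chi}$ transparent.

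Finally, for the boundedness statement I would argue that $\chi(x)\in\Pol_z(\GG)\subseteq C(\GG)$, so $\lambda(\chi(x))$ is a bounded operator on $L^2(\GG)$; conjugating it by the unitary $\tilde{\chi}$ (with $\tilde{\chi}^*$ taken as the adjoint of the isometry $\ell^2(\Irred(\GG))\to L^2(\GG)$) yields a bounded operator on $\ell^2(\Irred(\GG))$. As this operator agrees with $\pi_0(x)$ on the dense subspace $c_c(\Irred(\GG))$, it is the unique bounded extension $\pi(x)=\tilde{\chi}^*\lambda(\chi(x))\tilde{\chi}$. The only genuinely non-routine ingredient is the character orthogonality underlying Step~1 — and even this is standard once $\chi$ is known to be a $*$-isomorphism and one invokes that the Haar state counts fixed vectors; the intertwining and boundedness are then pure bookkeeping.
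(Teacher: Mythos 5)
Your proof is correct, and its overall skeleton (orthonormality of the characters, unitary extension, intertwining checked on basis vectors, bounded extension) matches the paper's; indeed your intertwining and boundedness steps coincide essentially verbatim with the paper's computation $\tilde{\chi}\pi_0(\beta)(\alpha)=\Lambda(\chi(\beta\cdot\alpha))=\Lambda(\chi(\beta)\chi(\alpha))=\lambda(\chi(\beta))\tilde{\chi}(\alpha)$. Where you genuinely diverge is the orthonormality step. The paper gets $\inn{\chi(\alpha),\chi(\beta)}=\delta_{\alpha,\beta}$ in one line from the Schur orthogonality relations for matrix coefficients (citing Timmermann, Proposition 5.3.8(iii)), via $\sum_{i,j}h\big((u^\alpha_{ii})^*u^\beta_{jj}\big)=\delta_{\alpha,\beta}$. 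You instead work entirely inside the fusion algebra, using that $\chi$ is a $*$-isomorphism, the identity $h(\chi(\gamma))=\delta_{\gamma,e}$, and Frobenius reciprocity $N^e_{\bar\alpha,\beta}=\dim\operatorname{Hom}(\alpha,\beta)=\delta_{\alpha,\beta}$. Both are valid. Your route has the mild advantage of sidestepping the modular ($F$-matrix) form that the orthogonality relations take for non-Kac $\GG$ --- the only matrix-coefficient input you need is the weak statement that $h$ annihilates the characters of nontrivial irreducibles --- and it is the same fusion-theoretic reasoning the paper itself employs in Remark \ref{rem:GNS-bd} to see that $\tau=h\circ\chi$ is the trace $\alpha\mapsto\delta_{\alpha,e}$; the paper's route is shorter given the cited reference. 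One cosmetic point: in your last step you pass between viewing $\tilde{\chi}$ as a unitary onto $L^2_z(\GG)$ and as an isometry into $L^2(\GG)$; this is harmless precisely because, as you note, $\lambda(\chi(x))$ preserves $L^2_z(\GG)$, so the two readings of $\tilde{\chi}^*\lambda(\chi(x))\tilde{\chi}$ agree.
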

 \begin{proof}
By definition, the image of $\chi$ is a dense subspace in $L^2_z(\GG)$. Moreover,  by the Schur orthogonality relations \cite[Proposition 5.3.8 (iii)]{Timmermann-book}, we obtain that 
\[
\inn{ \chi({\alpha}), \chi(\beta) }=\sum_{i=1}^{d_{\alpha}}\sum_{j=1}^{d_\beta} h((u_{ii}^\alpha)^* u_{jj}^\beta))=\delta_{\alpha,\beta} \sum_{i=1}^{d_\alpha} h( (u_{ii}^\alpha)^* u_{ii}^\alpha)=\delta_{\alpha, \beta}.
\]
 In other words, $\chi$ maps the orthonormal basis onto an orthonormal  and spanning subset of $L^2_z(\GG)$ and hence extends to a unitary operator as claimed.  For the intertwining statement, just note that
for $\alpha,\beta\in \Irred(\GG)$ one has
\[
\tilde{\chi} {\pi}_0(\beta) (\alpha)= \Lambda(\chi(\beta \cdot \alpha))=\Lambda(\chi(\beta)\chi(\alpha))=\lambda(\chi(\beta))\tilde{\chi}(\alpha),
\]
from which the general case follows by linearity. 
 \end{proof}
 
 \begin{remark}\label{rem:GNS-bd}
 The Haar state, $h$, pulls back to a positive functional $\tau\colon F(\GG) \to \CC$ via the character map $\chi$, which on the basis $\Irred(\GG)$ is given by $\tau(\alpha)=\delta_{\alpha,e}$. By Frobenius reciprocity, the map $\tau$ is a trace and hence so is the restriction of $h$ to $\Pol_z(\GG)$.  Also note that the representation ${\pi}$ constructed above agrees with the GNS representation arising from $\tau$,
  and the latter therefore, in particular, gives rise to bounded operators; see also \cite{Hiai-Izumi} for a more general approach to this.
  \end{remark}
   We now furthermore assume that $\GG$ is endowed with a proper length function $\ell\colon \Irred(\GG) \to \NN_0$. 
  Setting
$A_n:=\T{span}\{\alpha \mid \ell(\alpha)\leq n\}$ provides a filtration of $F(\GG)$, in the sense that $A_0 = \CC\cdot e$, $A_n^*=A_n$ and $A_n\cdot A_m\subseteq A_{n+m}$, which follows directly from the defining properties of $\ell$.
 These comments, together with Remark \ref{rem:GNS-bd}, show that the fusion algebra $F(\GG)$ falls within the class of filtered $*$-algebras studied by Ozawa and Rieffel in  \cite{OzawaRieffel2005}.\\
The length function  also defines an unbounded operator $\tilde{\mathcal{D}}_\ell\colon c_c(\Irred(\GG)) \to \ell^2(\Irred(\GG))$ given by $\tilde{\mathcal{D}}_\ell(\alpha)=\ell(\alpha)\cdot \alpha$.
 This operator is essentially self-adjoint, and we denote its self-adjoint closure by $\tilde{{D}}_\ell$.
 \begin{lemma}
 Let $\GG$ be a compact quantum group equipped with a proper length function $\ell$. Then 
 the
 data $\big(F(\GG), \ell^2(\Irred(\GG)), \tilde{D}_{\ell}\big)$ is a spectral triple.
 \end{lemma}
 \begin{proof}
As noted above, our setup is compatible with that studied   in \cite{OzawaRieffel2005} and the fact that $\tilde{D}_\ell$ has bounded commutators with elements from $F(\GG)$ therefore follows from  \cite[Lemma 1.1]{OzawaRieffel2005}. That $\tilde{D}_\ell$ has compact resolvent is trivial if $\Irred(\GG)$ is finite and follows from the properness of $\ell$ when $\Irred(\GG)$ is infinite.
 \end{proof}

 By construction,  the unbounded operator  $\mathcal{D}_\ell$  on $L^2(\GG)$, defined in \eqref{eq:Dirac-defi}, preserves $\Lambda(\Pol_z(\GG))$. Moreover, the restriction $\mathcal{D}_\ell^z$, of $\mathcal{D}_\ell$ to $\Lambda(\Pol_z(\GG))$  is essentially self-adjoint, and we denote its self-adjoint closure by ${D}_\ell^z$. 
 In this way, $(\Pol_z(\GG), L^2_z(\GG), D_{\ell}^z)$ becomes a spectral triple 
  and we have:
 \begin{prop}\label{prop:unitarily-equivalent}
 The spectral triples  $\big(F(\GG), \ell^2(\Irred(\GG)),\tilde{D}_{\ell} \big)$ and $\big(\Pol_z(\GG), L^2_z(\GG),D_{\ell}^z\big)$ are unitarily equivalent via the unitary $\tilde{\chi}$. In particular,  if one is a spectral metric space, then so is the other. 
 \end{prop}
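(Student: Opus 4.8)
The plan is to verify directly the two defining conditions for unitary equivalence of spectral triples with respect to $\tilde{\chi}$—that $\tilde{\chi}$ intertwines the representations and the Dirac operators—and then to read off the transfer of the spectral metric space property as a formal consequence of conjugation by a unitary.

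The intertwining of the representations is essentially already in hand. Lemma \ref{lem:intertwining-lemma} provides $\pi(x) = \tilde{\chi}^*\,\lambda(\chi(x))\,\tilde{\chi}$ for all $x \in F(\GG)$, equivalently $\tilde{\chi}\,\pi(x)\,\tilde{\chi}^* = \lambda(\chi(x))$. Since $\chi\colon F(\GG) \to \Pol_z(\GG)$ is the underlying $*$-isomorphism (as recorded via \cite[Proposition 3.2.14]{Timmermann-book}), this is exactly the required compatibility of the two representations. I would also note here that this same identity shows $\lambda(\Pol_z(\GG))$ preserves $L^2_z(\GG)$, so that the restricted representation occurring in the triple $\big(\Pol_z(\GG), L^2_z(\GG), D_\ell^z\big)$ is well-defined.

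Next I would intertwine the Dirac operators on their common core. For a basis vector $\alpha \in c_c(\Irred(\GG))$ one has $\tilde{\chi}(\alpha) = \chi(\alpha) = \sum_{i} \Lambda(u_{ii}^\alpha)$, and since $\mathcal{D}_\ell$ multiplies each $\Lambda(u_{ij}^\alpha)$ by $\ell(\alpha)$, a one-line computation gives
\[
\tilde{\chi}\,\tilde{\mathcal{D}}_\ell(\alpha) = \ell(\alpha)\,\chi(\alpha) = \mathcal{D}_\ell^z\big(\chi(\alpha)\big) = \mathcal{D}_\ell^z\,\tilde{\chi}(\alpha).
\]
Because $\chi$ is a $*$-isomorphism onto $\Pol_z(\GG)$, the unitary $\tilde{\chi}$ carries the core $c_c(\Irred(\GG))$ of $\tilde{D}_\ell$ bijectively onto the core $\Lambda(\Pol_z(\GG))$ of $D_\ell^z$. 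Hence $\tilde{\chi}\,\tilde{D}_\ell\,\tilde{\chi}^*$ is self-adjoint and agrees with $\mathcal{D}_\ell^z$ on this core, so by essential self-adjointness its closure is $D_\ell^z$, giving $\tilde{\chi}\,\tilde{D}_\ell\,\tilde{\chi}^* = D_\ell^z$. The one point requiring genuine care is precisely this passage from the symmetric operators on the core to their self-adjoint closures; it is the main (though routine) obstacle, and it is legitimate exactly because $\tilde{\mathcal{D}}_\ell$ and $\mathcal{D}_\ell^z$ have each been shown to be essentially self-adjoint and $\tilde{\chi}$ matches their cores.

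With both intertwining relations established, the two triples are unitarily equivalent via $\tilde{\chi}$. For the final assertion, conjugation by $\tilde{\chi}$ yields, for every $x \in F(\GG)$,
\[
[\tilde{D}_\ell, \pi(x)] = \tilde{\chi}^*\,[D_\ell^z, \lambda(\chi(x))]\,\tilde{\chi},
\]
so the commutator seminorms satisfy $L_{\tilde{D}_\ell}(x) = L_{D_\ell^z}(\chi(x))$; moreover $\operatorname{Ad}\tilde{\chi}$ is an isometric unital $*$-isomorphism from the norm-closed operator system generated by $\pi(F(\GG))$ onto that generated by $\lambda(\Pol_z(\GG))$ acting on $L^2_z(\GG)$. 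As a Lip-norm is a property of the operator system together with its slip-norm, and both are transported isometrically by $\tilde{\chi}$, one triple is a spectral metric space if and only if the other is.
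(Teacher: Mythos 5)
Your proposal is correct and follows essentially the same route as the paper: the representation intertwining is exactly what Lemma \ref{lem:intertwining-lemma} provides, the Dirac operators are intertwined by the same one-line computation on the cores $c_c(\Irred(\GG))$ and $\Lambda(\Pol_z(\GG))$, and the passage to the self-adjoint closures via essential self-adjointness is precisely what the paper summarizes as ``from this it easily follows that $\tilde{\chi}(\operatorname{Dom}(\tilde{D}_\ell))\subset \operatorname{Dom}(D^z_\ell)$ and $\tilde{\chi}\tilde{D}_\ell=D_\ell^z \tilde{\chi}$.'' Your write-up is merely more explicit about that closure argument and about why unitary equivalence transports the spectral metric space property, both of which the paper leaves implicit.
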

 \begin{proof}
 Note that $\tilde{\chi}$ maps $c_c(\Irred(\GG))$ onto $\Lambda(\Pol_z(\GG))$, and
for $\alpha\in \Irred(\GG)$ we have
 \[
 \tilde{\chi} \tilde{D}_{\ell} ({\alpha})=  \ell({\alpha}) \Lambda({\chi(\alpha)})= D_{\ell}^z \tilde{\chi}(\alpha),
 \]
so that $\tilde{\chi}$ intertwines  the two Dirac operators on their cores.  From this it easily follows that $\tilde{\chi}(\T{Dom}(\tilde{D}_\ell))\subset \T{Dom}(D^z_\ell)$ and that $\tilde{\chi}\tilde{D}_\ell=D_\ell^z \tilde{\chi}$. 
 \end{proof}
 
 
 \begin{cor}\label{cor:fusion-vs-central}
 If   $\big(F(\GG), \ell^2(\Irred(\GG)),\tilde{D}_{\ell} \big)$ is a spectral metric space then the restriction of $L_\ell$ provides $C_z(\GG)$ with the structure of a compact quantum metric space.
 \end{cor}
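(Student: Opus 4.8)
The plan is to transport the hypothesis through the unitary equivalence of Proposition \ref{prop:unitarily-equivalent}, to compare the resulting commutator seminorm with the restriction of $L_\ell$, and then to invoke Rieffel's criterion. First I would observe that if $\big(F(\GG),\ell^2(\Irred(\GG)),\tilde{D}_\ell\big)$ is a spectral metric space, then Proposition \ref{prop:unitarily-equivalent} immediately yields that $\big(\Pol_z(\GG),L^2_z(\GG),D^z_\ell\big)$ is a spectral metric space as well; equivalently, the commutator seminorm $L_{D^z_\ell}$ is a Lip-norm, so that $(C_z(\GG),L_{D^z_\ell})$ is a compact quantum metric space. The goal is then to upgrade this to the statement for the restriction of the ambient seminorm $L_\ell$.

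The key step is the pointwise comparison $L_{D^z_\ell}\leq L_\ell$ on $C_z(\GG)$. To establish it, I would first note that $L^2_z(\GG)$ is invariant under $D_\ell$: by Lemma \ref{lem:intertwining-lemma} the vectors $\Lambda(\chi(\alpha))=\sum_i\Lambda(u^\alpha_{ii})$ span $L^2_z(\GG)$, and each is an eigenvector of $D_\ell$ with eigenvalue $\ell(\alpha)$, so that $D^z_\ell$ is precisely the restriction of $D_\ell$ to this subspace. Second, $L^2_z(\GG)$ is invariant under $\lambda(a)$ for every $a\in C_z(\GG)$, since $C_z(\GG)$ is a subalgebra containing $\Pol_z(\GG)$, whence $\lambda(a)\Lambda(\Pol_z(\GG))\subseteq\Lambda(C_z(\GG))\subseteq L^2_z(\GG)$. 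Consequently the operator $[D_\ell,\lambda(a)]$ leaves $L^2_z(\GG)$ invariant and restricts there to $[D^z_\ell,\lambda(a)|_{L^2_z(\GG)}]$, so that
\[
L_{D^z_\ell}(a)=\big\|[D_\ell,\lambda(a)]|_{L^2_z(\GG)}\big\|\leq \big\|[D_\ell,\lambda(a)]\big\|=L_\ell(a)
\]
for all $a\in C_z(\GG)$, both sides being $\infty$ when the relevant commutator is unbounded.

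Finally I would conclude with Rieffel's criterion, Theorem \ref{thm:Rieffels-criterion}. Since $L_{D^z_\ell}\leq L_\ell$, the unit ball $\{a\in C_z(\GG)\mid L_\ell(a)\leq 1\}$ is contained in $\{a\in C_z(\GG)\mid L_{D^z_\ell}(a)\leq 1\}$; as the latter projects to a totally bounded subset of $C_z(\GG)/\CC$, so does the former, a subset of a totally bounded set being again totally bounded. Theorem \ref{thm:Rieffels-criterion} then gives that $(C_z(\GG),L_\ell)$ is a compact quantum metric space.

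The main obstacle, and the conceptually delicate point, is that one is deducing the Lip-norm property for the \emph{larger} seminorm $L_\ell$ from that of the \emph{smaller} seminorm $L_{D^z_\ell}$, which is the opposite of what one might naively expect. This works only because Rieffel's criterion is phrased in terms of total boundedness of the image of the unit ball, a property inherited by subsets and hence passing from the smaller seminorm to the larger one; the genuine content is therefore the invariance of $L^2_z(\GG)$ under left multiplication by central elements, which is what makes the comparison $L_{D^z_\ell}\leq L_\ell$ available in the first place. One should also keep in mind that $L_\ell$ is a bona fide slip-norm whose Lipschitz elements include the dense subalgebra $\Pol_z(\GG)$, so that $d_{L_\ell}$ genuinely separates states and the criterion applies without degeneracy.
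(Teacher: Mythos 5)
Your proof is correct and follows essentially the same route as the paper's: transfer the spectral metric property to $(\Pol_z(\GG), L^2_z(\GG), D^z_\ell)$ via Proposition \ref{prop:unitarily-equivalent}, establish $\|[D^z_\ell,a]\|\leq L_\ell(a)$ for central elements, and conclude by Rieffel's criterion (Theorem \ref{thm:Rieffels-criterion}) through containment of Lipschitz unit balls. Your invariant-subspace formulation of the key inequality is just a repackaging of the paper's comparison of suprema over $\Lambda(\Pol_z(\GG))_1\subseteq \Lambda(\Pol(\GG))_1$, and the minor domain ambiguity in your write-up (working on all of $C_z(\GG)$ rather than $\Pol_z(\GG)$) disappears under the paper's convention of extending slip-norms by $\infty$ outside $\Pol(\GG)$.
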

 \begin{proof}
 We first note that,  for $a\in \Pol_z(\GG)$, we have
 \begin{align*}
 L_\ell(a) = \|[D_\ell, a]\|
& =\sup\{ \| (D_\ell a -a D_\ell)\xi \| \ \mid \xi\in (\Lambda \Pol(\GG))_1 \}\\
 &\geq  \sup\{ \| (D_\ell a -a D_\ell)\xi \| \ \mid \xi\in (\Lambda \Pol_z(\GG))_1 \}\\
 &=  \sup\{ \| (D_\ell^z a -a D_\ell^z)\xi \| \ \mid \xi\in (\Lambda \Pol_z(\GG))_1 \}= \|[D_\ell^z, a]\|
 \end{align*}
 By assumption, 
 $\big(F(\GG), \ell^2(\Irred(\GG)),\tilde{D}_{\ell} \big)$
 is a spectral metric space and, by Proposition \ref{prop:unitarily-equivalent}, the same is therefore true for $(\Pol_z(\GG), L^2_z(\GG),D_{\ell}^z)$. By Rieffel's criterion  (Theorem \ref{thm:Rieffels-criterion}), 
  this is equivalent to the Lipschitz unit ball $\{a\in \Pol_z(\GG) \mid \|[D_\ell^z, a]\|\leq 1\}$ having pre-compact image in $C_z(\GG)/\CC$. By the inequality established above, 
 \[
 \{a\in \Pol_z(\GG) \mid L_\ell(a)\leq 1\}  \subseteq  \{a\in \Pol_z(\GG) \mid \|[D_\ell^z, a]\|\leq 1\}, 
 \]
 and the former set therefore has pre-compact image in  $C_z(\GG)/\CC$ as well. Applying Rieffel's criterion  again, we conclude that $(C_z(\GG), L_\ell )$ is a compact quantum metric space. 
 \end{proof}

\section{The conjugation coaction}
For a  classical compact group $G$,  the algebra $C_z(G)$ consists of the conjugation invariant functions, i.e.~those  $f\in C(G)$ for which $ f(g^{-1}xg)=f(x)$  for all  $g,x\in G$. The algebra $C_z(G)$ may therefore be viewed as the fixed point algebra for the conjugation action $G\overset{\alpha} \curvearrowright C(G)$, and one obtains a natural conditional expectation $E\colon C(G)\to C_z(G)$  by 
integrating  the action against the Haar probability measure $\mu$ on $G$:
\begin{align}\label{eq:cond-exp-integral}
E(f)=\int_{G} \alpha_g(f) \ d\mu(g)
\end{align}
In this section, we  show how this situation carries over to the setting of compact quantum groups of Kac type, and prove that the  conditional expectation obtained is a contraction for the seminorm $L_\ell$ arising from a proper length function. It is quite likely that most of the results in the present section are known to experts in the field, but since we were unable to find explicit references in the literature, we provide the details for the convenience of the reader. \\

The key tool will be the conjugation coaction studied in \cite{Crann:CaracterDensity, Crann:InnerAmenability}, and we will therefore be following their conventions below.
It is not hard to see that both $W$ and $\Sigma V\Sigma$ are unitary corepresentations, where $W$ and $V$ are the fundamental unitaries from Section \ref{sec:fund-unit}. From \eqref{eq:mult-unit}, it follows that $W_{23}$ and $(\Sigma V\Sigma)_{13}$ commute and hence also 
 $Z:=W\Sigma V\Sigma \in L^\infty(\GG) \bar{\tens} \BB(L^2(\GG)) $ becomes a unitary corepresentation.  The associated coaction $\delta\colon \BB(L^2(\GG)) \to L^\infty(\GG) \bar{\tens} \BB(L^2(\GG)) $,
\[
\delta(T):=Z^*(1\tens T)Z,
\]
is going to be the quantum analogue of the conjugation action on a classical group discussed above, and we will therefore refer to it as the \emph{conjugation coaction}.
We denote by $\BB(L^2(\GG))^{\GG}$ the corresponding \emph{fixed point algebra}
\[
\{T\in \BB(L^2(\GG)) \mid \delta(T)=1\tens T\}.
\]

\begin{lemma}\label{lem:fixed-pt-lem}
For $a\in L^\infty(\GG)$, one has $\delta(a)=1\tens a$ if and only if $a\in L^\infty_z(\GG)$.
\end{lemma}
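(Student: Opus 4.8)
The plan is to unwind the definition of the coaction $\delta(a)=Z^*(1\tens a)Z$ with $Z=W\Sigma V\Sigma$, and show that the fixed-point condition $\delta(a)=1\tens a$ is equivalent to $a$ commuting simultaneously with (the legs of) both multiplicative unitaries in the right way, which in turn should translate into the centrality condition $\sigma(\Delta(a))=\Delta(a)$. The condition $Z^*(1\tens a)Z = 1\tens a$ is the same as $(1\tens a)Z = Z(1\tens a)$, i.e.\ $1\tens a$ commutes with $Z = W\Sigma V\Sigma$.

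First I would rewrite everything in terms of the comultiplication using the implementation formulas from \eqref{eq:implementation-eq}, namely $\Delta(x)=W^*(1\tens x)W = V(x\tens 1)V^*$. The key observation I expect to exploit is that $\Sigma V\Sigma$ implements the ``opposite-side'' comultiplication: conjugating the relation $V(x\tens 1)V^* = \Delta(x)$ by the flip $\Sigma$ gives $(\Sigma V\Sigma)(1\tens x)(\Sigma V\Sigma)^* = \sigma(\Delta(x))$. So $W$ sees $\Delta$ on the second leg and $\Sigma V\Sigma$ sees $\sigma\Delta$ on the second leg. The strategy is then to compute $Z^*(1\tens a)Z$ by inserting these formulas step by step: first conjugate $1\tens a$ by $\Sigma V\Sigma$ to produce $\sigma(\Delta(a))$ (viewed in the appropriate legs), then conjugate the result by $W$. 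I would organize the computation so that the fixed-point equation collapses to the single identity $\sigma(\Delta(a))=\Delta(a)$, which is precisely membership in $L^\infty_z(\GG)$.

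Concretely, I would argue as follows. Using the commutation $[W_{23},(\Sigma V\Sigma)_{13}]=0$ coming from \eqref{eq:mult-unit}, one sees that $W$ and $\Sigma V\Sigma$ act on ``different legs'' relative to the element $1\tens a$, so the two conjugations can be analyzed separately. Conjugating $1\tens a$ (an operator on $L^2(\GG)\hat\tens L^2(\GG)$ living in the second leg) by $\Sigma V\Sigma$ yields $\sigma(\Delta(a))\in L^\infty(\GG)\bar\tens L^\infty(\GG)$, realized on the two legs; subsequently conjugating by $W$ has a calculable effect via $W^*(1\tens\,\cdot\,)W=\Delta(\cdot)$ applied in the remaining leg. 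Carefully tracking which leg each factor occupies, the equation $Z^*(1\tens a)Z = 1\tens a$ reduces to an identity whose only content is $\sigma(\Delta(a))=\Delta(a)$. Conversely, if $a\in L^\infty_z(\GG)$, then $\sigma(\Delta(a))=\Delta(a)$ and running the computation backwards gives $\delta(a)=1\tens a$.

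\textbf{The main obstacle} I anticipate is the bookkeeping of legs and flips: because $Z$ is a product of $W$ and $\Sigma V\Sigma$, and the two unitaries implement $\Delta$ and $\sigma\Delta$ on opposite sides, one must be scrupulous about where $\Sigma$ moves each tensor factor and verify that the cross-terms genuinely cancel rather than merely appearing to. The cleanest route is probably to first establish, as a preliminary identity, that $(\Sigma V\Sigma)^*(1\tens a)(\Sigma V\Sigma)=\sigma(\Delta(a))$ and that conjugating a product $a_{(1)}\tens a_{(2)}$ by $W$ returns it to a single leg precisely when the two legs already agree after flipping---i.e.\ exactly the central case. An alternative, possibly more transparent, approach would avoid explicit leg-chasing by testing both operators $\delta(a)$ and $1\tens a$ against the dense domain $\Lambda\tens\Lambda\big(C(\GG)\odot C(\GG)\big)$ using the defining formulas for $W^*$ and $V$ directly; this reduces the claim to the Hopf-algebraic identity $\sigma\Delta=\Delta$ on $\Pol(\GG)$ and then passes to the strong/normal closure, using that $\Pol_z(\GG)$ is strongly dense in $L^\infty_z(\GG)$ as recorded after the discussion of the fusion algebra.
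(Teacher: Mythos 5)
Your proposal is correct and follows essentially the same route as the paper: the paper's proof is exactly the short chain of equivalences
$\delta(a)=1\tens a \Longleftrightarrow \Delta(a)=\Sigma V(a\tens 1)V^*\Sigma \Longleftrightarrow \Delta(a)=\sigma\Delta(a)$,
obtained by rearranging the fixed-point equation and applying the implementation formulas \eqref{eq:implementation-eq} together with the flip, which is precisely your main computation (your preliminary identity $(\Sigma V\Sigma)(1\tens a)(\Sigma V\Sigma)^*=\sigma\Delta(a)$ is the middle step in disguise). One small remark: the commutation $[W_{23},(\Sigma V\Sigma)_{13}]=0$ is not needed here --- the two conjugations are simply performed sequentially, from the inside out --- so that part of your argument is superfluous (in the paper it is only used earlier, to verify that $Z$ is a corepresentation).
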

\begin{proof}
This follows from \eqref{eq:implementation-eq} via the following computation
\begin{align*}
\delta(a) =1\tens a &\Longleftrightarrow \Sigma V^* \Sigma W^*(1\tens a)W\Sigma V\Sigma=1\tens a \\
&  \Longleftrightarrow  \Delta(a) =\Sigma V(a\tens 1)V^*\Sigma \\
&\Longleftrightarrow \Delta(a)=\sigma \Delta(a)  \qedhere
\end{align*}
\end{proof}

By  \cite[Lemma 6.3]{Wang:lacunary}, there exists an $h$-preserving conditional expectation from $C(\GG)$ onto $C_z(\GG)$ exactly when $\GG$ is of Kac type. 
In order to show that this conditional expectation is a contraction for seminorms of the form $L_\ell$, we will need a quantum analogue of the formula \eqref{eq:cond-exp-integral},  which is obtained in the following proposition.

\begin{prop}\label{prop:cond-ext-prop}
The map $E\colon \BB(L^2(\GG)) \to \BB(L^2(\GG))$ given by  $E(a):=(h\tens \id)\delta(a)$ defines a normal conditional expectation onto the fixed point algebra $\BB(L^2(\GG))^{\GG}$,  which preserves the vector state $h$ implemented by $\Lambda(1)$. If $\GG$ is of Kac type, $E$ maps $L^\infty(\GG)$ to $L^\infty_z(\GG)$, $C(\GG)$ to $C_z(\GG)$ and $\Pol(\GG)$ to $\Pol_z(\GG)$.
\end{prop}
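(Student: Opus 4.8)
The plan is to treat the three assertions separately, establishing the conditional-expectation and $h$-invariance statements in full generality and invoking the Kac hypothesis only at the very end.

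First I would check that $E$ is a normal, unital, completely positive map: $\delta$ is a normal unital $*$-homomorphism, hence completely positive, and the slice $(h\tens\id)$ is normal and completely positive, so the composite inherits these properties. To see that $E$ lands in the fixed point algebra, I would apply $(h\tens\id\tens\id)$ to the coaction identity $(\id\tens\delta)\delta(a)=(\Delta\tens\id)\delta(a)$; on the right-hand side the bi-invariance \eqref{eq:bi-inv} collapses the first two legs, giving $1\tens E(a)$, while on the left-hand side the same slice produces $\delta(E(a))$, so that $\delta(E(a))=1\tens E(a)$, i.e. $E(a)\in\BB(L^2(\GG))^\GG$. Conversely, for $T\in\BB(L^2(\GG))^\GG$ one has $E(T)=(h\tens\id)(1\tens T)=T$, so $E$ restricts to the identity on the fixed point algebra and $E^2=E$. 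Since $\delta$ is a normal $*$-homomorphism, its fixed point set is a von Neumann subalgebra, so $E$ is a normal unital completely positive idempotent onto a von Neumann subalgebra and therefore a conditional expectation by Tomiyama's theorem.

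Next I would establish $h$-invariance, the crux being that $\Omega:=\Lambda(1)$ is fixed by $Z$. From the defining formulas one reads off directly that $V$, $\Sigma$ and $W^*$ each fix $\Omega\tens\Omega$ (for instance $V(\Omega\tens\Omega)=\Lambda\tens\Lambda(\Delta(1)(1\tens 1))=\Omega\tens\Omega$, and $W^*(\Omega\tens\Omega)=\Omega\tens\Omega$, whence also $W(\Omega\tens\Omega)=\Omega\tens\Omega$), so $Z(\Omega\tens\Omega)=\Omega\tens\Omega$. As $h$ is the vector state of $\Omega$, this gives $h(E(a))=(h\tens h)\delta(a)=\langle\Omega\tens\Omega,\,Z^*(1\tens a)Z(\Omega\tens\Omega)\rangle=\langle\Omega\tens\Omega,(1\tens a)(\Omega\tens\Omega)\rangle=h(a)$.

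Finally, for the Kac case I would reduce everything to the polynomial algebra. The plan is to show that the conjugation coaction restricts to the algebraic adjoint coaction on $\Pol(\GG)$, namely $\delta(a)=\sum S(a_{(1)})a_{(3)}\tens a_{(2)}$, and in particular maps $\Pol(\GG)$ into $\Pol(\GG)\odot\Pol(\GG)$; this I would verify by computing the action of $Z=W\Sigma V\Sigma$ on $\Lambda(\Pol(\GG))\odot\Lambda(\Pol(\GG))$ through the explicit formulas for $W,V$ and their adjoints (which preserve this dense subspace) and simplifying with the antipode axioms. Granting this, $E(\Pol(\GG))=(h\tens\id)\delta(\Pol(\GG))\subseteq\Pol(\GG)$, and combining with the fixed point property of the first step, Lemma \ref{lem:fixed-pt-lem}, and the identity $\Pol_z(\GG)=\Pol(\GG)\cap\BB(L^2(\GG))^\GG$ yields $E(\Pol(\GG))\subseteq\Pol_z(\GG)$. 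The statements for $C(\GG)$ and $L^\infty(\GG)$ then follow by continuity: since $E$ is norm-contractive, norm-density of $\Pol(\GG)$ in $C(\GG)$ and norm-closedness of $C_z(\GG)$ give $E(C(\GG))\subseteq C_z(\GG)$, and since $E$ is normal, $\sigma$-weak density of $\Pol(\GG)$ in $L^\infty(\GG)$ together with $\sigma$-weak closedness of $L^\infty_z(\GG)$ give $E(L^\infty(\GG))\subseteq L^\infty_z(\GG)$.

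The main obstacle I anticipate is precisely the claim that $\delta$ restricts to a map $\Pol(\GG)\to\Pol(\GG)\odot\Pol(\GG)$, equivalently that $E$ does not leak $L^\infty(\GG)$ out into the generally larger algebra $\BB(L^2(\GG))^\GG$. This is where the Kac hypothesis is indispensable: the Haar state is tracial and the antipode is a bounded $*$-antiautomorphism with $S^2=\id$, which is what permits the Hopf-algebraic simplification of the $Z$-conjugation to close up inside $\Pol(\GG)$. The characterisation in \cite[Lemma 6.3]{Wang:lacunary} of Kac type as exactly the regime admitting an $h$-preserving conditional expectation onto the central part confirms that no such reduction is available in general, so I would regard the explicit verification of the adjoint-coaction formula as the technical heart of the argument.
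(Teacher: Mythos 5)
Your treatment of the first two assertions is correct and follows the paper's own route: the fixed-point property comes from slicing the coaction identity $(\id\tens\delta)\delta=(\Delta\tens\id)\delta$ with $h$ and invoking bi-invariance, and $h$-preservation comes from the fact that $W$ and $V$ fix $\Lambda(1)\tens\Lambda(1)$. (The paper justifies the slice-map interchanges by an explicit ultraweak approximation argument; your appeal to normality is the same argument in compressed form.)

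The Kac-type part, however, contains a genuine gap, located exactly at the step you yourself single out as the technical heart. The claim that the conjugation coaction restricts on $\Pol(\GG)$ to the Hopf-algebraic adjoint coaction, $\delta(a)=(\lambda\tens\lambda)\bigl(\sum S(a_{(1)})a_{(3)})\tens a_{(2)}\bigr)$, and in particular that $\delta(\Pol(\GG))\subseteq\Pol(\GG)\odot\Pol(\GG)$, is false for genuinely quantum Kac algebras; it already fails for $O_n^+$, so your argument breaks down for $O_2^+$, one of the paper's target examples. There are two independent obstructions. First, $\delta$ is a $*$-homomorphism and $\lambda\tens\lambda$ is injective on $\Pol(\GG)\odot\Pol(\GG)$, so if $\delta$ had this form, the map $\rho(a)=\sum S(a_{(1)})a_{(3)}\tens a_{(2)}$ would have to be multiplicative; but $\rho(ab)=\sum S(b_{(1)})S(a_{(1)})a_{(3)}b_{(3)}\tens a_{(2)}b_{(2)}$ whereas $\rho(a)\rho(b)=\sum S(a_{(1)})a_{(3)}S(b_{(1)})b_{(3)}\tens a_{(2)}b_{(2)}$, and these coincide only under commutativity or cocommutativity assumptions --- which are precisely your two guiding examples, and why they mislead. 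Second, and decisively: every operator in $(\lambda\tens\lambda)(\Pol(\GG)\odot\Pol(\GG))$, indeed every element of $\BB(L^2(\GG))\,\bar{\tens}\,L^\infty(\GG)$, commutes with $1\tens JbJ$ for all $b\in\Pol(\GG)$, where $J\Lambda(x)=\Lambda(x^*)$ is the modular conjugation. Carrying out the Sweedler computation of $Z^*(1\tens a)Z$ in the Kac case (which is indeed possible, as $S^{-1}=S$) yields $\delta(a)(\Lambda(x)\tens\Lambda(y))=\sum\Lambda\bigl(S(y_{(2)})S(a_{(3)})a_{(1)}y_{(3)}x\bigr)\tens\Lambda(a_{(2)}y_{(1)})$: the Sweedler legs of $y$ do not decouple, and a Weingarten computation in $O_n^+$ (take $a=u_{ij}$ with $i\neq j$ and $b$ a generator) shows that $\delta(a)$ and $1\tens JbJ$ genuinely fail to commute. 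So the Kac hypothesis does not make the conjugation by $Z$ ``close up'' inside $\Pol(\GG)$; the coaction $\delta$ does not even map $L^\infty(\GG)$ into $L^\infty(\GG)\,\bar{\tens}\,L^\infty(\GG)$.

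What is true --- and what the paper proves --- is that the $h$-\emph{average} closes up. One first establishes the formula $E(a)=\tilde{\Delta}^*\circ\Delta^{\op}(a)\circ\tilde{\Delta}$, valid for any compact quantum group, and then uses the Kac assumption exactly where your proposal does not: traciality of $h$ gives $JL^\infty(\GG)J=L^\infty(\GG)'$, a direct computation with this formula shows that $E(a)$ commutes with $JbJ$ for every $b\in\Pol(\GG)$, and the bicommutant theorem places $E(a)$ in $L^\infty(\GG)$; Lemma \ref{lem:fixed-pt-lem} then places it in $L^\infty_z(\GG)$, and the orthogonality relations give the explicit formula $E(u^\alpha_{ij})=\delta_{ij}d_\alpha^{-1}\chi(u^\alpha)$, whence $E(\Pol(\GG))\subseteq\Pol_z(\GG)$ and, by norm-continuity and normality, the statements for $C(\GG)$ and $L^\infty(\GG)$. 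The cancellation your approach needs is produced by the Haar averaging (the compression $\tilde{\Delta}^*(\cdot)\tilde{\Delta}$), not by the coaction itself; any repair of your argument must work with the sliced map $E$ from the start rather than with $\delta$.
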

We remark that the Haar state $h$ on $C(\GG)$ is exactly the vector state implemented by $\Lambda(1)$, i.e.~$h(a)=\inn{\Lambda(1), a\Lambda(1)}$,  and it is therefore consistent to also use the symbol $h$ for its extension to $\BB(L^2(\GG))$, as done in Proposition \ref{prop:cond-ext-prop}.

\begin{proof}
The map $E$ is unital and completely positive (henceforth abbreviated ucp) by construction.  Moreover,  $T\mapsto 1\tens T$ is normal as a $*$-isomorphism of $\BB(L^2(\GG))$ onto the von Neumann algebra  $1\tens \BB(L^2(\GG))$ \cite[Section III, Corollary 2.2.12]{Blackadar:OperatorAlgebras}. From this it follows that $\delta$ is normal and since also 
$h \tens \id$ 
is normal, we conclude that $E$ is normal. \\

For $a\in \BB(L^2(\GG))^{\GG} $, we have 
$
E(a)=(h\tens \id)\delta(a)=(h\tens \id )(1\tens a)=a.
$
Conversely, for a given $a\in \BB(L^2(\GG))$, to see that $E(a)\in \BB(L^2(\GG))^{\GG}$ we  pick a net   $M_i=\sum_{k=0}^{n_i} S_i^k\tens T_i^k  \in  L^\infty(\GG))\odot \BB(L^2(\GG))$ converging ultraweakly to $\delta(a)$. 
Since all maps involved are normal (i.e.~ultraweakly continuous) we may now proceed with an approximation argument as follows:
\begin{align*}
\delta(E(a))&=\delta\big((h\tens \id)\delta(a)\big)\\
&=\delta\big((h\tens \id)\lim_iM_i\big)\\
&=\lim_i \delta((h\tens \id)M_i)\\
&=\lim_i \sum_{k=1}^{n_i} h(S_i^k)\delta(T_i^k) \\
&=\lim_i \sum_{k=1}^{n_i} (h\tens \id \tens \id)\big((\id\tens \delta)(S_i^k\tens T_i^k)\big)\\
&=(h\tens \id \tens \id)(\id\tens \delta)(\delta(a))\\
& =(h\tens \id \tens \id)(\Delta\tens \id)(\delta(a))\\
&=((h\tens \id)\Delta \tens \id)(\delta(a))\\
& =(h(-)1\tens \id)(\delta(a))\\
&=\lim_{i} \big(h(-)1\tens \id\big)\Big(\sum_{k=1}^{n_i} S_i^k\tens T_i^k\Big)\\
&= \lim_{i} \sum_{k=1}^{n_i}h (S_i^k)1\tens T_i^k\\
&=1\tens (h\tens \id)\delta(a)\\
&=1\tens E(a).
\end{align*}
 That is, $E$ is a conditional expectation onto the fixed point algebra. Using that $V$ and $W$ fix $\Lambda(1)\tens \Lambda(1)$, we obtain
\begin{align*}
h(E(a))&=h((h\tens \id) \delta(a))\\
&= (h\tens h)\delta(a)\\
&=\inn{ \Lambda(1)\tens \Lambda(1) , \delta(a) (\Lambda(1)\tens \Lambda(1)) }\\
&=\inn{ \Lambda(1)\tens \Lambda(1),  \Sigma V^*\Sigma W^*(1\tens a) W \Sigma V\Sigma   (\Lambda(1)\tens \Lambda(1)) }\\
&=\inn{\Lambda(1)\tens \Lambda(1), (1\tens a)(\Lambda(1)\tens \Lambda(1))}\\
&=h(a).
\end{align*}
It only remains to be shown that $E$ preserves the algebras associated with $\GG$ in the Kac type situation. Arguing as in  \cite[proof of Theorem 1.4]{Lemeux:Haagerup}, we first note that  for $x\in C(\GG)$ we have
\[
\|\Delta(x)\|_2^2:= (h\tens h)(\Delta(x)\Delta(x)^*)=(h \tens h)(\Delta(xx^*))=h(xx^*)=\|x \|_2^2,
\]
and $\Delta$ therefore extends isometrically to $\tilde{\Delta}\colon L^2(\GG) \to L^2(\GG) \hat{\tens}L^2(\GG)$. For $a\in C(\GG)$, we now claim that
\begin{align}\label{eq:cond-exp-formula}
E(a)=\tilde{\Delta}^* \circ \Delta^{\op}(a) \circ\tilde{\Delta},
\end{align}
where $\Delta^{\op}(a):=\sigma(\Delta(a))$. To see this, we  fix $x,y\in \Pol(\GG)$ and compute

\begin{align*}
	\inn{ \Lambda(x),  E(a) \Lambda(y)}&=\inn{ \Lambda(x), (h\tens 1)\delta(a) \Lambda(y)}\\
	&=\inn{\Lambda(1)\tens \Lambda(x), \delta(a) (\Lambda(1)\tens \Lambda(y))}\\
	&=\inn{\Lambda(1)\tens \Lambda(x), \Sigma V^*\Sigma W^*(1\tens a) W\Sigma V\Sigma (\Lambda(1)\tens \Lambda(y))}\\
	&= \inn{V(\Lambda(x)\tens \Lambda(1)), \Sigma\Delta(a) \Sigma V(\Lambda(y)\tens \Lambda (1))}\\
	&=\inn{\Lambda\tens \Lambda(\Delta(x)), \Sigma \Delta(a) \Sigma (\Lambda\tens \Lambda(\Delta(y)))}\\
	&= \inn{\tilde{\Delta}(\Lambda(x)), \Delta^{\op}(a) \circ \tilde{\Delta}(\Lambda(y))}\\
	&=\inn{\Lambda(x), \tilde{\Delta}^*\circ  \Delta^{\op}(a)\circ  \tilde{\Delta} (\Lambda(y))}.
\end{align*}
Using the formula \eqref{eq:cond-exp-formula}, we will now show that when $\GG$ is of Kac type and $a\in \Pol(\GG)$ then $E(a)\in L^\infty(\GG)$. From this it follows that $E$ preserves $L^\infty(\GG)$, since $E$ is  normal and $\Pol(\GG)$ is ultraweakly dense in $L^\infty(\GG)$, and hence that $E$ maps $L^\infty(\GG)$ to $L^\infty_z(\GG)$ by Lemma \ref{lem:fixed-pt-lem}.
Since $h$ is a trace, the map $\Lambda(x)\to \Lambda(x^*)$ extends to an anti-unitary $J$ on $L^2(\GG)$ with the property that $JL^\infty(\GG)J=L^\infty(\GG)'$. By von Neumann's bicommutant theorem and  density of $\Pol(\GG)$ in $L^\infty(\GG)$, it therefore suffices to show that $E(a)$ commutes with $JbJ$ for all $b\in \Pol(\GG)$. To see this, fix again $x,y\in \Pol(\GG)$ and note that $JbJ\Lambda(x)=\Lambda(xb^*)$. Using \eqref{eq:cond-exp-formula}, we now compute as follows:
\begin{align*}
\inn{E(a)JbJ \Lambda(x), \Lambda(y)}&= \inn{\tilde{\Delta}^* \circ \Delta^{\op}(a) \circ\tilde{\Delta} (\Lambda(xb^*)), \Lambda(y)}\\
&=\inn{\Delta^{\op}(a) (\Lambda\tens \Lambda)\Delta(xb^*), (\Lambda\tens \Lambda)\Delta(y)}\\
&=\inn{\Lambda \tens \Lambda(a_{(2)}x_{(1)}b_{(1)}^* \tens a_{(1)}x_{(2)}b_{(2)}^*), \Lambda\tens \Lambda(y_{(1)}\tens y_{(2)})}\\
&=\inn{Jb_{(1)}J\Lambda(a_{(2)}x_{(1)}), \Lambda(y_{(1)})}\cdot \inn{Jb_{(2)}J\Lambda(a_{(1)}x_{(2)}), \Lambda(y_{(2)})}\\
&=\inn{\Lambda(a_{(2)}x_{(1)}), Jb_{(1)}^*J \Lambda(y_{(1)})}\cdot \inn{\Lambda(a_{(1)}x_{(2)}), Jb_{(2)}^*J\Lambda(y_{(2)})}\\
&=\inn{\Lambda(a_{(2)}x_{(1)}),  \Lambda(y_{(1)}b_{(1)})}\cdot \inn{\Lambda(a_{(1)}x_{(2)}), \Lambda(y_{(2)}b_{(2)})}\\
&=\inn{\Lambda\tens \Lambda   (a_{(2)}x_{(1)}\tens a_{(1)}x_{(2)}) ,  \Lambda\tens \Lambda (y_{(1)}b_{(1)}\tens y_{(2)}b_{(2)})}\\
&=\inn{\Delta^{\op}(a) \Lambda\tens \Lambda(\Delta(x)), \tilde{\Delta}\Lambda(yb)}\\
&= \inn{\tilde{\Delta}^* \circ \Delta^{\op}(a) \circ\tilde{\Delta}(\Lambda(x)), Jb^*J\Lambda(y)}\\
&=\inn{JbJ E(a) \Lambda(x), \Lambda(y)}.
\end{align*}
We have thus shown that $E$ maps $L^\infty(\GG)$ to itself, and by Lemma \ref{lem:fixed-pt-lem} it therefore takes values in $L^\infty_z(\GG)$. The restriction of $E$ is thus the unique $h$-preserving conditional expectation from $L^\infty(\GG)$ onto $L^\infty_z(\GG)$. By \cite[Theorem 3.7]{Crann:CaracterDensity} we have $L^\infty_z(\GG)=\Pol_z(\GG)''$ and $E$ is therefore identical to the conditional expectation considered in the proof of \cite[Lemma 6.3]{Wang:lacunary}. In particular, we  have the formula 
\begin{align}\label{eq:E-on-matrix-units}
E(u_{ij}^\alpha)=\delta_{ij} d_\alpha^{-1}\chi(u^\alpha)
\end{align}
 derived in the proof of   \cite[Lemma 6.3]{Wang:lacunary}, and hence that $E$ maps $\Pol(\GG)$ to $\Pol_z(\GG)$ and thus, by continuity, $C(\GG)$ to $C_z(\GG)$. For the convenience of the reader, we now derive the formula \eqref{eq:E-on-matrix-units} directly. To this end, first note that since $\Lambda(1)$ is separating for $L^\infty(\GG)$ it suffices to show that 
\[
\inn{E(u_{ij}^\alpha)\Lambda(1), \Lambda(u_{pq}^\beta)}= \inn{\delta_{ij} d_\alpha^{-1}\chi(u^\alpha)\Lambda(1), \Lambda(u_{pq}^\beta)},
\]
for all $\beta\in \Irred(\GG)$ and $p,q\in \{1,\dots, d_\beta\}$. Note also that since $\GG$ is assumed Kac, the orthogonality relations  \cite[Proposition 5.3.8 (iii)]{Timmermann-book} simplify to the expression
\[
h((u_{ij}^\alpha)^*
u_{pq}^\beta )=\delta_{\alpha,\beta}\delta_{ip}\delta_{jq} d_\alpha^{-1}.
\]
The claimed identity now follows from this and \eqref{eq:cond-exp-formula} via a direct computation:
\begin{align*}
\inn{E(u_{ij}^\alpha)\Lambda(1), \Lambda(u_{pq}^\beta)} &= \Big\langle{\Delta^{\op}(u_{ij}^\alpha)(\Lambda(1)\tens \Lambda(1)), \sum_{l=1}^{d_\beta}\Lambda(u_{pl}^\beta)\tens \Lambda(u_{lq}^\beta) }\Big\rangle\\
&= \Big\langle{ \sum_{k=1}^{d_\alpha}\Lambda(u_{kj}^\alpha)\tens \Lambda(u_{ik}^\alpha) , \sum_{l=1}^{d_\beta}\Lambda(u_{pl}^\beta)\tens \Lambda(u_{lq}^\beta) }\Big\rangle\\
&=\delta_{\alpha,\beta}\delta_{i,j}\delta_{p,q} d_\alpha^{-2}\\
&= \inn{\delta_{ij} d_\alpha^{-1}\chi(u^\alpha)\Lambda(1), \Lambda(u_{pq}^\beta)}. \qedhere
\end{align*}

\end{proof}

\subsection{The conditional expectation is a slip-norm contraction }

Our next aim is to show that the conditional expectation introduced in the previous section is a contraction for the seminorm $L_\ell$ associated with a proper length function $\ell$. Let therefore $\GG$ be a compact quantum group of Kac type, and fix  a proper length function $\ell\colon \Irred(\GG)\to [0,\infty)$. 
On $L^2(\GG) \hat{\tens}L^2(\GG)$ we  consider the densely defined symmetric operator
\begin{align*}
\id \tens \C D_\ell\colon \Lambda(\Pol(\GG)) \odot \Lambda(\Pol (\GG)) &\longrightarrow  L^2(\GG) \hat{\tens}L^2(\GG)
\end{align*}
defined by $\xi\tens \eta \longmapsto \xi \tens \C D_\ell (\eta)$.

\begin{lemma}\label{lem:commutation-lem}
The operator $\id \tens \mathcal{D}_\ell$ commutes with $W,W^*, \Sigma V\Sigma, \Sigma V^*\Sigma, Z$ and $Z^*$, as operators  on the dense subspace $ \Lambda(\Pol(\GG)) \odot \Lambda(\Pol (\GG))\subset L^2(\GG) \hat{\tens}L^2(\GG)$.
\end{lemma}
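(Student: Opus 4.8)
The plan is to verify each commutation relation directly on the basis vectors $\Lambda(u_{pq}^\beta)\tens\Lambda(u_{ij}^\alpha)$ spanning the core $\Lambda(\Pol(\GG))\odot\Lambda(\Pol(\GG))$, resting on a single structural observation: neither $W^*$ nor $\Sigma V\Sigma$ alters the corepresentation type appearing in the \emph{second} tensor leg. Since $\id\tens\mathcal{D}_\ell$ acts on the second leg as multiplication by the scalar $\ell(\alpha)$ precisely when that leg is a matrix coefficient of $u^\alpha$, this preservation of type will force commutation. Conceptually, this is the concrete shadow of the fact that $\mathcal{D}_\ell$ is affiliated to the centre of $\ell^\infty(\hat{\GG})$, combined with the leg memberships $W\in L^\infty(\GG)\bar{\tens}\ell^\infty(\hat{\GG})$ and $\Sigma V\Sigma\in L^\infty(\GG)\bar{\tens}\ell^\infty(\hat{\GG})'$ recorded in \eqref{eq:mult-unit}; but it is cleaner to argue by hand.

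First I would treat $W^*$. Using $\Delta(u_{ij}^\alpha)=\sum_k u_{ik}^\alpha\tens u_{kj}^\alpha$ together with the defining formula for $W^*$ gives
\[
W^*\big(\Lambda(u_{pq}^\beta)\tens\Lambda(u_{ij}^\alpha)\big)=\sum_{k}\Lambda(u_{ik}^\alpha u_{pq}^\beta)\tens\Lambda(u_{kj}^\alpha),
\]
in which every second leg $\Lambda(u_{kj}^\alpha)$ is a matrix coefficient of the \emph{same} $u^\alpha$. Applying $\id\tens\mathcal{D}_\ell$ therefore scales the whole sum by $\ell(\alpha)$, which is exactly $W^*(\id\tens\mathcal{D}_\ell)$ applied to the input vector, since the input second leg $\Lambda(u_{ij}^\alpha)$ is also scaled by $\ell(\alpha)$. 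Hence $\id\tens\mathcal{D}_\ell$ commutes with $W^*$ on the core. The identical bookkeeping, now feeding $V(\Lambda(x)\tens\Lambda(y))=\Lambda\tens\Lambda(\Delta(x)(1\tens y))$ through the flip, yields
\[
\Sigma V\Sigma\big(\Lambda(u_{pq}^\beta)\tens\Lambda(u_{ij}^\alpha)\big)=\sum_k\Lambda(u_{kj}^\alpha u_{pq}^\beta)\tens\Lambda(u_{ik}^\alpha),
\]
whose second legs $\Lambda(u_{ik}^\alpha)$ are again coefficients of $u^\alpha$, giving commutation with $\Sigma V\Sigma$ by the same reasoning.

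The remaining operators follow formally. Recall that $W^*$ and $V$ map $\Pol(\GG)\odot\Pol(\GG)$ bijectively onto itself, hence so do $W$, $\Sigma V\Sigma$ and $\Sigma V^*\Sigma$, and $D:=\id\tens\mathcal{D}_\ell$ clearly preserves the core. From $DW^*=W^*D$ on the core one deduces $DW=WD$ there: for $\xi$ in the core put $\eta=W\xi$, so that $\xi=W^*\eta$ and $WD\xi=WDW^*\eta=WW^*D\eta=D\eta=DW\xi$; the same manoeuvre converts the $\Sigma V\Sigma$ identity into $D(\Sigma V^*\Sigma)=(\Sigma V^*\Sigma)D$. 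Finally, $Z=W\Sigma V\Sigma$ and $Z^*=\Sigma V^*\Sigma\,W^*$ commute with $D$ as compositions of operators that individually commute with $D$ and preserve the core.

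I do not expect a genuine obstacle here. The only point demanding care is that $\mathcal{D}_\ell$ is unbounded, but because the assertion is confined to the algebraic core $\Lambda(\Pol(\GG))\odot\Lambda(\Pol(\GG))$ — on which the fundamental unitaries and their adjoints restrict to bijections and on which $\mathcal{D}_\ell$ acts diagonally — all the manipulations above stay within the core, so no domain or closure subtleties intervene.
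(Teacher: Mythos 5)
Your proof is correct and follows essentially the same route as the paper: both verify commutation with $W^*$ and $\Sigma V\Sigma$ by a direct matrix-coefficient computation (the key point being that these operators preserve the corepresentation label in the second tensor leg, so $\id\tens\mathcal{D}_\ell$ acts by the same scalar before and after), and then deduce the remaining operators formally. The only cosmetic difference is that the paper passes from $W^*$ to $W$ (and from $\Sigma V\Sigma$ to $\Sigma V^*\Sigma$) using the symmetry of $\id\tens\mathcal{D}_\ell$, whereas you use bijectivity of the fundamental unitaries on the core; both hinge on the same fact that $W^*$ and $V$ map $\Pol(\GG)\odot\Pol(\GG)$ onto itself.
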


\begin{proof}
By the  remarks  in Section \ref{sec:corep}, all  maps involved preserve  the dense subspace $ \Lambda(\Pol(\GG)) \odot \Lambda(\Pol (\GG))$ and the claimed commutation relations  are therefore well-defined. 
Computing with matrix coefficients we obtain
\begin{align*}
(\id \tens  \C D_\ell) W^* (\Lambda (u_{ij}^\alpha) \tens \Lambda(u_{pq}^{\beta}))&=(\id \tens  \C D_\ell) (\Lambda \tens \Lambda)(\Delta(u_{pq}^\beta)(u_{ij}^\alpha \tens 1))\\
&=(\id \tens  \C D_\ell) \Big(\sum_{k=1}^{d_{\beta}} \Lambda(u_{pk}^\beta u_{ij}^{\alpha})\tens \Lambda(u_{kq}^\beta)\Big)\\
&= \ell({\beta})\cdot \sum_{k=1}^{d_{\beta}}\Lambda(u_{pk}^\beta u_{ij}^{\alpha})\tens \Lambda(u_{kq}^\beta)\\
&= W^* (\id \tens  \C D_\ell)  \Lambda((u_{ij}^\alpha) \tens \Lambda(u_{pq}^{\beta})).
\end{align*}
From this it follows that $\id \tens  \mathcal{D}_\ell$ commutes with $W^*$. Using that $\id \tens  \C D_\ell$ is symmetric,  we obtain that $\id \tens  \C D_\ell$ also commutes with $W$ as operators on $ \Lambda(\Pol(\GG)) \odot \Lambda(\Pol (\GG))$. 
A computation similar to the one above, shows that $\Sigma V\Sigma $ commutes with $\id \tens  \mathcal{D}_\ell$ and hence the same holds true for its adjoint by the symmetry of $\id \tens  \mathcal{D}_\ell$. Since $Z:= W\Sigma V\Sigma$ it now follows that $\id \tens  \mathcal{D}_\ell$ also commutes with $Z$ and $Z^*$.
\end{proof}




\begin{prop}\label{prop:cond-exp-contraction}
It holds  that $L_{\ell}(E(a))\leq L_{\ell}(a)$ for all $a\in \Pol(\GG)$.
\end{prop}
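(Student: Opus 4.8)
The plan is to establish the operator identity $[D_\ell, E(a)] = E([D_\ell,a])$ and then invoke the contractivity of the unital completely positive map $E$. Since $E$ maps $\Pol(\GG)$ into $\Pol_z(\GG)\subseteq \Pol(\GG)$ by Proposition \ref{prop:cond-ext-prop}, the element $E(a)$ lies in $\Pol(\GG)$, so $[D_\ell, E(a)]$ is a bounded operator and $L_\ell(E(a))$ is well-defined. Once the displayed identity is in hand, we immediately obtain
\[
L_\ell(E(a)) = \|[D_\ell, E(a)]\| = \|E([D_\ell,a])\| \leq \|[D_\ell,a]\| = L_\ell(a),
\]
the inequality being nothing but the norm-contractivity of the ucp map $E$ on $\BB(L^2(\GG))$.

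To prepare for the identity, I would first record that the bounded operator $[D_\ell, a] = \partial_{D_\ell}(a)$ satisfies $[\id\tens D_\ell, \delta(a)] = \delta([D_\ell,a])$ on the core $\Lambda(\Pol(\GG))\odot \Lambda(\Pol(\GG))$. Indeed, writing $\delta(a) = Z^*(1\tens a)Z$ and using Lemma \ref{lem:commutation-lem} to commute $\id\tens \mathcal{D}_\ell$ past $Z$ and $Z^*$, one is left with $Z^*[\id\tens D_\ell, 1\tens a]Z = Z^*(1\tens [D_\ell,a])Z = \delta([D_\ell,a])$, since $\id\tens D_\ell$ acts only on the second leg and thus commutes there with $1 \tens a$ up to the commutator $1 \tens [D_\ell,a]$.

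The heart of the argument is then a matrix-coefficient computation transporting the commutator through the slice map $E = (h\tens \id)\delta$. Recalling that $h$ is the vector state implemented by $\Lambda(1)$, for $\xi,\eta\in \Lambda(\Pol(\GG))$ I would compute $\langle \xi, [D_\ell, E(a)]\eta\rangle$ by moving $D_\ell$ onto $\xi$ and $\eta$ via self-adjointness, rewriting $\Lambda(1)\tens D_\ell\xi = (\id\tens D_\ell)(\Lambda(1)\tens\xi)$, and then using self-adjointness of $\id\tens D_\ell$ together with the fact (noted in Section \ref{sec:corep}) that $\delta(a)$ preserves the core. This yields $\langle \xi, [D_\ell, E(a)]\eta\rangle = \langle \Lambda(1)\tens\xi, [\id\tens D_\ell, \delta(a)](\Lambda(1)\tens\eta)\rangle$, whereupon the identity from the previous step turns the right-hand side into $\langle \xi, (h\tens\id)\delta([D_\ell,a])\eta\rangle = \langle \xi, E([D_\ell,a])\eta\rangle$. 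Since $\Lambda(\Pol(\GG))$ is dense and both operators are bounded, this establishes $[D_\ell, E(a)] = E([D_\ell,a])$ on all of $L^2(\GG)$.

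The main obstacle I anticipate is purely a matter of domain bookkeeping: I must ensure that every step takes place on vectors lying in the cores of $D_\ell$ and of $\id\tens D_\ell$, so that the self-adjointness manipulations and the commutation relations of Lemma \ref{lem:commutation-lem} are legitimately applied, and that the slice $(h\tens\id)$ of the bounded operator $[\id\tens D_\ell,\delta(a)] = \delta([D_\ell,a])$ is unambiguous. No growth or finiteness input from the length function is needed here beyond what guarantees $[D_\ell,a]$ is bounded; the inequality is a soft consequence of contractivity of $E$ once the commutation with the derivation $\partial_{D_\ell}$ has been verified.
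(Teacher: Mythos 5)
Your proposal is correct and follows essentially the same route as the paper's proof: both establish the intertwining identity $\partial_\ell(E(a))=E(\partial_\ell(a))$ via a matrix-coefficient computation that uses $E=(h\tens\id)\delta$ with $h$ the vector state at $\Lambda(1)$, the formula $\delta(a)=Z^*(1\tens a)Z$, and Lemma \ref{lem:commutation-lem} to commute $\id\tens\mathcal{D}_\ell$ past $Z$ and $Z^*$, before concluding by norm-contractivity of the ucp map $E$. The only cosmetic difference is that you isolate the identity $[\id\tens D_\ell,\delta(a)]=\delta([D_\ell,a])$ as a separate step, whereas the paper carries out the whole computation in one chain of inner-product equalities.
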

\begin{proof}
For $a,x,y\in \Pol(\GG)$, we  compute as follows, invoking Lemma \ref{lem:commutation-lem}  in the fourth step:
\begin{align*}
	&\inn{\Lambda(x), \partial_\ell(E(a))\Lambda(y)}=\\
	& = \inn{\Lambda(x), \mathcal{D}_\ell ((h\tens \id  ) \delta(a))\Lambda(y) )}  -  \inn{ \Lambda(x), ((h\tens \id) \delta(a)) \mathcal{D}_\ell \Lambda(y) )}\\
	&=\inn{\Lambda(1) \tens  \mathcal{D_\ell}\Lambda(x) , \delta(a) (\Lambda(1)\tens \Lambda(y))  } - \inn{ \Lambda(1)\tens \Lambda(x) , \delta(a) (\Lambda(1)\tens \mathcal{D}_\ell \Lambda(y))  }\\
	&=\inn{\Lambda(1) \tens  \mathcal{D_\ell}\Lambda(x) , Z^*(1\tens a)Z (\Lambda(1)\tens \Lambda(y))  } - \inn{ \Lambda(1)\tens \Lambda(x) , Z^*(1\tens a)Z (\Lambda(1)\tens \mathcal{D}_\ell \Lambda(y)) }\\
	&=\inn{\Lambda(1) \tens  \Lambda(x) , Z^*(1 \tens  \mathcal{D_\ell}a)Z (\Lambda(1)\tens \Lambda(y)) } - \inn{ \Lambda(1)\tens \Lambda(x) , Z^*(1\tens a\mathcal{D}_\ell)Z (\Lambda(1)\tens  \Lambda(y))  }\\
	&=\inn{ \Lambda(1)\tens \Lambda(x)  , Z^* (1\tens \partial_\ell(a)Z (\Lambda(1)\tens \Lambda(y))}\\
	&=\inn{ \Lambda(x) , (h\tens \id)\delta(\partial_\ell(a)) \Lambda(y) }\\
	&=\inn{  \Lambda(x) , E(\partial_\ell(a))\Lambda(y)}.
\end{align*}
We thus obtain that $\partial_\ell(E(a))=E(\partial_\ell(a))$ and since $E$ is a norm contraction the desired conclusion follows:
\[
L_\ell(E(a))=\|\partial_\ell(E(a))\|=\|E(\partial_\ell(a))\|\leq \|\partial_\ell(a)\|=L_\ell(a). \qedhere
\]
\end{proof}
The last bit of information about $L_\ell$ needed for the proof of Theorem \ref{introthm:lifting}, is that it satisfies Li's left-invariance condition \cite{Li:ECQ}; i.e.~that the following result holds true.

  \begin{lemma}\label{lem:left-inv}
 The seminorm $L_\ell$ satisfies  $L_\ell\big((\varphi\tens \id)\Delta(a)\big)\leq \|\varphi\|\cdot L_\ell(a)$ for all $a\in \Pol(\GG)$ and $\varphi\in \BB(L^2(\GG))^*$.
 \end{lemma}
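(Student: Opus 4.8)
The plan is to show that the derivation $\partial_\ell$ intertwines with the slice map $(\varphi\tens\id)\Delta$, in the precise sense that $\partial_\ell\big((\varphi\tens\id)\Delta(a)\big)=(\varphi\tens\id)\big((\id\tens\partial_\ell)\Delta(a)\big)$, and then to identify $(\id\tens\partial_\ell)\Delta(a)$ with $W^*(1\tens\partial_\ell(a))W$ by means of the commutation relations in Lemma \ref{lem:commutation-lem}. First I would observe that for $a\in\Pol(\GG)$ the element $b:=(\varphi\tens\id)\Delta(a)$ again lies in $\Pol(\GG)$: writing $\Delta(a)=\sum_j a_{(1)}^j\tens a_{(2)}^j$ in Sweedler notation, we have $b=\sum_j\varphi(a_{(1)}^j)\,a_{(2)}^j$, a finite linear combination of matrix coefficients. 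In particular $\partial_\ell(b)$ is bounded, and by linearity of $\partial_\ell$ one gets $\partial_\ell(b)=\sum_j\varphi(a_{(1)}^j)\,\partial_\ell(a_{(2)}^j)=(\varphi\tens\id)\big(\sum_j a_{(1)}^j\tens\partial_\ell(a_{(2)}^j)\big)$, where the last tensor is precisely $(\id\tens\partial_\ell)\Delta(a)$.

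The crucial step is the identity $(\id\tens\partial_\ell)\Delta(a)=W^*(1\tens\partial_\ell(a))W$. I would prove it by working on the dense core $\Lambda(\Pol(\GG))\odot\Lambda(\Pol(\GG))$, where all the operators involved are defined and preserve the domain. There, the left-hand side equals the commutator $[\id\tens\mathcal{D}_\ell,\,\Delta(a)]$, and since $\Delta(a)=W^*(1\tens a)W$ by the implementation formula \eqref{eq:implementation-eq}, Lemma \ref{lem:commutation-lem} allows me to pull $\id\tens\mathcal{D}_\ell$ through $W^*$ and $W$, yielding $W^*[\id\tens\mathcal{D}_\ell,\,1\tens a]W=W^*(1\tens\partial_\ell(a))W$. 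Both sides are bounded operators agreeing on a dense subspace, hence equal. Combining this with the previous paragraph gives $\partial_\ell(b)=(\varphi\tens\id)\big(W^*(1\tens\partial_\ell(a))W\big)$.

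It then remains to take norms. Conjugation by the unitary $W$ is isometric, so $\|W^*(1\tens\partial_\ell(a))W\|=\|\partial_\ell(a)\|=L_\ell(a)$, and since this operator in fact lies in the algebraic tensor product $\Pol(\GG)\odot\BB(L^2(\GG))$ (being equal to the finite sum $\sum_j a_{(1)}^j\tens\partial_\ell(a_{(2)}^j)$), the slice map $(\varphi\tens\id)$ is defined on it with no normality hypothesis on $\varphi$. Invoking the standard fact that the slice map on the minimal tensor product has norm $\|\varphi\|$, I conclude $L_\ell(b)=\|\partial_\ell(b)\|\leq\|\varphi\|\cdot\|\partial_\ell(a)\|=\|\varphi\|\cdot L_\ell(a)$. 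The main obstacle is the bookkeeping around the unbounded operator $\mathcal{D}_\ell$ --- ensuring that the manipulations are legitimate on the common core and that the slice estimate survives for a general, not necessarily normal, functional $\varphi$ --- but both issues dissolve once one notes that the relevant operator is genuinely algebraic in its first leg, so that the whole argument reduces to the commutation relations already recorded in Lemma \ref{lem:commutation-lem}.
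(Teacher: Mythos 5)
Your proof is correct and takes essentially the same route as the paper's: both reduce the claim to the identity $[\id\tens\mathcal{D}_\ell,\Delta(a)]=W^*(1\tens[\mathcal{D}_\ell,a])W$ on the core $\Lambda(\Pol(\GG))\odot\Lambda(\Pol(\GG))$, obtained from the implementation formula $\Delta(a)=W^*(1\tens a)W$ together with Lemma \ref{lem:commutation-lem}, and then slice with $\varphi\tens\id$ and estimate norms. Your additional observations --- that $(\varphi\tens\id)\Delta(a)$ again lies in $\Pol(\GG)$ and that the sliced operator is algebraic in its first leg, so no normality assumption on $\varphi$ is needed --- are details the paper leaves implicit, and they are handled correctly.
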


 \begin{proof}
For $a\in \Pol(\GG)$, we  compute as follows, where all equalities should  be interpreted as holding on the dense subspace $ \Lambda(\Pol(\GG)) \odot \Lambda(\Pol (\GG))\subset L^2(\GG) \hat{\tens}L^2(\GG) $:

\begin{align*}
[\id \tens  \C D_{\ell}, \Delta(a)] &=(\id \tens  \C D_\ell ) \Delta(a)-\Delta(a)(\id \tens  \mathcal{D}_\ell) \\
 &= (\id \tens  \C D_\ell)W^*(1\tens a)W- W^*(1\tens a)W (\id \tens  \C D_\ell)\\
 &=W^*(\id \tens  \C D_\ell)(1\tens a)W- W^*(1\tens a) (\id \tens  \C D_\ell)W \tag{Lemma \ref{lem:commutation-lem}}\\
 &= W^* (1 \tens [\C D_\ell,a])W.
 \end{align*}
 We therefore obtain the following identities, between  operators defined on the dense subspace $\Pol(\GG)\subset L^2(\GG)$:
  \begin{align*}
[\C D_\ell, (\varphi\tens \id)\Delta(a)]&=\varphi(a_{(1)})[\C D_\ell, a_{(2)}] \\
&=(\varphi\tens\id) [\id \tens  \C D_\ell, \Delta(a) ])\\
&= (\varphi\tens\id) (W^* (1 \tens [\C D_\ell,a])W).
 \end{align*}
 The bounded extension of $[\C D_\ell, (\varphi\tens \id)\Delta(a)]$, i.e.~the operator $\partial_\ell( (\varphi\tens \id)\Delta(a))$, therefore agrees with $(\varphi\tens\id) (W^* (1 \tens \partial_\ell(a))W)$, from which the claim now follows via the following estimates:
\begin{align*}
 L_\ell((\varphi\tens \id )\Delta(a)) &=\| \partial_\ell((\varphi\tens \id )\Delta(a))\|\\
 &= \| (\varphi\tens\id) (W^* (1 \tens \partial_\ell(a))W) \|\\
 &\leq  \|\varphi \tens \id\| \cdot \| (W^* (1 \tens \partial_\ell(a))W) \|\\
 &=  \|\varphi \| \cdot\|  \partial_\ell(a) \|\\
 &=\|\varphi\| \cdot L_\ell(a).  \qedhere
\end{align*}

 \end{proof}

\section{Central approximations of the counit}
In this section, we show how the counit on a compact coamenable quantum group $\GG$ can be approximated by \emph{finitely supported  central states}. To this end, recall  that a state $\varphi\in \mathcal{S}(C(\GG))$ is called \emph{central} if
\[
(\varphi \tens \id)\Delta=(\id\tens \varphi)\Delta.
\]
So, the Haar state is always central and so is the  counit, which extends to a state since  $\GG$ is assumed coamenable. For our purposes, the following characterisation of centrality (which might be well-known to experts in the field) will be the most convenient to work with.

\begin{lemma}\label{lem:centrality-in-terms-of-E}
When $\GG$ is of Kac type then $\varphi\in \S(C(\GG))$ is central if and only if $\varphi=\varphi\circ E$, where {$E\colon C(\GG) \to C_z(\GG)$ is the conditional expectation} from Proposition \ref{prop:cond-ext-prop}.
\end{lemma}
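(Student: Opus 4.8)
The plan is to reduce both properties to the values of $\varphi$ on the matrix coefficients $u_{ij}^\alpha$, where they will turn out to encode one and the same constraint. Since $\Pol(\GG)$ is norm-dense in $C(\GG)$ and all the maps in sight --- $\varphi$, $E$, $\Delta$, and the slice maps $\varphi\tens\id$ and $\id\tens\varphi$ --- are bounded, it suffices to verify the identities $(\varphi\tens\id)\Delta=(\id\tens\varphi)\Delta$ and $\varphi=\varphi\circ E$ only on the linear basis $\{u_{ij}^\alpha \mid \alpha\in\Irred(\GG),\ 1\leq i,j\leq d_\alpha\}$ of $\Pol(\GG)$.

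For the functional equation, I would invoke the formula \eqref{eq:E-on-matrix-units} of Proposition \ref{prop:cond-ext-prop}, namely $E(u_{ij}^\alpha)=\delta_{ij}\,d_\alpha^{-1}\chi(u^\alpha)$. Applying $\varphi$ shows that $\varphi=\varphi\circ E$ on $\Pol(\GG)$ is equivalent to the condition
\[
\varphi(u_{ij}^\alpha)=\delta_{ij}\,d_\alpha^{-1}\,\varphi\big(\chi(u^\alpha)\big)\qquad\text{for all }\alpha,\,i,\,j,
\]
which I will refer to as $(\ast)$.

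For centrality, I would substitute $\Delta(u_{ij}^\alpha)=\sum_{k}u_{ik}^\alpha\tens u_{kj}^\alpha$ into the two slice maps, obtaining $(\varphi\tens\id)\Delta(u_{ij}^\alpha)=\sum_{k}\varphi(u_{ik}^\alpha)\,u_{kj}^\alpha$ and $(\id\tens\varphi)\Delta(u_{ij}^\alpha)=\sum_{k}u_{ik}^\alpha\,\varphi(u_{kj}^\alpha)$. The decisive step --- and the only one that needs any care --- is to use that the coefficients $\{u_{mn}^\alpha\}$ are linearly independent for each fixed $\alpha$: comparing the coefficient of a fixed basis vector $u_{mn}^\alpha$ on the two sides gives the scalar relation $\varphi(u_{im}^\alpha)\,\delta_{jn}=\delta_{im}\,\varphi(u_{nj}^\alpha)$ for all indices, and specialising $n=j$ forces $\varphi(u_{im}^\alpha)=\delta_{im}\,\varphi(u_{jj}^\alpha)$. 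Thus the matrix $\big(\varphi(u_{ij}^\alpha)\big)_{ij}$ is a scalar multiple of the identity; summing over the diagonal identifies the scalar as $d_\alpha^{-1}\varphi(\chi(u^\alpha))$, so centrality is again equivalent to $(\ast)$.

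Since both properties reduce to the single condition $(\ast)$, they are equivalent, which proves the lemma. The Kac type hypothesis is used solely to guarantee, via Proposition \ref{prop:cond-ext-prop}, the existence of the conditional expectation $E\colon C(\GG)\to C_z(\GG)$ together with the explicit formula \eqref{eq:E-on-matrix-units}; the centrality half of the argument does not use it. I expect the extraction of the scalar condition from the two slice identities to be the only genuinely delicate point, though it is a routine consequence of linear independence of matrix coefficients.
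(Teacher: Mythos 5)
Your proposal is correct and follows essentially the same route as the paper's proof: both reduce everything to the matrix coefficients $u_{ij}^\alpha$, use their linear independence to show that centrality forces $\varphi$ to vanish off the diagonal and be constant on the diagonal of each corepresentation, and identify this condition with $\varphi=\varphi\circ E$ via the formula $E(u_{ij}^\alpha)=\delta_{ij}d_\alpha^{-1}\chi(u^\alpha)$. Your reorganisation through the single condition $(\ast)$ is a cosmetic streamlining of the paper's two separate implications, not a different argument.
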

\begin{proof}
We first assume that $\varphi$ is central and compute with matrix coefficients as follows:
\begin{align}\label{eq:centrality-eq}
\sum_{k=1}^{d_\alpha} \varphi(u_{ik}^\alpha)u_{kj}^\alpha=(\varphi\tens \id)\Delta(u_{ij}^\alpha)=
(\id \tens \varphi)\Delta(u_{ij}^\alpha)=\sum_{k=1}^{d_\alpha} \varphi(u_{kj}^\alpha)u_{ik}^\alpha
\end{align}
But since $\{u_{ij}^\alpha \mid \alpha \in \Irred(\GG), 1\leq i,j \leq d_\alpha\}$ is a linear basis for $\Pol(\GG)$, this forces $\varphi(u_{ik}^\alpha)=0$ if $k\neq i$ and $\varphi(u_{kj}^\alpha)=0$ if $k\neq j$. Hence, equation \eqref{eq:centrality-eq} reduces to $\varphi(u_{ii}^\alpha)u_{ij}^\alpha=\varphi(u_{jj}^\alpha)u_{ij}^\alpha$ and we conclude that $\varphi(u_{ii}^\alpha)=\varphi(u_{jj}^\alpha)$. In other words, on the matrix $(u^\alpha_{ij})_{i,j=1}^{d_\alpha}$ the central state $\varphi$ vanishes on off-diagonal elements and is constant down the diagonal.  Using the formula \eqref{eq:E-on-matrix-units},  we therefore obtain
\begin{align*}
\varphi\circ E(u_{ij}^\alpha)=\varphi\big(\delta_{i,j}d_\alpha^{-1}\chi(u^\alpha)\big)=\delta_{i,j}d_{\alpha}^{-1}\sum_{k=1}^{d_\alpha}\varphi(u_{kk}^\alpha)=\delta_{i,j} \varphi(u_{ii}^\alpha)=\varphi(u_{ij}^\alpha).
\end{align*}
By linearity and continuity, it now follows that $\varphi\circ E=\varphi$.\\

For the converse, assume now that $\varphi\circ E=\varphi$. Computing with matrix coefficients  (using equation \eqref{eq:E-on-matrix-units} in the second step) now gives
\begin{align*}
(\varphi\tens \id)(\Delta(u_{ij}^\alpha))= \sum_{k=1}^{d_\alpha} \varphi(E(u_{ik}^\alpha))u_{kj}^\alpha
= \sum_{k=1}^{d_\alpha} \sum_{l=1}^{d_\alpha} \varphi(\delta_{i,k}d_\alpha^{-1} (u_{ll}^\alpha))u_{kj}^\alpha
= d_\alpha^{-1} \sum_{l=1}^{d_\alpha} \varphi( u_{ll}^\alpha)u_{ij}^\alpha,
\end{align*}
while slicing on the other leg gives
\begin{align*}
(\id \tens \varphi)(\Delta(u_{ij}^\alpha))= \sum_{k=1}^{d_\alpha} \varphi(E(u_{kj}^\alpha))u_{ik}^\alpha
= \sum_{k=1}^{d_\alpha} \sum_{l=1}^{d_\alpha} \varphi(\delta_{k,j} d_\alpha^{-1} \varphi(u_{ll}^\alpha) )u_{ik}^\alpha
=  d_\alpha^{-1}\sum_{l=1}^{d_\alpha} \varphi( u_{ll}^\alpha )u_{ij}^\alpha.\\
\end{align*}
By linearity, density and continuity we therefore conclude that $\varphi$ is central.
\end{proof}

 The approximation property alluded to in the introduction to the present section now takes the following form:

 \begin{lemma}\label{lem:foelner-states}
 If $\GG$ is a compact, coamenable quantum group of Kac type, then there exists a sequence of  states $(\chi_n)_{n\in \NN}$ on $C(\GG)$ such that
 \begin{enumerate}
 \item Each $\chi_n$ is central and supported  in only finitely many matrix coefficients.
\item The sequence $\chi_n$ converges to  $\epsilon$ in the weak$^*$ topology on $\S(C(\GG))$.
 \end{enumerate}
 \end{lemma}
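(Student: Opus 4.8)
The plan is to build the sequence in two stages: first extract from coamenability a sequence of \emph{finitely supported} states approximating the counit, and then average these through the conditional expectation $E$ of Proposition \ref{prop:cond-ext-prop} to make them central, checking that this averaging destroys neither finite support nor convergence. The genuinely substantive input is the first stage, where coamenability is used; everything afterwards is formal manipulation with $E$.

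For the first stage I would use that coamenability is equivalent to the weak containment of the counit $\epsilon$ in the regular representation of $C(\GG)$ on $L^2(\GG)$ (see \cite{bedos-murphy-tuset}). Since $\epsilon$ is a character, and hence a pure state, this places $\epsilon$ in the weak$^*$-closure of the vector states $a\mapsto \inn{\xi, a\xi}$ with $\xi\in L^2(\GG)$ a unit vector. As finitely supported vectors (those in $\Lambda(\Pol(\GG))$) are dense in $L^2(\GG)$ and a unit vector state depends norm-continuously on its vector, I would approximate and, invoking separability of $L^2(\GG)$ to pass to a sequence, obtain unit vectors $\xi_n\in \Lambda(\Pol(\GG))$ whose vector states $\omega_n(a):=\inn{\xi_n, a\xi_n}$ converge weak$^*$ to $\epsilon$. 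Each $\omega_n$ is then finitely supported: if $\xi_n$ is a combination of vectors $\Lambda(u_{ij}^\alpha)$ with $\alpha$ in a finite set $F_n$, then $\inn{\Lambda(u_{ij}^\alpha), u_{kl}^\beta\Lambda(u_{mn}^\gamma)}\neq 0$ forces $\alpha$ to be a subcorepresentation of $\beta\boxtimes\gamma$, equivalently $\beta$ a subcorepresentation of $\alpha\boxtimes\bar\gamma$ by Frobenius reciprocity; for $\alpha,\gamma\in F_n$ this confines $\beta$ to the finitely many irreducible constituents of the corepresentations $\alpha\boxtimes\bar\gamma$, so $\omega_n$ vanishes on $u_{kl}^\beta$ for all but finitely many $\beta$.

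For the second stage I would set $\chi_n:=\omega_n\circ E$. Being the composition of a state with the unital completely positive map $E$, each $\chi_n$ is a state, and since $E$ is idempotent one has $\chi_n\circ E=\omega_n\circ E\circ E=\omega_n\circ E=\chi_n$, so $\chi_n$ is central by Lemma \ref{lem:centrality-in-terms-of-E}. Formula \eqref{eq:E-on-matrix-units} yields
\[
\chi_n(u_{ij}^\alpha)=\omega_n\big(\delta_{ij}\,d_\alpha^{-1}\chi(u^\alpha)\big)=\delta_{ij}\,d_\alpha^{-1}\sum_{k=1}^{d_\alpha}\omega_n(u_{kk}^\alpha),
\]
which vanishes for all but finitely many $\alpha$ precisely because $\omega_n$ is finitely supported; this establishes claim (1). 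Finally, the counit is central, as $(\epsilon\tens\id)\Delta=(\id\tens\epsilon)\Delta$ by the counit axiom, so $\epsilon=\epsilon\circ E$ again by Lemma \ref{lem:centrality-in-terms-of-E}. Hence for every $a\in C(\GG)$ the element $E(a)$ is fixed and $\chi_n(a)=\omega_n(E(a))\to \epsilon(E(a))=\epsilon(a)$, which is exactly the weak$^*$ convergence in claim (2).

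The hard part is thus the extraction of the finitely supported approximants $\omega_n$ from coamenability; the remaining care is only to verify that $\omega\mapsto\omega\circ E$ preserves finite support (guaranteed by \eqref{eq:E-on-matrix-units}, which involves only the diagonal characters) and preserves the weak$^*$ limit $\epsilon$ (guaranteed by the centrality of $\epsilon$). One could instead organise the first stage around a Følner sequence for the discrete dual $\hat\GG$ in the spirit of \cite{tomatsu-amenable}, but the vector-state formulation above seems the most economical given the machinery already in place.
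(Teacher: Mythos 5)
Your proof is correct, but it takes a genuinely different route from the paper's. The paper constructs the states directly from a F\o lner sequence $F_n\subset \Irred(\GG)$ (whose existence for coamenable Kac quantum groups is quoted from the literature): it forms the multipliers $T_n(u_{ij}^\alpha)=\omega_n(\alpha)u_{ij}^\alpha$ with $\omega_n$ built from fusion coefficients and quantum dimensions, cites an external lemma for the facts that $T_n$ is ucp and $\omega_n\to 1$ pointwise, and sets $\chi_n:=\epsilon\circ T_n$; centrality and finite support are then immediate from the explicit formula $\chi_n(u_{ij}^\alpha)=\omega_n(\alpha)\delta_{ij}$, so no post-hoc centralisation is needed. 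You instead obtain finitely supported approximants abstractly --- coamenability makes $\epsilon$ a character, hence a pure state, on the faithfully represented algebra $C(\GG)$, so by Fell's weak-containment theorem together with Milman's converse to Krein--Milman it lies in the weak$^*$ closure of vector states, and finitely supported unit vectors suffice by density and norm-continuity of $\xi\mapsto\omega_\xi$ --- and then you centralise by composing with the conditional expectation $E$, using the paper's own Proposition \ref{prop:cond-ext-prop}, formula \eqref{eq:E-on-matrix-units} and Lemma \ref{lem:centrality-in-terms-of-E}. Both uses of the Kac assumption are legitimate but enter at different places: in the paper it underwrites the existence of the F\o lner sequence, in your argument it underwrites $E$. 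Your Frobenius reciprocity argument for finite support of the vector states and the verification that $\omega\mapsto\omega\circ E$ preserves finite support and the limit $\epsilon$ (via $\epsilon=\epsilon\circ E$) are all sound. What the paper's approach buys is transparency of the analogy with F\o lner sets for amenable discrete groups (a point the authors make explicitly) and completely explicit approximants; what yours buys is independence from the F\o lner machinery and the external convergence lemma, at the cost of invoking standard but less elementary representation-theoretic facts (weak containment, purity, Milman) and of needing the separability assumption to extract a sequence from a net.
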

 
 This result can be derived in numerous ways from the existing characterisations of coamenability (see e.g.~ \cite{tomatsu-amenable}). Here the result will be derived using Følner sequences, since this approach makes the connection to the corresponding result for discrete groups transparent.

 \begin{proof}
 Denote by $F_n\subset \Irred(\GG)$ a Følner sequence for $\GG$  \cite{DK:L2-of-coamenable}, and define a sequence of functions $\omega_n\colon \Irred(\GG) \to[0,\infty)$ by
 \[
 \omega_n(\gamma):=\sum_{\alpha,\beta\in F_n} \frac{N_{\alpha,\bar{\beta}}^\gamma d_\alpha d_\beta }{d_\gamma\big(\sum_{\xi\in F_n}d_\xi^2 \big)}
 \]
Then \cite[Lemma 5.5]{HWW:Pointwise-convergence} 
 shows that $\lim_{n}\omega_n(\gamma)=1$ for all $\gamma \in \Irred(\GG)$ and that the associated multipliers $T_n\colon \Pol(\GG) \to \Pol(\GG)$ 
 \[
 T_n(u_{ij}^\alpha):=\omega_n(\alpha) u_{ij}^\alpha
 \]
 extend to ucp maps on $L^\infty(\GG)$, and hence also on $C(\GG)$ since $T_n$ preserves $\Pol(\GG)$. Setting $\chi_n:=\epsilon \circ T_n\colon C(\GG) \to \CC $ we therefore obtain a sequence of states satisfying
 \[
 \chi_n(u_{ij}^\alpha)=\epsilon(\omega_n(\alpha) u_{ij}^\alpha)=\omega_n(\alpha)\delta_{ij}.
 \]
 This formula immediately implies 
 that $\chi_n$ is central  and moreover that
 \[
  \chi_n(u_{ij}^\alpha)=\omega_n(\alpha)\delta_{ij} \underset{n\to \infty}{\longrightarrow} \delta_{ij}=\epsilon (u_{ij}^\alpha).
 \]
 By linearity, the sequence of states $\chi_n$ therefore converges to $\epsilon$ pointwise on $\Pol(\GG)$ and by density also on $C(\GG)$. Lastly, to see that $\chi_n$ is only supported in finitely many matrix coefficients, note that $\chi_n(u_{ij}^\gamma)\neq 0$ if and only if $N_{\alpha, \bar{\beta}}^\gamma \neq 0 $ for some $\alpha, \beta\in F_n$. But since $F_n$ is finite, the union of supports of products of the form $\alpha \cdot \bar{\beta}$ with $\alpha,\beta\in F_n$ is also finite,  so when $\gamma$ falls outside this finite set we have $\chi_n(u_{ij}^\gamma)=0$.
 \end{proof}

{
The main reason why the central approximation $\chi_n$ of the counit provided by Lemma \ref{lem:foelner-states} is relevant to us, is contained in the following lemma, which shows that the distance between $\chi_n$ and counit can be measured by their restrictions to the algebra of central functions. }

 \begin{lemma}\label{lem:restriction}
Let $\GG$ be a compact, coamenable quantum group of Kac type  and let $\ell\colon \Irred(\GG) \to [0,\infty)$ be a proper length function.  
For any two central states, $\varphi$ and $\psi$, on $C(\GG)$ it holds that 
 \begin{align*}
 d_\ell(\varphi,\psi)&:=\sup\{|\varphi(a)-\psi(a)|  \mid a\in \Pol(\GG), L_l(a)\leq 1 \} \\
 &= \sup\{|\varphi(a)-\psi(a)| \mid a\in \Pol_z(\GG), L_l(a)\leq 1 \} \\
 &=:d_\ell^z\big(\varphi\hspace{-0.15cm} \restriction_{C_z(\GG)}, \psi\hspace{-0.15cm} \restriction_{C_z(\GG)}\big)
 \end{align*}
 
\begin{proof}
The inequality ``$\geq $'' is trivial. For the opposite, let $a\in \Pol(\GG)$ with $L_\ell(a)\leq 1 $ be given. Then $E(a)\in \Pol_z(\GG)$ by Proposition \ref{prop:cond-ext-prop} and $L_\ell(E(a))\leq L_\ell(a)\leq 1$ by Proposition \ref{prop:cond-exp-contraction}. Moreover, since $\varphi$ and $\psi$ are assumed central, Lemma \ref{lem:centrality-in-terms-of-E} shows that $|\varphi(E(a))-\psi(E(a))|=|\varphi(a)-\psi(a)|$ which implies the desired inequality. 
 \end{proof}
 \end{lemma}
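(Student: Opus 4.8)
The plan is to prove the two inequalities separately, the non-trivial direction being carried out by pushing an arbitrary test element into the central subalgebra via the conditional expectation $E$ and checking that nothing is lost. The inequality ``$\geq$'' is immediate: since $\Pol_z(\GG)\subseteq \Pol(\GG)$, the supremum defining $d_\ell^z$ ranges over a smaller collection of admissible test elements than the one defining $d_\ell$. Moreover, for $a\in \Pol_z(\GG)$ the value $|\varphi(a)-\psi(a)|$ depends only on the restrictions $\varphi\restriction_{C_z(\GG)}$ and $\psi\restriction_{C_z(\GG)}$, which is precisely what licenses the identification of the middle supremum with $d_\ell^z$ of the restricted states.

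For the substantive inequality ``$\leq$'', I would fix an arbitrary $a\in \Pol(\GG)$ with $L_\ell(a)\leq 1$ and combine three previously established facts. First, Proposition \ref{prop:cond-ext-prop} guarantees that $E(a)\in \Pol_z(\GG)$, so $E(a)$ is an admissible test element for $d_\ell^z$. Second, Proposition \ref{prop:cond-exp-contraction} gives the contraction estimate $L_\ell(E(a))\leq L_\ell(a)\leq 1$, so $E(a)$ still lies in the relevant Lipschitz unit ball. Third, since $\varphi$ and $\psi$ are central, Lemma \ref{lem:centrality-in-terms-of-E} yields $\varphi=\varphi\circ E$ and $\psi=\psi\circ E$, whence
\[
|\varphi(a)-\psi(a)|=|\varphi(E(a))-\psi(E(a))|.
\]
Putting these together, every quantity $|\varphi(a)-\psi(a)|$ entering the supremum for $d_\ell$ is matched by a quantity $|\varphi(E(a))-\psi(E(a))|$ entering the supremum for $d_\ell^z$, and taking the supremum over all such $a$ gives $d_\ell(\varphi,\psi)\leq d_\ell^z\big(\varphi\restriction_{C_z(\GG)},\psi\restriction_{C_z(\GG)}\big)$.

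I do not expect a genuine obstacle at this level, precisely because the analytic work has been front-loaded into the three cited results; the present lemma is their clean assembly. The conceptual point to keep in sight is that centrality of the states is \emph{exactly} the hypothesis making the averaging performed by $E$ invisible when evaluated against $\varphi$ or $\psi$ (Lemma \ref{lem:centrality-in-terms-of-E}); without it the third step would fail. The only thing worth double-checking is that the contraction $L_\ell(E(a))\leq L_\ell(a)$ is applied to a \emph{general} element of $\Pol(\GG)$ rather than to a central one, since the whole maneuver begins with an arbitrary test element; this is indeed what Proposition \ref{prop:cond-exp-contraction} provides, resting in turn on the commutation relations of Lemma \ref{lem:commutation-lem}, so no further verification is required.
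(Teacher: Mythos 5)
Your proof is correct and follows essentially the same route as the paper's: the ``$\geq$'' direction by inclusion of test sets, and the ``$\leq$'' direction by pushing an arbitrary Lipschitz-unit-ball element into $\Pol_z(\GG)$ via $E$, using Proposition \ref{prop:cond-ext-prop}, the contraction estimate of Proposition \ref{prop:cond-exp-contraction}, and the identity $\varphi=\varphi\circ E$ from Lemma \ref{lem:centrality-in-terms-of-E}. Nothing is missing; your closing observation that the contraction must hold for general (not just central) elements is exactly the point the cited proposition supplies.
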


 We are now ready to prove the essential estimate needed for the proof of Theorem \ref{introthm:lifting}. 
 For the statement, we first introduce a bit of notation.
 \begin{dfn}\label{def:berezin}
For a compact quantum group $\GG$ and $\chi\in \S(C(\GG))$ we denote by $\beta_\chi\colon C(\GG) \to C(\GG)$ the ucp map defined by $\beta_\chi(a)=(\id \tens \chi)\Delta(a)$.
 \end{dfn}

 \begin{lemma}\label{lem:berezin-approximation}
Let $\GG$ be a compact, coamenable quantum group and let $\ell\colon \Irred(\GG) \to [0,\infty)$ be a proper length function. For any $\chi\in \S(C(\GG))$ and any $a\in C(\GG)$ it holds that $\|a-\beta_\chi(a)\|\leq d_\ell(\epsilon, \chi)\cdot L_\ell(a)$.
 \end{lemma}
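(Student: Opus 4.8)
The plan is to exploit the counit identity $(\id\tens\epsilon)\Delta=\id$, which says precisely that $\beta_\epsilon=\id$, so that the element to be estimated can be rewritten as
\[
a-\beta_\chi(a)=(\id\tens(\epsilon-\chi))\Delta(a).
\]
First I would recall that, by Hahn--Banach duality in the Banach space $C(\GG)$, one has $\|c\|=\sup\{|\psi(c)| \mid \psi\in C(\GG)^*,\ \|\psi\|\leq 1\}$ for every $c$; thus it suffices to bound $|\psi(a-\beta_\chi(a))|$ uniformly over such norm-one $\psi$. It is enough to treat $a\in\Pol(\GG)$, since for $a\notin\Pol(\GG)$ we have $L_\ell(a)=\infty$ and there is nothing to prove.

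The crucial step is a leg-swapping computation. Writing $\Delta(a)=a_{(1)}\tens a_{(2)}$ and using the counit identity once more, for any $\psi\in C(\GG)^*$ one gets
\[
\psi(a)-\psi(\beta_\chi(a))=\psi(a_{(1)})\,(\epsilon-\chi)(a_{(2)})=(\epsilon-\chi)(b),\qquad b:=(\psi\tens\id)\Delta(a).
\]
Since $a\in\Pol(\GG)$ we have $\Delta(a)\in\Pol(\GG)\odot\Pol(\GG)$, so $b\in\Pol(\GG)$ and hence $b$ lies in the domain over which $d_\ell(\epsilon,\chi)$ is defined as a supremum.

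The reason for re-expressing everything through $b=(\psi\tens\id)\Delta(a)$ is that this is exactly the left slice to which the left-invariance estimate of Lemma~\ref{lem:left-inv} applies. Extending $\psi$ by Hahn--Banach to an element of $\BB(L^2(\GG))^*$ of the same norm (which does not change $b$, as $\Delta(a)\in C(\GG)\tens C(\GG)$), Lemma~\ref{lem:left-inv} yields $L_\ell(b)\leq\|\psi\|\,L_\ell(a)\leq L_\ell(a)$. Combining this with the definition of $d_\ell$ gives
\[
|\psi(a-\beta_\chi(a))|=|(\epsilon-\chi)(b)|\leq d_\ell(\epsilon,\chi)\,L_\ell(b)\leq d_\ell(\epsilon,\chi)\,L_\ell(a),
\]
and taking the supremum over all norm-one $\psi$ produces the claimed inequality.

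The one point requiring care is the inequality $|(\epsilon-\chi)(b)|\leq d_\ell(\epsilon,\chi)\,L_\ell(b)$ in the degenerate case $L_\ell(b)=0$. If $d_\ell(\epsilon,\chi)=\infty$ the whole statement is vacuous, so we may assume it finite; but then $\epsilon-\chi$ necessarily vanishes on the $L_\ell$-kernel, since otherwise rescaling an element $b$ with $L_\ell(b)=0$ and $(\epsilon-\chi)(b)\neq 0$ would force $d_\ell(\epsilon,\chi)=\infty$. (Alternatively, properness of $\ell$ shows directly that $L_\ell(b)=0$ forces $b\in\CC\cdot 1$, on which $\epsilon$ and $\chi$ agree.) Apart from this bookkeeping the argument is a short pairing estimate; I expect the only genuinely substantive move to be recognising that the counit identity converts $a-\beta_\chi(a)$ into a left slice to which Lemma~\ref{lem:left-inv} applies, the remaining steps being routine.
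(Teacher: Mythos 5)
Your proof is correct and follows essentially the same route as the paper's: rewrite $a-\beta_\chi(a)=(\id\tens(\epsilon-\chi))\Delta(a)$, pair with a norm-one functional, swap legs to get $(\epsilon-\chi)\bigl((\psi\tens\id)\Delta(a)\bigr)$, and control this by $d_\ell(\epsilon,\chi)\cdot L_\ell(a)$ via Lemma \ref{lem:left-inv} (the paper simply works with $\varphi\in\BB(L^2(\GG))^*$ from the start, making your Hahn--Banach extension step implicit). Your extra bookkeeping for the degenerate case $L_\ell(b)=0$ is a point the paper glosses over, and your resolution of it is correct.
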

 We remark that the estimate in Lemma \ref{lem:berezin-approximation}, as well as its proof, is heavily inspired by the analysis of the quantum metric structure of $q$-deformations, studied in \cite{KaadKyed2022, AKK:Podcon, AKK:PolyApprox}.
 
 \begin{proof}
 Take any $\varphi\in \BB(L^2(\GG))^*$ with $\|\varphi\|=1$. Then 
 \begin{align*}
 |\varphi (a-\beta_\chi(a))|&=|\varphi((\id\tens (\epsilon-\chi))\Delta(a) )|=|(\epsilon-\chi)((\varphi\tens \id)(\Delta(a)))|\\
 &\leq d_{\ell}(\epsilon, \chi)\cdot L_\ell((\varphi\tens \id)(\Delta(a))) \leq  d_{\ell}(\epsilon, \chi)\cdot L_\ell(a),
 \end{align*} 
 where the last inequality follows from Lemma \ref{lem:left-inv}. Since 
 \[
 \| a-\beta_\chi(a)\|=\sup\{ |\varphi(a-\beta_\chi(a))| \mid \varphi\in \BB(L^2(\GG))^*, \|\varphi\|\leq 1 \},
 \]
 the desired estimate now follows.
 \end{proof}
 
 With the above results at our disposal, we are now ready to proceed with the proofs of Theorem \ref{introthm:lifting} and Corollary \ref{cor:equivalent-fusion} from the introduction. 
 
 \begin{proof}[Proof of Theorem \ref{introthm:lifting}]
 We aim to apply Kaad's characterisation of compact quantum metric spaces presented in Theorem \ref{thm:kaad-criterion}.  To meet the criteria set forth in Theorem \ref{thm:kaad-criterion}, it suffices to show that $(C(\GG), L_\ell)$ has finite diameter (in the sense of Definition \ref{def:finite-diameter}) and provide a sequence of finite rank ucp maps $\beta_n\colon C(\GG) \to C(\GG)$, which converges to the identity uniformly on the $L_\ell$-unit ball.   To this end, consider again the sequence of central states, $(\chi_n)_{n\in \NN}$  provided by Lemma \ref{lem:foelner-states}. We  augment the sequence by setting $\chi_0:=h$ and put $\beta_n:=\beta_{\chi_n}$; see Definition \ref{def:berezin}. 
Using the centrality of $\chi_n$ and $\epsilon$ together with Lemma \ref{lem:berezin-approximation} and Lemma \ref{lem:restriction}  we the obtain:
 \begin{align}\label{eq:berezin-approx}
\|a-\beta_n(a)\|\leq d_\ell(\epsilon, \chi_n)\cdot L_\ell(a)= d_\ell^z( 
  \epsilon\hspace{-0.15cm} \restriction_{C_z(\GG)}, \chi_n\hspace{-0.15cm} \restriction_{C_z(\GG)})\cdot L_\ell(a),
 \end{align}
 for all $a\in \Pol(\GG)$.
Since $d_\ell^z$ metrizes the weak* topology on $\S(C_z(\GG))$ by assumption, we have that 
\[
d_\ell^z( 
  \epsilon\hspace{-0.15cm} \restriction_{C_z(\GG)}, \chi_n\hspace{-0.15cm} \restriction_{C_z(\GG)})<\infty  \T{ for all $n\in \NN_0$ and } \lim_{n\to \infty}d_\ell^z( 
  \epsilon\hspace{-0.15cm} \restriction_{C_z(\GG)}, \chi_n\hspace{-0.15cm} \restriction_{C_z(\GG)})=0.
\] 
For $n=0$ we have $\chi_0=h$, so the invariance property of $h$  and \eqref{eq:berezin-approx} therefore yield
\[
{\|a- h(a)1\|=\| a-\beta_0(a)\|\leq d_\ell^z( 
  \epsilon\hspace{-0.15cm} \restriction_{C_z(\GG)}, h\hspace{-0.15cm} \restriction_{C_z(\GG)}))\cdot L_\ell(a)},
\]
and hence  $(C(\GG),L_\ell)$ has finite diameter. Moreover,  since $\beta_n$ is ucp with finite-dimensional image and $\lim_{n\to \infty}d_\ell^z( 
  \epsilon\hspace{-0.15cm} \restriction_{C_z(\GG)}, \chi_n\hspace{-0.15cm} \restriction_{C_z(\GG)})=0$, the conditions in \cite[Theorem 3.1]{Kaad2023} are fulfilled and the proof is complete.
\end{proof}

\begin{proof}[Proof of Corollary \ref{cor:equivalent-fusion}]
The assumed bijection $\alpha$ extends to a unitary $ \ell^2(\Irred(\GG_1)) \simeq \ell^2(\Irred(\GG_2))$ which intertwines the two fusion algebras $F(\GG_1)$ and $F(\GG_2)$ as well as the Dirac operators $\tilde{D}_{\ell_1}$ and $\tilde{D}_{\ell_2}$. Hence, if $(F(\GG_1), \ell^2(\Irred(\GG_1)), \tilde{D}_{\ell_1})$ is a spectral metric space, then so is $(F(\GG_2), \ell^2(\Irred(\GG_2)), \tilde{D}_{\ell_2})$. By Corollary \ref{cor:fusion-vs-central}, this means that $(C_z(\GG_2), L_{\ell_2}$) is a compact quantum metric space and  since $\GG_2$ is assumed coamenable and of Kac type,  Theorem \ref{introthm:lifting}  gives that $(C(\GG_2), L_{\ell_2})$ is a compact quantum metric space as well.
\end{proof}

\begin{remark}
Throughout the present paper, we have fixed the domain of the slip-norm $L_\ell$ to be $\Pol(\GG)$, which can be considered  the minimal natural choice of domain. Our main results also make sense for the maximal choice of domain 
\[
C_{\text{Lip}}(\GG):=\{a\in C(\GG) \mid a(\text{Dom}(D_\ell)) \subseteq \text{Dom}(D_\ell) \ \text{ and } \ [D_\ell, a] \text{ extends boundedly} \}
\]
on which the formula defining $L_\ell$  yields a finite number.  It is possible that the techniques used to prove Theorem \ref{introthm:lifting} and Corollary \ref{cor:equivalent-fusion} can be modified to show the corresponding (and stronger) results for the maximal version of $L_\ell$, but to minimise the technicalities stemming from unbounded operator theory, we shall not pursue this question further.

\end{remark}

 \section{Examples}\label{sec:examples}
 The aim of the present section is to prove Corollary \ref{introcor:examples}, and thereby provide concrete examples of length functions on quantum groups whose associated seminorms give compact quantum metric structures.  Our primary focus will therefore be on compact matrix quantum groups \cite[Section 6.1]{Timmermann-book}, and below we will restrict further to the situation where the fundamental corepresentation is equivalent to its conjugate. In this situation, every irreducible corepresentation $\alpha$ is equivalent to a subrepresentation of some iterated tensor power of the fundamental unitary corepresentation, and setting $\ell(\alpha)$ equal to the first tensor power containing $\alpha$ defines a length function on the quantum group in question. 
  We first prove the following result regarding length functions of this type.

 \begin{lemma}\label{lma:OR-coreps}
 	Let $\GG$ be a  compact matrix quantum group with fundamental unitary corepresentation $v$. Suppose, moreover, that $v$  is equivalent to its conjugate and that the associated length function $\ell\colon \Irred(\GG) \to \nn_0$ is injective. If the set of structure constants $\{N_{\alpha,\beta}^{\gamma}\mid \alpha,\beta,\gamma\in \Irred (\GG) \}$ is bounded, then $(F(\GG), \ell^2(\Irred (\GG), \tilde{D}_{\ell}) $ is a spectral metric space.
 \end{lemma}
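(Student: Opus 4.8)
The plan is to verify Rieffel's criterion (Theorem \ref{thm:Rieffels-criterion}) for the commutator seminorm $L_{\tilde{D}_\ell}$ on $F(\GG)$: I must show that the Lipschitz ball $\{x\in F(\GG)\mid L_{\tilde{D}_\ell}(x)\le 1\}$ has totally bounded image modulo scalars in the $C^*$-completion $B:=\overline{\pi(F(\GG))}\subseteq\BB(\ell^2(\Irred(\GG)))$, which by Lemma \ref{lem:intertwining-lemma} is nothing but $C_z(\GG)$. I first exploit the structural hypotheses. Since $\ell$ is injective, for each $n$ there is at most one $\alpha_n\in\Irred(\GG)$ with $\ell(\alpha_n)=n$, so the graded pieces $A_n\ominus A_{n-1}=\CC\alpha_n$ are at most one-dimensional and $\Irred(\GG)$ has at most linear growth; moreover $\ell(\bar\alpha_n)=\ell(\alpha_n)$ forces $\bar\alpha_n=\alpha_n$, so each $\pi(\alpha_n)$ is self-adjoint. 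In the orthonormal basis $(e_m)=(\alpha_m)$ the matrix entries $\langle e_p,\pi(\alpha_n)e_m\rangle=N_{\alpha_n,\alpha_m}^{\alpha_p}$ vanish unless $|\ell(\alpha_p)-\ell(\alpha_m)|\le n$ (by length-axiom (iii) together with Frobenius reciprocity $N_{\alpha_n,\alpha_m}^{\alpha_p}=N_{\alpha_n,\alpha_p}^{\alpha_m}$) and are bounded by the structure-constant bound $C$; a Schur test then gives the property-(RD)-type estimate $\|\pi(\alpha_n)\|\le (2n+1)C$. Thus $F(\GG)$ sits inside the class of filtered $*$-algebras of \cite{OzawaRieffel2005}, now with one-dimensional graded pieces and linear growth.

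The tractable core of the argument is a decay estimate obtained from the gauge circle action $\gamma_\theta:=\operatorname{Ad}(e^{i\theta\tilde{D}_\ell})$ on $\BB(\ell^2(\Irred(\GG)))$, whose generator is $i[\tilde{D}_\ell,\cdot\,]$ and which multiplies the $j$-th ``diagonal'' (entries with $\ell(\alpha_p)-\ell(\alpha_m)=j$) by $e^{ij\theta}$. Writing $\pi(x)^{(j)}:=\tfrac{1}{2\pi}\int_0^{2\pi}e^{-ij\theta}\gamma_\theta(\pi(x))\,d\theta$ for the degree-$j$ component and using $\bigl([\tilde{D}_\ell,\pi(x)]\bigr)^{(j)}=j\,\pi(x)^{(j)}$ together with the isometry of $\gamma_\theta$, I obtain $\|\pi(x)^{(j)}\|\le L_{\tilde{D}_\ell}(x)/|j|$ for $j\ne 0$; and since $\pi(x)e=x$, the coefficient $c_n$ of $x=\sum_n c_n\alpha_n$ equals $\langle e_n,\pi(x)e\rangle$, whence $|c_n|\le L_{\tilde{D}_\ell}(x)/\ell(\alpha_n)$. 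Averaging $\gamma_\theta$ against the Fejér kernel $F_K$ yields unital completely positive maps $\Sigma_K$ with $\Sigma_K(\pi(x))=\sum_{|j|\le K}\bigl(1-\tfrac{|j|}{K+1}\bigr)\pi(x)^{(j)}$, and the bound $\|\pi(x)-\gamma_\theta(\pi(x))\|\le|\theta|\,L_{\tilde{D}_\ell}(x)$ gives the uniform band-approximation $\|\pi(x)-\Sigma_K(\pi(x))\|\le \varps_K\,L_{\tilde{D}_\ell}(x)$ with $\varps_K=\tfrac{1}{2\pi}\int_{-\pi}^{\pi}F_K(\theta)|\theta|\,d\theta\to 0$.

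The remaining, and genuinely hard, step is to upgrade this uniform band-approximation to total boundedness in $B/\CC$, and I expect it to be the crux of the proof. The difficulty is twofold: the Fejér means $\Sigma_K(\pi(x))$ are bandwidth-$K$ band operators but need \emph{not} lie in $B$ (the gauge action does not preserve $B$), and the space of bandwidth-$K$ operators is infinite-dimensional, so—unlike the trigonometric case on the circle, where the analogous means land in a finite-dimensional space—precompactness is not automatic. Moreover the naive sharp truncation $x\mapsto\sum_{n\le N}c_n\alpha_n$ fails outright, since the decay $|c_n|\le 1/\ell(\alpha_n)$ does not overcome the growth $\|\pi(\alpha_n)\|\sim n$. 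This is exactly where the full strength of the hypotheses enters: the at-most-linear growth coming from injectivity, combined with the bounded structure constants, places $F(\GG)$ in the polynomial-growth regime of \cite{ChristRieffel2017} as well, and one completes the argument by manufacturing—via a F\o{}lner/Jackson-type summability supported in the finite-dimensional $A_K$—completely positive finite-rank maps $\Phi_K\colon B\to\pi(A_K)$ satisfying $\|\pi(x)-\Phi_K(\pi(x))\|\le \varps_K'\,L_{\tilde{D}_\ell}(x)$ with $\varps_K'\to 0$. Feeding these, together with the resulting finite diameter, into the filtered-algebra machinery of \cite{OzawaRieffel2005} (or directly into Kaad's criterion, Theorem \ref{thm:kaad-criterion}) then yields that $(F(\GG),\ell^2(\Irred(\GG)),\tilde{D}_\ell)$ is a spectral metric space. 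Producing these within-$B$, $L$-controlled finite-rank approximations is the step I anticipate to require the most care.
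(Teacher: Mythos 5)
Your proposal stalls exactly where you admit it does, and that gap is fatal as written. The paper's proof needs none of the Fej\'er/gauge-action machinery you set up, because Ozawa and Rieffel's Main Theorem 1.2 in \cite{OzawaRieffel2005} already packages the entire passage from a Haagerup-type inequality to the Lip-norm property: for a filtered $*$-algebra with its natural trace (and the paper checks in Section 3.1 that $F(\GG)$ with the filtration $A_n$ is of this type), it suffices to exhibit a single constant $C>0$ such that
\[
\|P_m a_k P_n\| \le C\, \|a_k\|_2 \qquad \text{for all } a\in F(\GG),\ k,m,n\in\NN_0,
\]
where $P_n$ is the projection onto the span of the irreducibles of length exactly $n$ and $a_k$ is the degree-$k$ component of $a$. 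The irony is that your first paragraph already contains the verification of this condition: injectivity of $\ell$ makes each graded piece one-dimensional, so $a_k=z_k\alpha_k$ and $\|a_k\|_2=|z_k|$, while $P_n\xi=\xi_n\alpha_n$ with $|\xi_n|\le 1$ for any unit vector $\xi$; hence $P_m a_k P_n\xi=z_k\xi_n N_{\alpha_k,\alpha_n}^{\alpha_m}\alpha_m$ and $\|P_m a_k P_n\|\le M\|a_k\|_2$, with $M$ the assumed uniform bound on the structure constants. That computation, followed by a citation of \cite[Main Theorem 1.2]{OzawaRieffel2005}, \emph{is} the paper's entire proof; your Schur-test estimate $\|\pi(\alpha_n)\|\le(2n+1)C$ is correct but never needed.

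By contrast, the step you defer --- manufacturing unital completely positive finite-rank maps $\Phi_K$ with image in $\pi(A_K)$ and $\|\pi(x)-\Phi_K(\pi(x))\|\le\varps_K'\,L_{\tilde{D}_\ell}(x)$ --- is not a routine supplement to your band-truncation estimate; it is precisely the hard content of Ozawa--Rieffel's theorem, whose proof combines the Fej\'er-type truncation you derived with the Haagerup-type condition above to control each band. Your appeal to a ``F\o lner/Jackson-type summability'' and to the ``polynomial-growth regime of \cite{ChristRieffel2017}'' is an assertion, not an argument: \cite{ChristRieffel2017} concerns group algebras of groups of polynomial growth and no transfer of its methods to $F(\GG)$ is given, nor is any construction of the maps $\Phi_K$ carried out. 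So the proposal is genuinely incomplete. The repair, however, is short: discard your second and third paragraphs, keep the matrix-entry bound and the one-dimensionality of the graded pieces, and feed them into \cite[Main Theorem 1.2]{OzawaRieffel2005} as a black box, exactly as the paper does.
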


 
 \begin{proof}
 Denote the image of $\ell$ as $\{n_0,n_1,n_2,\ldots\}\subseteq \NN_0$ (listed in increasing order) and label the elements in  $\Irred(\GG)$ accordingly as $\{\alpha_0, \alpha_1,\alpha_2, \ldots\}$ so that $\ell(\alpha_k)=n_k$ for all $k\in \NN_0$.   For $n\in \NN_0$, denote by $A_n$ the finite-dimensional subspace $\mathrm{span}_\CC \{ \alpha\in \Irred(\GG) \mid \ell(\alpha) \leq n \} \subseteq F(\GG)$. In Section \ref{sec:spec-trip-fusion}, we saw  that the fusion algebra together with its natural trace and the filtration $(A_n)_{n\in \nn_0}$
 falls within the class studied by Ozawa and Rieffel in \cite{OzawaRieffel2005}, and we now wish to use their Main Theorem 1.2 to show our result. Denote by $P_n \in \BB (\ell^2 (\Irred(\GG)))$ the projection onto the subspace spanned by irreducible corepresentations of length exactly $n$, with the convention that $P_n=0$ if there are no such corepresentations.    Decomposing an element $a \in F(\GG)$, accordingly, as a finite sum $\sum_{k=0}^\infty a_k$ as in \cite[Main Theorem 1.2]{OzawaRieffel2005}, we have $a_k=0$ if $k\notin \{n_0,n_1, n_2, \dots\}$ and for $k\in  \{n_0,n_1, n_2, \dots\}$, $a_k$ takes the form $z_{k}\alpha_k$ for some $z_k\in \CC.$  
  To verify the conditions of \cite[Main Theorem 1.2]{OzawaRieffel2005}, we must  provide a constant $C > 0$ such that for all $a \in F(\GG)$ and all $k,m,n \in \NN_0$ it holds that
 	\begin{align}\label{eq:haagerup-type-condition}
 		\Vert P_m a_k P_n \Vert \leq C \Vert a_k \Vert_2.
 	\end{align}
The only non-trivial case is when $k\in  \{n_0,n_1, n_2, \dots\}$ and  $a_k=z_k \alpha_k$ for some $z_k\in \CC\setminus \{0\}$, and for the purpose of proving an inequality of the form \eqref{eq:haagerup-type-condition} we may as well assume $z_k=1$. Picking a unit vector $\xi\in \ell^2(\Irred(\GG))$, we again have that $P_n\xi=0$ if $n\notin \{n_0, n_1, \ldots\}$ and $P_n\xi=\xi_n\alpha_n$ for some $\xi_n\in \CC$ when 
  $n\in \{n_0, n_1, \ldots\}$. In the latter case, we decompose the product, according to the fusion rules,  as
  \[
  \alpha_k\cdot \alpha_n = \sum_{i=0}^\infty N_{\alpha_k, \alpha_n}^{\alpha_i} \alpha_i
  \]
  to obtain
  \[
  P_m a_k P_n \xi= P_m \alpha_k (\xi_n\alpha_n)= P_m \sum_{i=0}^\infty \xi_nN_{\alpha_k, \alpha_n}^{\alpha_i} \alpha_i =\xi_n N_{\alpha_k, \alpha_n}^{\alpha_m} \alpha_m. 
  \]
Denoting by $M$ the assumed uniform upper bound on the structure constants, we therefore have
\[
\|P_m a_k P_n \xi\|_2=\| \xi_n N_{\alpha_k, \alpha_n}^{\alpha_m} \alpha_m\|_2 \leq |\xi_n|\cdot M \cdot \|\alpha_m\|_2\leq M,
\]
which shows that $C=M$ does the job.
 \end{proof}

With the above lemma at our disposal,  Corollary \ref{introcor:examples} now follows easily. 
 
\begin{proof}[Proof of Corollary \ref{introcor:examples}]
First recall that the irreducible (co-)representations of $SU(2)$  
can be labeled as $\{u^n\}_{n \in \NN_0}$, where $u^n$ is the unique irreducible representation in dimension $n+1$.  The representation $u^1$ is therefore the fundamental representation and equivalent to its conjugate, and we therefore obtain a length function $\ell$ as described in the paragraph preceding Corollary \ref{introcor:examples}. Moreover, the fusion rules are given as
 	\begin{align}\label{eq:SU2-fusion}
 		u^k \cdot u^n = u^{|k-n|} + u^{|k-n|+2} + \cdots + u^{k+n},
 	\end{align}
from which it follows that $\ell(u^n)=n$ and  that the structure constants are uniformly bounded by 1. The assumptions in Lemma \ref{lma:OR-coreps}  are therefore satisfied and we conclude that $(F(SU(2)), \ell^2(\Irred(SU(2))), \tilde{D}_\ell)$ is a spectral metric space. Since $SU(2)$ 
is coamenable and of Kac type, an application of Corollary \ref{cor:fusion-vs-central} and Theorem \ref{introthm:lifting} shows that $(C(SU(2)), L_\ell)$ is a compact quantum metric space. \\

Turning to $O_2^+$, by \cite{Banica-On+-rep} its irreducible corepresentations may also be labeled  as $\{u^n\}_{n \in \NN_0}$ with $u^0$ being the trivial corepresentation,  $u^1$ being   the self-conjugate fundamental unitary corepresentation and with fusion rules described by \eqref{eq:SU2-fusion}. The associated length function therefore also satisfies $\ell(u^n)=n$. Since  $O_2^+$ is coamenable and of Kac type and since we have already proved that $(C(SU(2)), L_\ell)$ is a compact quantum metric space, an application of Corollary \ref{cor:equivalent-fusion} yields the corresponding result for $O_2^+$.\\

It now remains to argue that $C(SO(3))$ and $C(S_4^+)$ acquire compact quantum metric structures from their natural length functions, which will follow by arguments very similar to those appearing in the first part of the proof.  Starting with  $SO(3)$, its irreducible corepresentations can also be labeled  $\{ u^n\}_{n \in \NN_0}$, where $u^0$ is the trivial corepresentation and $u^1$ is the fundamental corepresentation, which is again equivalent to its conjugate.  The fusion rules are given by
 	\begin{align*}
 		u^k \cdot u^m= u^{|k-m|} +u^{|k-m|+1} + \cdots +u^{k+m}
 	\end{align*}
and it therefore follows that $\ell(u^n)=n$ and that the structure constants are uniformly bounded by $1$. By Lemma \ref{lma:OR-coreps}, we therefore conclude that $(F(SO(3)), \ell^2(\Irred(SO(3))), \tilde{L}_\ell)$ is a spectral metric space, and combining Corollary \ref{cor:fusion-vs-central} and Theorem \ref{introthm:lifting} we conclude that $(C(SO(3)), L_\ell)$ is a compact quantum metric space.  To finish the proof, we note that it was shown in \cite{banica-sym} that the corepresentation theory of $S_4^+$ and $SO(3)$ are identical (in the same way as was the case for $SU(2)$ and $O_2^+$) and since $S_4^+$ is both coamenable and of Kac type, 
Corollary \ref{cor:equivalent-fusion} implies that also $(C(S_4^+),L_\ell)$ is a compact quantum metric space.	
	%
\end{proof}
\begin{remark}
The way the proof of   Corollary \ref{introcor:examples} is written, the relationship between the different quantum groups is not that transparent. However, since $SU(2)$ is a double cover of $SO(3)$, the representation theory of the latter is contained in that of the former, and it is therefore no surprise that the result for $SO(3)$ follows once we have it for $SU(2)$. As already explained in the introduction, $O_2^+\simeq SU_{-1}(2)$
and $S_4^+\simeq SO_{-1}(3)$,  and since the corepresentation theory is stable under $q$-deformations, an application of our main results therefore yields the compact quantum metric structures for $O_2^+$ and $S_4^+$ as well. 
\end{remark}

\begin{remark}
We end with an aside regarding  finite-dimensional truncations, a subject which has recently attracted a lot of attention \cite{walter-estrada-cp-grps, walter:GH-convergence, walter-connes:truncations, CvS:Truncations, walter-tori, Leimbach:PWT}). Rieffel's recent paper \cite{Rie:truncations} (see also \cite{Leimbach:PWT}) provides a systematic framework for coactions of quantum groups, within which such finite-dimensional approximations can be obtained. The examples studied above are compatible with the main result (Theorem 6.1)  in \cite{Rie:truncations}; indeed, our context is also restricted to the realm of coamenable compact quantum groups, and in  Lemma \ref{lem:left-inv} we have proven that slip-norms arising from length functions are left invariant. Note, in this respect, that the conventions in \cite{Rie:truncations} are different than those in the present paper, in that the \emph{left} coactions of \cite{Rie:truncations}  are what would be called \emph{right} coactions in our setup.  Paraphrasing  \cite[Theorem 6.1]{Rie:truncations} in the context of (our) left coactions, means that the necessary invariance property of the Lip-norm in question is exactly the one covered by Lemma \ref{lem:left-inv}. Hence, for ergodic  coactions of  the examples covered by  Corollary \ref{introcor:examples},  \cite{Rie:truncations} provides Lip-norms on the algebra acted upon (this construction is originally due to Li \cite{Li:ECQ}) together with a natural sequence of finite-dimensional subspaces converging to the total space in the quantum Gromov-Hausdorff distance \cite{Rie:GHD}.

\end{remark}

\bibliographystyle{alpha}
\bibliography{bibliography-CQMS-from-length.bib}

\end{document}